\documentclass{article}
\pdfoutput=1
\usepackage{fullpage}
\usepackage[breaklinks,hidelinks]{hyperref}
\hypersetup{
  pdftitle={Inductive Grounded Arithmetic: Goodstein, the Hydra, and Gamma-0},
  pdfauthor={Bryan Ford}}

\usepackage{amsmath}
\usepackage{amssymb}
\usepackage{amsthm}
\usepackage{booktabs}
\usepackage{array}
\usepackage{longtable}

\newtheorem{thm}{Theorem}[section]

\newtheorem{lem}[thm]{Lemma}

\usepackage[T1]{fontenc}	%

\usepackage{times}
\usepackage{xspace}
\usepackage{cite}		%
\usepackage{bussproofs}
\usepackage{stmaryrd}
\usepackage[noabbrev,capitalize]{cleveref}
\usepackage{thmtools}
\usepackage{xstring}
\usepackage{graphicx}
\usepackage{subcaption}
\usepackage{nicematrix}		%
\usepackage{longtable}
\usepackage{pifont}

\usepackage{isabelle}
\usepackage{isabellesym}
\usepackage{pdfpages}
\usepackage{ragged2e}
\newcommand{\SNIP}[2]{\expandafter\newcommand\csname snippet--#1\endcsname{#2}}
\IfFileExists{snips.tex}{\input{snips}}{}

\newcommand{\GetSnip}[1]{%
    \ifcsname snippet--#1\endcsname%
        \csname snippet--#1\endcsname%
    \else%
        \PackageWarning{snips}{Snippet ``#1'' is undefined.}%
        \emph{Warning: Snippet ``#1'' is undefined.}%
    \fi%
}

\newcommand{\Snippet}[1]{{%
  \newcount\i
  \i=0
  \loop
    \GetSnip{#1-\the\i}%
    \advance \i 1
  \ifcsname snippet--#1-\the\i\endcsname
  \repeat
}}

\newcommand{\SnippetPart}[3]{{%
  \newcount\i
  \i=#1
  \loop
    \ifnum \i=#2
      \renewcommand{\isanewline}{}%
    \fi
    \GetSnip{#3-\the\i}%
    \advance \i 1
    \ifnum \i>#2 {}
    \else \repeat
}}

\hypersetup{breaklinks=true}

\newcommand{\com}[1]{}

\long\def\note#1{}	%

\newcommand{\rga}{RGA\xspace}		%

\com{	%

}%

\makeatletter
\newlength{\doublefracgap}
\DeclareRobustCommand{\doublefrac}[2]{%
  \mathinner{\mathpalette\doublefrac@{{#1}{#2}}}%
}
\newcommand{\doublefrac@}[2]{\doublefrac@@#1#2}
\newcommand{\doublefrac@@}[3]{%
  \ooalign{%
    \raisebox{\doublefracgap}{$\m@th#1\frac{#2}{\phantom{#3}}$}\cr
    \raisebox{-\doublefracgap}{$\m@th#1\frac{\phantom{#2}}{#3}$}\cr
  }%
}
\newcommand{\ddoublefrac}[2]{{\displaystyle\doublefrac{#1}{#2}}}

\makeatother

\newcommand{\irl}[1]{\ensuremath{\mathit{#1}}}		%

\newcommand{\infrule}[3][]{\cfrac{#2}{#3}\IfStrEq{#1}{}{}{\ \irl{#1}}}
\newcommand{\infeqv}[3][]{\ddoublefrac{#2}{#3}\IfStrEq{#1}{}{}{\ \irl{#1}}}
\newcommand{\infceqv}[4][]{\cfrac{#2\qquad}{}\ddoublefrac{#3}{#4}
				\IfStrEq{#1}{}{}{\ \irl{#1}}}

\crefname{thm}{Theorem}{Theorems}
\crefname{cor}{Corollary}{Corollaries}
\crefname{lem}{Lemma}{Lemmas}
\crefname{defi}{Definition}{Definitions}
\crefname{rem}{Remark}{Remarks}

\providecommand{\oga}{OGA\xspace}
\providecommand{\rga}{RGA\xspace}
\providecommand{\iga}{IGA\xspace}
\newcommand{\artifacttag}{\texttt{iga-}\allowbreak\texttt{2026-10-25}\xspace}	%
\newcommand{\cA}{\mathcal A}             %
\newcommand{\fix}{\mathrm{fix}}          %
\newcommand{\Acc}{\mathrm{Acc}}
\newcommand{\WF}{\mathrm{WF}}
\newcommand{\Ord}{\mathrm{Ord}}
\newcommand{\Tr}{\mathrm{Tr}}
\newcommand{\lfp}{\mathrm{lfp}}
\newcommand{\isn}[1]{\textsf{#1}}        %
\newcommand{\ent}{\vdash}
\newcommand{\eps}{\varepsilon_0}
\newcommand{\dotminus}{\mathbin{\dot{-}}}   %

\title{Inductive Grounded Arithmetic: 
	Goodstein, the Hydra, and $\Gamma_0$}
\author{Bryan Ford\\EPFL}
\date{}					%

\begin{document}
\maketitle

\begin{abstract}
What does a recursive definition mean when the logic is grounded, so
that a sentence is true only if a terminating process makes it so?
Kripke's answer for the special case of truth was the least fixed
point.  This paper takes that answer as a rule.  Reflective Grounded
Arithmetic (\rga) already writes any recursive definition as one
self-referential term, paradoxes included.  Inductive Grounded
Arithmetic (\iga) adds the elimination rule for every positive one,
so that accessibility, the winning positions of a game, and grounded
truth itself each come with their own induction principle.  The rule
is sound for the omega model and the system is consistent, both
machine-checked in Isabelle/HOL.  With the rule, \iga proves
Goodstein's theorem outright, in a logic without implication at an
undecided hypothesis, and the Kirby--Paris hydra game for primitive recursive strategies,
both with every step of the arithmetic a grounded derivation.
It proves the Cantor normal forms well-founded, from which the
well-foundedness of the standard $\eps$ notation system in HOL
follows, and it certifies the Veblen notation as an ordinal below
every $\varphi(n,0)$.  It defines Kripke truth for \rga's own language
as one fixed point that agrees with the omega model exactly.  Two equivalences locate what the classical world has and the grounded
one does not.  \iga proves the whole Veblen notation well-founded
exactly when it admits bivalence of accessibility for that order, and
bivalence of accessibility for an order is sound exactly when the
order is well-founded; so $\Gamma_0$ lies at most one instance of
excluded middle away, at the accessibility predicate of the notation,
and that instance, the well-foundedness sentence and one membership
claim in a primitive-recursive form of Kleene's $\mathcal O$ are
interderivable over \iga.  Conditional proof at arbitrary hypotheses, on the other
hand, is inconsistent, by Curry's paradox.  In the semantic sense of
the characterization theorem the one rule makes every positive
definition an inductive definition, and it shows precisely where
implication at a hypothesis that is not provably decided is what has
to be paid for.
\end{abstract}

\section{Introduction}
\label{sec:intro}

Consider a game on the natural numbers.  From a position $x$ a player
may move to any $y$ with $\mathit{move}(x,y)$; the opponent replies
with any $z$ with $\mathit{reply}(y,z)$; and a position is winning
when some move leaves every reply at a winning position.  Written as a
definition,
\[
  W(x) \;\equiv\; \exists y.\ \mathit{move}(x,y) \wedge
                 \forall z.\ \mathit{reply}(y,z) \rightarrow W(z).
\]
Nobody mistakes this for a paradox.  It is the kind of recursive
definition that working mathematicians and programmers write every
day: a predicate defined in terms of itself, meant to pick out the positions that are winning because the game below them is.  A
proof assistant accepts it as an inductive definition, checks that $W$
occurs positively, and hands back an induction principle.  Classical
first-order arithmetic cannot even state it as a single sentence; it
needs a schema, a second-order quantifier, or a truth predicate.

This paper is about what such definitions mean, and what they buy, in
a \emph{grounded} arithmetic: one in which a sentence is true only
when a terminating computation makes it so, and in which a sentence
that no computation settles is not false but has no value.  The
starting point is Reflective Grounded Arithmetic
(\rga)~\cite{rga-paper}, a paracomplete arithmetic, one in which a
sentence may be neither true nor false, descended from Kripke's
theory of truth~\cite{kripke75outline} and grounded
deduction~\cite{ford24reasoning,ford25have}, in which every term is a
computation and self-reference is native: a fixed-point former turns
any body into a term that unfolds to itself.  \rga can write the
definition of $W$ as one term.  It can also write the Liar as one
term, and prove that the Liar has no value.  What \rga cannot do is use the fact that $W$ is the least such predicate.  It can
unfold $W$, but it has no principle that says a property closed under
the game's rules holds at every winning position.

Inductive Grounded Arithmetic (\iga) adds that principle, and nothing
else.  The rule, called ground induction here, is the elimination
rule for a positive self-referential term: from the fixed point at a
point $x$, and a derivation that a motive is closed under one
unfolding, conclude the motive at $x$.  Semantically it says that the
grounded meaning of a positive recursive definition is its least
fixed point, which is Kripke's construction read as a rule.
Classically it is Park's fixed-point induction~\cite{park69fixpoint},
the induction schema of Feferman's theories of inductive
definitions~\cite{feferman70formal,buchholz81iterated}, and Cantini's
minimality principle for Kripke--Feferman
truth~\cite{cantini89notes}.  The thesis of the paper is that the
objects were always there: \rga already writes accessibility,
winning positions, and its own truth predicate as single terms.  What
the rule adds is the ability to reason about them, and the paper
measures what that ability buys and where it stops, with every result
checked in Isabelle/HOL.

\paragraph{Contributions.}
\begin{itemize}
\item \emph{The rule and its semantics} (\S\ref{sec:positive},
  \S\ref{sec:induction}).  A syntactic grammar of positive bodies with parameters, the rule,
  soundness for the omega model (the infinitary reading in which a
  universal is true when all its instances are), consistency, and the
  characterization theorem: the truth set of a positive fixed point is
  the least fixed point of its operator.  A positive self-referential term is an inductive definition, in the sense a proof
  assistant's inductive package gives the words.
\item \emph{Well-foundedness as a sentence} (\S\ref{sec:wf}).
  Accessibility along a relation is one self-referential term, and
  ``$R$ is well-founded'' is one closed sentence, never refutable and
  sound into HOL.  \rga alone proves it for $\omega$ and every finite
  tower; \iga proves it for $\eps$, as a closed order on
  Cantor-normal-form codes, and for every $\varphi(n,0)$ of the Veblen
  notation, and certifies the notations as ordinals on canonical
  codes.  HOL's well-foundedness of the standard $\eps$ notation
  system follows from the internal theorem through soundness.
\item \emph{Goodstein and the hydra} (\S\ref{sec:goodstein}).  \iga
  proves Goodstein's theorem outright, every step of the arithmetic a
  grounded derivation, in a logic that has no implication at an
  undecided hypothesis; and the Kirby--Paris hydra game, for every primitive recursive
  strategy, outright as well, by the same method.
\item \emph{Truth as a fixed point} (\S\ref{sec:truth}).  Kripke truth
  for \rga's own language is one positive fixed point that agrees
  with the omega model exactly, with its induction principle, its
  clause laws and their inversions, and the internal refutation of
  truth at $0=1$.
\item \emph{The price of the classical world} (\S\ref{sec:gamma0}).
  \iga proves the whole Veblen notation well-founded if and only if it
  admits bivalence of accessibility for that order, and bivalence of
  accessibility for an order is sound if and only if the order is
  well-founded.  Whether \iga proves the Veblen notation well-founded is open; the
  commitment itself is sound for that order, so $\Gamma_0$ lies at
  most one instance of excluded middle away, at the accessibility
  predicate of the notation: added to \iga as a rule it gives a sound
  and consistent system that proves it, and over \iga the rule, the
  sentence and one
  membership claim in a primitive-recursive form of Kleene's $\mathcal
  O$, defined inside \iga as a positive fixed point, are
  interderivable.  Conditional proof at arbitrary hypotheses is inconsistent, by
  Curry, so \rga's deduction theorem at decided hypotheses is forced;
  the bivalence schema explodes on the Liar; and the internal
  soundness argument for \rga's consistency, in the form the
  companion paper on G\"odel's theorems in \rga~\cite{ford-conrga}
  analyses, is not available to any omega-sound extension of \rga,
  \iga included.
\end{itemize}

\paragraph{Scope.}
The paper gives no upper bound on \iga's strength; the lower bound is
$\varphi(\omega,0)$, below the ordinals of the classical theories it is
compared with.  Order-type claims are on canonical codes: the internal
orders do not identify two codes of one notation.  Determinacy of the truth term's values inside \iga is reduced
to one closure obligation that is not discharged.  
The identification of the fixed-point truth sets with the $\Pi^1_1$
sets is by citation.  Whether \iga has a certificate-grounded
semantics of its own is open.  Appendix~\ref{sec:table} lists every
theorem with its Isabelle name and its viewpoint.

\paragraph{Reading the metatheory.}
Two kinds of claim appear below, and the paper labels them.  A
\emph{grounded} claim is a derivation in \rga or \iga: something the
system itself has.  A \emph{classical} claim is a HOL theorem about the system, about its derivability relation or about
the omega model: something true of the system from outside, by
classical reasoning.  An equivalence between two derivability
statements is a classical claim, even though each side is grounded.
Soundness connects the two kinds in one direction only.  The
distinction matters most in \S\ref{sec:gamma0}, where the paper's
point is exactly which claims are of which kind.

\paragraph{AI disclosure.}
The machine-checked proofs and the first drafts of this paper's text
were produced with substantial assistance from Claude models from
Anthropic, working under the direction and review of the author
(the artifact paragraph of \S\ref{sec:concl}).  The definitions that a
reader must trust are the rows marked ``def.''\ in
Appendix~\ref{sec:table}, which lists every result with its Isabelle
name; the author reviewed these definitions and theorems.

\paragraph{Road map.}
\S\ref{sec:bg} recalls \rga.  \S\ref{sec:positive} defines positive
self-reference and \S\ref{sec:induction} the rule, with a worked
example.  \S\ref{sec:wf}--\S\ref{sec:truth} are the applications:
ordinals, Goodstein and the hydra, truth.  \S\ref{sec:gamma0} states the limits.  \S\ref{sec:related} places the system among its neighbours
and \S\ref{sec:concl} lists what remains open.  Theorem headers carry the Isabelle names of their main statements.
Appendices~\ref{app:gfix} to \ref{app:curry} give the seven headline
proofs step by step, for a reader who wants to follow or re-mechanize
them, and Appendix~\ref{sec:table} lists every theorem with its name.

\section{Background: Reflective Grounded Arithmetic}
\label{sec:bg}

This section recalls the parts of \rga that the rest of the paper
rests on.  The full system and its semantics are in the \rga
paper~\cite{rga-paper}, and its metatheory of self-reference in the
companion paper on G\"odel's theorems in \rga~\cite{ford-conrga},
called the G\"odel paper below; nothing here is new.  Throughout,
$\bar n$ is the numeral for the number $n$, and $\lceil t \rceil$ is
the numeral that codes the term $t$.

\paragraph{Terms as computations.}
\rga has one syntactic category: terms.  A term is built from
numerals, variables (de~Bruijn indices, so that a bound variable is
named by its distance from its binder), successor, a recursor, a
single binder $\lambda$, application under call-by-name (an argument
is passed unevaluated and computed only when used), equality,
negation, disjunction, and one more constant, the universal
combinator $\cA$.  There are no separate formulas.  A term computes by
head reduction; a closed term may reach a numeral, in which case that
numeral is its value, or it may run forever.  The logical constants are
read on values: $0$ and $1$ serve as the truth values, so an equality
$a = b$ has value $1$ when both sides reach the same numeral and
$0$ when both reach different ones, and no value when either side
diverges.  Negation flips the value; disjunction has value $1$ when
either disjunct does and $0$ when both have value $0$.  A universal
$\cA \cdot u$ has value $1$ when every instance $u \cdot \bar n$ has
value $1$ and value $0$ when some instance has value $0$.  A term with
no value is neither true nor false: it has no value at all.  This is
the paracomplete reading.  The Liar, the term that unfolds to its own
negation, is the standard example of a term with no value.

\paragraph{Judgments and rules.}
A judgment is a sequent $\Gamma \ent c$ with a finite set of
hypothesis terms on the left.  A term is \emph{well-formed} when its
binders and applications are built correctly from the constructors
with de~Bruijn indices in range; the mechanization carries
well-formedness as a side condition, and every term in this paper is
well-formed.  Three judgment forms recur below.
$\Gamma \ent c$ says $c$ is true; $\Gamma \ent c\,\mathbf N$ says $c$
is a number, which is to say it reaches a numeral; and $\Gamma \ent
c\,\mathbf B$ says $c$ is \emph{decided}, which is to say it reaches
$0$ or $1$.  The proof system has forty-two rules, all of them familiar
except for two features.  First, universal introduction is schematic:
to prove $\cA \cdot u$ one proves $u \cdot v$ for a fresh variable
$v$ under the hypothesis that $v$ is a number.  A universal is thus
earned by one argument that covers every instance, never by checking
the instances.  Second, conversion is a rule in both directions: if
$c$ head-reduces to $c'$ then $\Gamma \ent c$ and $\Gamma \ent c'$ are
interderivable.  Primitive recursive functions enter through a
verified compiler.  An \emph{index tree} is a syntax tree naming a
primitive recursive function by its constructors (zero, successor,
projection, composition, primitive recursion), so that every such
function has an explicit, finite description.  For every index tree
$f$ there is a closed term $\mathrm{cmp}\,f$ that computes the
function, and \rga proves the compiled term total.

\paragraph{Self-reference.}
The fixed-point former is a term, not a rule:
$\fix\, f$ is $(\lambda x.\, f (x\,x))(\lambda x.\, f (x\,x))$, and it
head-reduces to $f (\fix f)$ in one step.  The Liar is $\fix(\lambda
x.\, \neg x)$.  \rga proves the Liar neither true nor false, and more:
a well-formed sentence with no value is independent, neither provable
nor refutable~\cite{rga-paper}.  The same former will build every
recursive definition in this paper.

\paragraph{Two semantics.}
\rga has two semantics, and the difference between them is the hinge of the rule's soundness in \S\ref{sec:induction}.  The \emph{certificate-grounded}
semantics is the one in which a universal is true only when every instance is true and \rga has a schematic proof of the open
instance.  Under it, \rga is sound and complete: a sentence is
provable exactly when it is grounded-true, which is the sense in which
what \rga proves is exactly what is true in its own world.  The \emph{omega model} drops the proof: a universal is true when
every instance is true, and nothing else changes.  Formally the omega
model is a pair of mutually inductive relations, ``the term $t$
reaches the head form $w$'' (an abstraction, the recursor with some
of its arguments, or the universal combinator) and ``the term $t$
has the value $r$'', with twenty-one rules, among them the
infinitary rules for the universal; a term has at most one value.  \rga is sound for the
omega model~\cite{rga-paper}.  Soundness for the omega model is a
plain rule induction with no reference to proofs, and it is the
semantics against which this paper proves its new rule sound.  The
two semantics differ in character as well.  The certificate-grounded
one is finitary: every truth has a finite witness, a computation or
a proof.  The omega model is not: a universal is true when all its
instances are, and the sets it makes true include sets far beyond
the computable ones (\S\ref{sec:induction}).  When this paper says
that every \iga term is a computation and every \iga proof a finite
object, that is a fact about the system; the omega model is the
classical yardstick against which the system is measured.

\paragraph{What \rga lacks, exactly.}
Two absences shape everything below.  \rga has no deduction theorem at
an undecided hypothesis: from $a, \Gamma \ent b$ one may conclude
$\Gamma \ent \neg a \vee b$ only when $a$ is decided.  The G\"odel paper proves this exact~\cite{ford-conrga}: the
deduction theorem holds at a hypothesis $a$ for every conclusion if
and only if $a$ is provably decided, and the two-line proof transfers
to every extension of \rga considered here.
\S\ref{sec:classical}, on conditional proof, shows why the restriction cannot be lifted.
And \rga has no induction principle for a self-referential term beyond
unfolding it.  It can prove that the game's winning predicate unfolds
to its definition; it cannot conclude anything about all winning
positions from that.  Supplying that principle is the whole of \iga.

\paragraph{Notation.}
A term with one free index is written as the body of a binder;
$\lambda\, b$ binds it.  A sibling extension of \rga, Omega Grounded
Arithmetic (\oga)~\cite{oga-paper}, adds a rule that makes a universal
over a decided matrix decided; it appears in the ordinal results of \S\ref{sec:wf} and in the related work of \S\ref{sec:related}.  Isabelle names appear as \isn{name}.

\section{Positive Self-Reference}
\label{sec:positive}

A recursive definition has a body with a hole for the thing being
defined.  Ground induction applies only when the hole occurs positively.  The
reason is not monotonicity.  The operator that the rule of \S\ref{sec:induction} attaches to a body reads a recursive atom as true at a member of the
set and as having no value otherwise, so it only ever moves an atom
from ``no value'' to ``true'', and it is monotone whatever the body:
for the Liar body $\neg(a\cdot x)$ it is constantly empty, and its
least fixed point, the empty set, happens to be the term's truth set.
The reason is that only for a positive body does the operator
describe the term: the truth of the body with the fixed point plugged
in then depends on the fixed point's instances only through their
truth, never through their falsity or their having no value, whereas
plugging the term into the Liar body lets an atom be false.  That
dependence is the hinge of the soundness proof.  This section fixes the
grammar.

\paragraph{Bodies.}
A body is a term $\Phi$ with two distinguished free indices: index $1$
is the recursive variable, written $a$ below, and index $0$ is the
point, written $x$.  The fixed point of a body is the closed term
\[
  \fix\,\Phi \;\equiv\; \fix(\lambda a.\, \lambda x.\, \Phi),
\]
and its unfolding at a point is three head steps:
$(\fix\,\Phi)\cdot x$ reduces to $\Phi$ with $\fix\,\Phi$ substituted
for $a$ and $x$ for the point.  By the conversion rule, the fixed
point at $x$ and its one-step unfolding are interderivable.  A body
may have further free indices, its \emph{parameters}: a definition
indexed by a parameter may carry it in the point, as a code, or leave
it free in the body, where the rule of \S\ref{sec:induction} treats it
as a term of the context.  The one restriction on parameters is in the
grammar below.

\paragraph{The grammar.}
Positivity is a mutual inductive predicate on bodies, $\mathit{pos}$
and $\mathit{neg}$, indexed by the recursive variable:
\begin{itemize}
\item a term in which the recursive variable does not occur is both
  positive and negative;
\item $a \cdot u$ is positive when $u$ is a \emph{numeral
  expression} in the bound variables: the point, a variable bound by a
  quantifier inside the body, zero, or the successor of a numeral
  expression; a parameter may not appear in $u$;
\item $\neg b$ is positive when $b$ is negative, and negative when $b$
  is positive;
\item $b \vee c$ is positive when both disjuncts are, and negative
  when both are;
\item $\cA \cdot (\lambda\, b)$ is positive when $b$ is, with the
  indices shifted under the binder, and likewise for negative.
\end{itemize}
Conjunction is defined in \rga as $\neg(\neg b \vee \neg c)$, so the
clauses for it follow.  The grammar is checked by simplification on any concrete body.  At a
closed body the restriction on parameters is empty and the grammar
is the plain one, with ``a variable other than $a$'' in the atom
clause; the development keeps both, and the parametric grammar is the
rule's side condition.  Since the recursive atom may sit under two
negations, this is positivity in the sense of Isabelle's inductive
package, not the strict positivity of Rocq or Agda, which forbids the
recursive variable under any negation or to the left of any
implication.  Strict positivity is not needed here because there is no implication
to be on the left of; a double negation only wraps a disjunction, and
the reading of the body defined below sees through it.  Three examples fix the idea.
The accessibility body of \S\ref{sec:wf}, $\cA \cdot \lambda y.\,
\neg R(y,x) \vee a \cdot y$, is positive.  The winning-position body
of the introduction, with the existential written as a negated
universal, puts $a \cdot z$ under an even number of negations and is positive.  The
Liar's body $\neg a$ is not positive, and neither is the unary Liar's
$\neg (a \cdot x)$: the rule of \S\ref{sec:induction} never applies to
it, which is as it should be.

\paragraph{What the grammar forces.}
Two consequences of the grammar run through every later section.

The first consequence is the restriction to numeral expressions in the argument of
a recursive atom.  One would like to write $a \cdot (f\,u)$ for a
computed code $f\,u$, as a universe of codes or a truth predicate on
codes must.  The grammar does not admit it, for one reason that is an
obstruction and one that is a feature of the present soundness proof.
The obstruction: an arbitrary argument need not have a value, and the
fixed point applied to a divergent term can be true while the closure
premise of the rule speaks only of numerals, so a rule admitting it
would be unsound as it stands.  The feature: even when the argument
reaches a numeral, the soundness proof's induction runs on the omega
model's evaluation, and the step that replaces the argument by its
value builds a fresh evaluation rather than a sub-evaluation, out of
the induction hypothesis's reach.  Whether a different proof removes
the second reason is open.  The grammar therefore admits a computed
argument only by routing it through a quantifier with a decided
equality guard, in either of two forms,
\[
  \cA \cdot \lambda m.\, \neg (m = f\,u) \vee a \cdot m
  \qquad\text{or}\qquad
  \neg\, \cA \cdot \lambda m.\, \neg\bigl( m = f\,u \wedge a \cdot m \bigr) ,
\]
a universal and an existential.  When $f$ is a compiled primitive
recursive function and $u$ a number, the guard $m = f\,u$ is decided,
so the disjunction has a value, and $a$ is applied to the bound
variable $m$.  Every computed argument in this paper is routed this
way; the truth predicate of \S\ref{sec:truth} uses the existential
form.  A parameter is excluded from the argument of a recursive atom
for the first of the two reasons: the rule may be used at any term in
place of the parameter, so after substitution the argument is an
arbitrary term, possibly without a value, and an atom at it is not
something the oracle reading can read.  Routed through the guard, a
parameter is as usable as a computed argument.  The cost is binders: the truth predicate of \S\ref{sec:truth} needs
up to nine of them in a single clause.

The second consequence is what positivity buys.  Read the body with
the recursive atoms interpreted as membership in a set $S$, an atom being true when its argument lies in $S$ and having no value
otherwise: this is the \emph{oracle reading}, used throughout
\S\ref{sec:induction}.  For a positive body, truth under the oracle
reading at the fixed point's truth set coincides with truth of the
body with the fixed point plugged in, because truth of a positive
body never consults the falsity of a recursive atom, and falsity of a
negative body never consults its truth.  An atom under an even number
of negations is true exactly when the recursive term's instance is
true.  This is the only place positivity enters the semantics.

\paragraph{Substituting into a body.}
Two substitutions recur.  Writing a closed term $Q$ for the recursive
variable, $\Phi[Q/a]$, yields a body in the point alone; writing both
$Q$ for $a$ and a term $z$ for the point, $\Phi[Q/a][z/x]$, yields a
closed term when $Q$ and $z$ are closed.  Positivity is stable under
substituting numeral expressions for the other indices and under
lifting, but not under substituting arbitrary terms for them, which is
the same restriction seen from the other side.  The parametric grammar is stable under substituting any term for a
parameter, which is what lets the rule be used at open parameters.

\section{Ground Induction}
\label{sec:induction}

\iga is \rga with one more rule.  This section states the rule, works
one example, proves the rule sound and the system consistent, and then
proves the theorem that gives the rule its meaning: a positive fixed
point is an inductive definition, and its truth set is the least fixed
point of its operator.

\subsection{The rule}

Let $\Phi$ be a well-formed positive body, possibly with free
parameters beyond $a$ and $x$; $c$ a well-formed motive (a term with
the point as its free index); $t$ a term, the point at which the rule
is used; and $i$ a variable fresh for the hypotheses, for the motive,
and for the parameters of the body.  (The mechanization uses
de~Bruijn indices; ``fresh'' means that the index does not occur
free.)  Writing $c[u/x]$ for the motive with $u$ in place of the
point, the rule reads:
\[
  \frac{
    \Gamma \ent (\fix\,\Phi)\cdot t
    \qquad
    i\,\mathbf N,\ \Phi[\lambda c/a][i/x],\ \Gamma \ent c[i/x]
    \qquad
    \Gamma \ent t\,\mathbf N
  }{
    \Gamma \ent c[t/x]
  }
\]  In words: if $t$ lies in the fixed point, and the motive holds at a
fresh point whenever the motive holds throughout one unfolding of the
body at that point, then the motive holds at $t$.  The second premise is the closure condition,
stated as a hypothetical derivation with an eigenvariable, a fresh
variable standing for an arbitrary point: the body with the
motive-as-predicate $\lambda c$ in place of the recursive variable, as
a hypothesis.  Where the body applies $a$ to a term, the hypothesis
applies $\lambda c$ to it, one head step from the motive at that
term.

Three features of the rule matter for a reader who knows inductive
definitions from a proof assistant.  First, no implication appears inside any term.  The closure
condition is a sequent, a hypothetical derivation, and the hypothesis
it carries may have no value, which the logic tolerates in a
hypothesis and would not tolerate in a formula.  A premise may be a sequent; a motive must be
a term.  That asymmetry is the whole of \S\ref{sec:gamma0}, the price of the classical world.  Second,
the rule is the weak form of fixed-point induction, Park's form: the
closure hypothesis offers the motive at the unfolded points, not the
motive together with membership in the fixed point.  The strong form, with the fixed point conjoined to the motive, is derived from the weak one by replacing the motive under the positive body, at the cost of one more premise, that the context is well formed (Appendix~\ref{sec:table}); the development uses the weak form throughout.  Third, the rule is for a
unary predicate on numbers.  A definition indexed by a parameter may
carry it in the point, as a code, or leave it as a free variable of
the body; the only restriction is the grammar's, that a parameter
never sits inside the argument of a recursive atom.  The closed form of the rule, with $\lambda a.\,\lambda x.\,\Phi$
closed, is an instance of the present one, and the applications in
this paper have closed bodies, except the induction along an open
order of \S\ref{sec:wf}.

The side condition that $t$ is a number is not decorative.  Without
it the rule would be unsound: take the body $0=0$, whose fixed point
is true at every argument, a divergent term $\Omega$ as the point,
and the motive $x = x$; the first two premises are derivable and the
conclusion $\Omega = \Omega$ has no value.  (This is checked: any entailment relation that extends \iga's and is closed under the rule without its number premise derives $\Omega = \Omega$, and so is not sound for the omega model; Appendix~\ref{sec:table}.)  Decidedness of anything else is not assumed.

The derived form that the applications use is the induction schema: from $\Gamma \ent (\fix\,\Phi)\cdot x$ and $\Gamma
\ent x\,\mathbf N$, and a closure argument given uniformly over every
context extending $\Gamma$ and every point $z$ (from $\Delta \ent z\,
\mathbf N$ and $\Delta \ent \Phi[\lambda c/a][z/x]$ derive $\Delta
\ent c[z/x]$), conclude $\Gamma \ent c[x/x]$.

\subsection{A worked example: the subtraction game}

Return to the game of the introduction, with the simplest rules: from
$x$ a player may move to $y$ exactly when $y+1 = x$, and the opponent
replies by the same rule.  The winning-position body is
\[
  \neg\, \cA \cdot \lambda y.\, \neg\bigl( \mathit{move}(x,y) \wedge
     \cA \cdot \lambda z.\, \neg\,\mathit{reply}(y,z) \vee a \cdot z \bigr),
\]
the existential written as a negated universal, and it is positive.
Its fixed point $W$ unfolds, by conversion, to the definition.  Note
the asymmetry the grammar imposes: a move must be true, since it sits under conjunction, while a reply need only be not refuted, since it sits as the negated disjunct of a universal.  For
the decided relations here the two coincide.

Two things are provable, of different kinds.  That the odd positions are winning is a grounded claim: a number
induction with unfolding at each step, and it needs nothing from
\iga.  Ground induction enters with the first property of all winning
positions: every provably winning position is provably nonzero.  The
motive is $\neg (x = 0)$.  Its closure under one unfolding: at a point
$i$ with the unfolded body as hypothesis, the equation $i = 0$ is
decided, so split on it; in the case $i = 0$ the hypothesis, a
negated universal, is refuted by unfolding it at $0$, where no $y$
has $y + 1 = 0$; the other case is the motive.  The
rule then gives, from $W \cdot x$ and $x\,\mathbf N$, that $x \ne
0$.  \rga can unfold $W$ at any point, but it has no way to reach a
conclusion about $x$ from $W \cdot x$ for an open $x$.  Finally, that
\iga proves $W \cdot \bar n$ for no even $n$ is a classical claim:
it follows from soundness and the denotation of $W$ in the omega
model, so that \iga proves $W \cdot \bar n$ exactly when $n$ is odd.  The three statements are typical of the
paper: a grounded derivation, a grounded induction, and a classical
theorem about what the system proves.

\subsection{Soundness and consistency}

\iga's proof system restates \rga's forty-two rules verbatim and adds
the one above.  Every \rga theorem is an \iga theorem.

Soundness is for the omega model, the semantics of \S\ref{sec:bg} in
which a universal is true when every instance is.  For the present proof the choice is forced.  Under the
certificate-grounded semantics, a universal's truth
carries a proof of the extended system, so adding a rule changes what
the certificates are and soundness becomes a fixed-point argument
about the system's own proofs.  The omega model has no certificates,
and \rga's soundness for it is a rule induction whose forty-two cases
replay unchanged.  The one new case is the content of the rule.

\begin{lem}[\isn{gfix\_omv\_ind\_set}, \isn{gfix\_omv\_ind}]
\label{lem:gfix-ind}
Let $\Phi$ be positive and $\fix\,\Phi$ closed.  (Set form.)  Let $S$
be a set of numbers closed under the body: whenever $\Phi$ at the
point $\bar n$ is true with its recursive atoms read as membership in
$S$, $n \in S$.  Then every $n$ with $(\fix\,\Phi)\cdot\bar n$ true in
the omega model lies in $S$.  (Term form.)  Let $P$ be any term such
that, for every $n$, if $\Phi[P/a][\bar n/x]$ is true in the omega
model then so is $P \cdot \bar n$.  Then for every $n$, if $(\fix\,
\Phi)\cdot \bar n$ is true in the omega model, so is $P \cdot \bar n$.
\end{lem}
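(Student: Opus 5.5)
The plan is to prove the set form by induction on the omega-model derivation and then get the term form as a corollary. I would generalize the statement so the induction hypothesis can reach inside the body.

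Fix $S$ closed under the body. Write $\llbracket\Phi\rrbracket_S(n)$ for truth of $\Phi$ at $\bar n$ under the oracle reading with atoms read as membership in $S$. The naive statement, ``if $(\fix\,\Phi)\cdot\bar n$ has value $1$ then $n\in S$'', is not directly inductive. The derivation of that value passes through the three head steps to $\Phi[\fix\,\Phi/a][\bar n/x]$, and that derivation is not an instance of the same shape.

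I would therefore prove a mutual statement by rule induction over the two mutually inductive omega-model relations (``reaches head form'' and ``has value''). It is indexed by the positive/negative grammar and quantified over substitution instances of the bound numeral-expression variables. The claim has two halves:
\begin{itemize}
\item if $b$ is positive and $b[\fix\,\Phi/a][\vec m]$ has value $1$ in the omega model, then $b$ is true under the oracle reading at $S$ with $\vec m$;
\item if $b$ is negative and that instance has value $0$, then $b$ is false under the oracle reading.
\end{itemize}
The grammar clauses then dispatch almost all cases.
\begin{itemize}
\item Atom-free subterms: the oracle reading agrees with the term itself, since $a$ does not occur.
\item Negation swaps the two halves.
\item Disjunction splits on which disjunct has value $1$, or uses both disjuncts having value $0$.
\item Universals use the infinitary rule. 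Every instance has value $1$, each by a sub-derivation, to which the hypothesis applies with the bound variable instantiated at a numeral. Dually, for value $0$ some instance has value $0$.
\end{itemize}
The atom case $a\cdot u$ is where the original claim re-enters. Here $u$ is a numeral expression, so after substitution it is literally $\overline{k}$ for a specific $k$; this is exactly why the grammar forbids computed arguments. The instance is $(\fix\,\Phi)\cdot\bar k$. Its value-$1$ derivation unfolds by head reduction to a strict sub-derivation for $\Phi[\fix\,\Phi/a][\bar k/x]$. The hypothesis applied to the top-level positive body gives $\llbracket\Phi\rrbracket_S(k)$, and closure of $S$ gives $k\in S$, which is the atom's oracle truth. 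Only the positive half is needed at atoms, since negative bodies contain no atoms in positive position.

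I expect the main obstacle to be making the induction well-founded in the mechanization. Concretely, I would need to show that the value derivation of a head-reducible term contains the derivation of its reduct as a sub-derivation, without rebuilding one. The plan is to invert the omega model's rules for application and $\fix$ (the $\beta$-step clauses) so that the three head steps are peeled off syntactically. I would also carry the substitution of $\fix\,\Phi$ for $a$ explicitly through the induction, as a term-shape invariant of the form ``the term equals $b[\fix\,\Phi/a][\vec m]$ for some positive $b$'', rather than inducting on $b$. Determinism of values (a term has at most one value) is used to match the value-$0$ cases.

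For the term form, take $S=\{n : P\cdot\bar n \text{ true}\}$. Then apply the same correspondence lemma in the reverse direction, with $P$ in place of $\fix\,\Phi$: oracle truth at this $S$ implies truth of $\Phi[P/a][\bar n/x]$, by the same positivity argument. Hence, by hypothesis, $P\cdot\bar n$ is true, so $S$ is closed under the body and the set form applies. I would prove this direction by structural induction on the positive body, which is routine.
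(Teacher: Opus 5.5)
Your plan follows the paper's proof. It uses one rule induction over the two omega relations, with a motive quantified over positive and negative bodies and their admissible substitution instances. The atom case is closed by inverting the beta rule at $(\fix\,\Phi)\cdot\bar k$ and applying closure of $S$. The term form comes from $S=\{n \mid P\cdot\bar n \text{ true}\}$ through an oracle-to-term lemma; the paper's version, \isn{osem\_term}, is an induction on the oracle reading and needs no positivity.

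The obstacle you expect at the atom largely dissolves. The omega model's beta rule takes a head judgment $\mathrm{hd}(t_0,\lambda b)$ and a value judgment for $b[u]$. The fixed-point steps therefore live in $\mathrm{hd}(\fix\,\Phi,\lambda(\Phi[\fix\,\Phi/a]))$, whose motive can be trivial. Determinacy of head forms (\isn{omh\_det}), not determinacy of values, then identifies the reduct $\Phi[\fix\,\Phi/a][\bar k/x]$ as an immediate premise.

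\textbf{The gap is in the universal case.} The same peeling problem recurs there, and your two-part motive does not cover it.
\begin{itemize}
\item The premises of the infinitary rule for $\cA\cdot u$ are the instance redexes $u\cdot\bar n$ with $u=\lambda(\sigma'\downarrow d)$, not the substituted bodies $(\bar n::\sigma)\downarrow d$.
\item Such a redex is $\sigma''\downarrow\Psi$ for a positive $\Psi$ only when $a$ does not occur in $\Psi$. The grammar has no redex clause, and an atom instance $(\fix\,\Phi)\cdot\bar m$ has an application, not an abstraction, at its head. At a non-occurring $\Psi$ your motive yields only the trivial reading.
\item So in a rule induction the hypothesis gives nothing useful at these premises. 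The value judgment for $(\bar n::\sigma)\downarrow d$ is a premise of a premise, out of the hypothesis's reach.
\end{itemize}

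The paper adds two clauses to the motive for exactly this (\isn{gfRP}, \isn{gfRN}): if the term is the redex $(\lambda(\sigma'\downarrow d))\cdot\bar m$ with value $1$ (resp.\ $0$) and $d$ is positive (resp.\ negative), then $d$ reads true (resp.\ false) at the environment extended by $\bar m$. These clauses are proved in the beta case. There the head form of the abstraction forces $b=\sigma'\downarrow d$ (\isn{omh\_lam\_inv}), so the beta premise is $(\bar m::\sigma)\downarrow d$ and your two halves apply to it. The universal case then reads the new clauses off the hypothesis at each instance.

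Alternatively, you could reify the infinitely branching derivation trees and induct on the strict sub-derivation relation. That also works, but the plain rule induction you name does not without the extra clauses.
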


The set form is a single rule induction on the omega model's
evaluation relation, with a five-part motive, over the oracle reading
of the body (\S\ref{sec:positive}); the term form is the set form at
the truth set of $P$, with one further lemma that turns the oracle
reading at that set into the term reading.  Appendix~\ref{app:gfix}
gives the induction case by case.  For a body with parameters both
forms are proved again with no closedness hypothesis at all: the
closed proof used closedness at exactly one step, the environment of
the unfolding, and the grammar's restriction on parameters is what
that step needs instead.  The oracle reading is also what makes the
characterization theorem below possible.

\begin{thm}[\isn{omega\_sound}, \isn{gi\_consistent}]
\label{thm:sound}
Every \iga derivation $\Gamma \ent c$ from a well-formed context with
omega-true hypotheses has an omega-true conclusion.  In particular \iga does not prove $0=1$.
\end{thm}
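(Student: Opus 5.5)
The proof is a rule induction on the \iga derivation of $\Gamma \ent c$. The motive is: for every closing of the context's free variables by numerals (or omega-valued terms, following \rga's own soundness proof), if every hypothesis is omega-true then so is the conclusion. The three judgment forms are treated alongside. $c\,\mathbf N$ means $c$ has a numeral value in the omega model, and $c\,\mathbf B$ means it has value $0$ or $1$. The forty-two \rga rules are discharged by replaying the existing \rga soundness proof for the omega model~\cite{rga-paper}. That proof is a rule induction whose cases refer only to the omega model's evaluation relation, never to derivability. So each case goes through verbatim once the induction hypothesis is stated for \iga's larger derivability relation. The infinitary rule for $\cA$ handles schematic universal introduction, since the eigenvariable case is instantiated at every numeral. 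Conversion is handled by the determinism and reduction-invariance of the omega model's values.

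The new case is ground induction. Fix an environment making the hypotheses omega-true, and substitute it through the body's parameters, the motive and $t$. By the induction hypothesis on the first and third premises, $(\fix\,\Phi)\cdot t$ is omega-true and $t$ has some numeral value $\bar n$. Take $P = \lambda c$ under the environment and apply the term form of \cref{lem:gfix-ind}; the parametric version needs no closedness. Its premise is: for every $m$, if $\Phi[P/a][\bar m/x]$ is true then so is $P\cdot\bar m$. We get this from the induction hypothesis on the closure premise, instantiating the eigenvariable $i$ at $\bar m$. The hypothesis $i\,\mathbf N$ is then true, the unfolded body is true by assumption, and the hypotheses in $\Gamma$ are unaffected because $i$ is fresh for them, for the motive and for the parameters. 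That yields $c[\bar m/x]$, which is one head step from $P\cdot \bar m$. The lemma then gives $P\cdot\bar n$, hence $c[\bar n/x]$. Finally, $c[t/x]$ is converted to $c[\bar n/x]$ using the fact that $t$ evaluates to $\bar n$. This is the step where the number premise is used, as the $\Omega$ example shows it must be.

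I expect the main obstacle to be exactly this last replacement, together with the interaction between substitution and the environment. The motive is call-by-name, so $c[t/x]$ evaluates $t$ possibly many times inside $c$. I would prove a general omega-model lemma: if $t$ and $t'$ have the same numeral value, then $c[t/x]$ and $c[t'/x]$ have the same values. This goes by induction on the evaluation of $c[t/x]$, in both directions. I would also check that substituting the environment commutes with $\Phi[\lambda c/a][i/x]$ and preserves the parametric grammar, which the paper notes is stable under arbitrary substitution for parameters.

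Consistency follows at once. Apply soundness with the empty context, which is trivially well formed with omega-true hypotheses. A derivation of $0=1$ would make $0=1$ omega-true, that is, give it value $1$. But both sides are numerals that differ, so its unique value is $0$, a contradiction.
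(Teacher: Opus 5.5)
Your proposal is correct and is essentially the paper's proof. It uses the same rule induction with a motive quantified over substitutions; the paper quantifies over all substitutions of well-formed terms in \isn{oomjt}, not only numeral closings. It replays the forty-two \rga cases, and handles the new case by the parametric term form of \cref{lem:gfix-ind} at $P=\lambda c_\sigma$, discharging the closure premise by sending the eigenvariable $i$ to $\bar m$ and using value congruence to replace $t$ by its numeral.

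The one step you leave implicit is that this replacement is needed twice, as in the paper. It is needed once before the lemma, to pass from the truth of $(\fix\,\Phi_\sigma)\cdot t$ to that of $(\fix\,\Phi_\sigma)\cdot\bar n$, which is what the lemma consumes, and once after it, to pass from $c[\bar n/x]$ back to $c[t/x]$; your congruence lemma covers both.
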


\subsection{A positive fixed point is an inductive definition}

For a positive body $\Phi$ define the operator on sets of naturals
\[
  F_\Phi(S) \;=\; \{\, n \mid \text{the oracle reading of } \Phi
  \text{ at } n \text{ with recursive atoms read in } S \text{ is true} \,\}.
\]
As \S\ref{sec:positive} said, the operator is monotone for every
body; positivity is what makes the oracle reading agree with the term
reading.

\begin{thm}[\isn{gfix\_omv\_eq\_lfp}]
\label{thm:lfp}
For every positive body $\Phi$ with $\fix\,\Phi$ closed,
\[
  \{\, n \mid (\fix\,\Phi)\cdot\bar n \text{ is true in the omega model} \,\}
  \;=\; \lfp\,F_\Phi .
\]
\end{thm}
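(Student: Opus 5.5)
Write $T = \{\, n \mid (\fix\,\Phi)\cdot\bar n \text{ is true in the omega model} \,\}$. The plan is to prove the two inclusions separately. Each inclusion uses one of the two facts already available: minimality from the set form of \cref{lem:gfix-ind}, and the fixed-point property from unfolding together with the positivity observation of \S\ref{sec:positive}. Monotonicity of $F_\Phi$ holds for every body, as noted before the statement. By Knaster--Tarski, $\lfp\,F_\Phi$ is therefore the intersection of all $F_\Phi$-closed sets, i.e.\ all $S$ with $F_\Phi(S)\subseteq S$.

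For $T \subseteq \lfp\,F_\Phi$, apply the set form of \cref{lem:gfix-ind} to $S=\lfp\,F_\Phi$. Its hypothesis is exactly $F_\Phi(S)\subseteq S$: whenever the oracle reading of $\Phi$ at $\bar n$ with atoms in $S$ is true, then $n\in S$. This holds because $S$ is a fixed point, hence closed. The lemma then gives $T\subseteq S$. The step is immediate once the lemma is granted.

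For $\lfp\,F_\Phi \subseteq T$, it suffices to show that $T$ is $F_\Phi$-closed, $F_\Phi(T)\subseteq T$; minimality then finishes. Take $n$ with the oracle reading of $\Phi$ at $\bar n$, atoms read in $T$, true. First, transfer this to the term reading: $\Phi[\fix\,\Phi/a][\bar n/x]$ is true in the omega model. Then the conversion step shows $(\fix\,\Phi)\cdot\bar n$ head-reduces in three steps to that term. Since omega-truth is invariant under head reduction (the omega model is defined through head forms), $n\in T$.

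The transfer step is the main obstacle. I would prove a lemma by induction on the positivity grammar, simultaneously for $\mathit{pos}$ and $\mathit{neg}$. The lemma says that for a positive body $\Psi$ (with the point and bound variables instantiated by numerals), oracle-truth at $T$ implies omega-truth of $\Psi[\fix\,\Phi/a]$. Dually, for a negative body, oracle-falsity at $T$ implies omega-falsity.

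The cases run as follows. An atom $a\cdot u$ with $u$ a numeral expression evaluates $u$ to a numeral $\bar m$, where oracle-truth means $m\in T$, which is literally omega-truth of $(\fix\,\Phi)\cdot\bar m$; the atom never arises in negative position, so no falsity case is needed. Negation swaps the two halves. Disjunction is direct from the value clauses. The universal case is by the infinitary rule, using the induction hypothesis at every instance after substituting a numeral for the bound index. A term without $a$ reads the same under both readings.

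The delicate points are twofold. One is keeping substitution of numerals for bound variables compatible with the de~Bruijn shifting. The other is making the numeral-expression argument of an atom reduce to the same numeral in both readings; this is exactly why the grammar forbids computed arguments. This is presumably the same lemma the paper mentions for turning the oracle reading at a truth set into the term reading. If so, I would reuse it in the direction needed rather than reprove it, and then the theorem follows from these two inclusions.
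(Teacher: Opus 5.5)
Your proof is correct and follows the paper's. The inclusion $T \subseteq \lfp\,F_\Phi$ is the set form of \cref{lem:gfix-ind} at $S = \lfp\,F_\Phi$. The inclusion $\lfp\,F_\Phi \subseteq T$ is minimality applied to $F_\Phi(T) \subseteq T$, which you get by transporting the oracle reading at $T$ to the term reading (the paper's \isn{osem\_term}) and folding back to $(\fix\,\Phi)\cdot\bar n$ through the head form of $\fix\,\Phi$.

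The only difference is in how the transport is proved. The paper proves it by induction on the oracle reading itself, which needs neither positivity nor closedness, since atoms never read false and read true only at literal numerals. So your induction on the grammar, and the numeral-expression restriction you invoke there, are not needed in that direction; positivity is used only in the first inclusion.
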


The left-to-right inclusion is the set form of \cref{lem:gfix-ind}
with $S$ the least fixed point.  The right-to-left inclusion is
least-fixed-point induction with the truth set, which needs the
transport in the other direction: from the oracle reading at the
truth set to the term reading, one head step after substituting the
fixed point for the oracle; that direction needs neither positivity
nor closedness, and holds for every body.  So a positive self-referential term in \iga denotes
the least fixed point of its operator, which is what an inductive
definition denotes, and ground induction is its induction principle.
This is a classical claim: it identifies a truth set of the omega
model with a least fixed point in HOL.

Two instances anchor the theorem.  The accessibility term of \S\ref{sec:wf} denotes HOL's accessible
part of the relation it is built from, and the winning-position term
of the example above denotes the inductively defined set of winning
positions, so that for games the term is complete as well as sound.

\paragraph{The class of truth sets.}
It is natural to ask which sets the positive fixed points define.
The answer has a sting.  The truth sets of positive fixed points are
the least fixed points of the operators above, and the same class is
the class of truth sets of arbitrary closed terms, because any closed term $P$ is
the fixed point of the body $P \cdot x$, which never mentions the
recursive variable.  Positivity adds nothing to the class of
definable sets.  What it adds is the induction principle.  The class
is closed under union, intersection, preimage along a definable
function, and bounded and unbounded existential and universal
quantification over a decided relation; it contains every recursively enumerable set and, through
accessibility, the accessible part of every decided relation, which
classically includes a $\Pi^1_1$-complete set.  Classically, Kripke's
least fixed point over arithmetic is
$\Pi^1_1$-complete~\cite{burgess86truth}, and the least fixed points
of positive arithmetic operators are exactly the $\Pi^1_1$ sets, the
sets definable by a universal quantifier over all sets of numbers in
front of an arithmetic formula, by the Kleene--Spector
theorem~\cite{kleene55forms,spector61inductively}.  Since the omega
model is itself the least fixed point of a positive arithmetical
operator, the class above is exactly the $\Pi^1_1$ sets by a classical
argument from the cited theorems; the development does not formalize
that argument, and the open questions of \S\ref{sec:concl} name what a mechanization would take.

\section{Well-Foundedness as a Sentence}
\label{sec:wf}

The first application is the one that makes ordinals available.
Classically, ``the relation $R$ is well-founded'' is a $\Pi^1_1$
statement, one that quantifies over all sets of numbers (every
non-empty set has an $R$-minimal element): no first-order sentence of
arithmetic says it.  In \rga it is one closed sentence, because the accessible part of a
relation is a positive recursive definition and self-reference is
native.

The ordinals that appear below are the standard landmarks, named here
once.  $\omega$ is the order type of the naturals; $\omega^\omega$ is
reached by the finite towers $\omega, \omega^2, \omega^3, \ldots$ of
lexicographic products; $\eps$ is the limit of $\omega, \omega^\omega,
\omega^{\omega^\omega}, \ldots$, and every ordinal below it has a
unique Cantor normal form $\omega^{a_1} + \cdots + \omega^{a_k}$ with
descending exponents, themselves normal forms, which is what makes
$\eps$ a notation system; the Veblen functions $\varphi(a, b)$ extend
it, $\varphi(0, b) = \omega^b$ and $\varphi(a+1, \cdot)$ enumerating
the fixed points of $\varphi(a, \cdot)$, up to $\Gamma_0$, the first
$\gamma$ with $\varphi(\gamma, 0) = \gamma$; and Kleene's $\mathcal
O$ is the set of codes of computable ordinals, in which a code is zero,
a successor of a code, or the limit of a computable sequence of codes.

\subsection{Accessibility and the certificate}

Let $R$ be a closed term computing a binary relation on numbers.  The
accessibility term is the fixed point of the body
\[
  \Acc_R \;\equiv\; \fix\,\bigl(\lambda a.\,\lambda x.\,
     \cA \cdot \lambda y.\, \neg R(y,x) \vee a \cdot y\bigr),
\]
so that $\Acc_R \cdot x$ unfolds to ``every $R$-predecessor of $x$ is
accessible''.  Well-foundedness of $R$ on a domain $D$ is the sentence
\[
  \WF(D,R) \;\equiv\; \cA \cdot \lambda x.\, \neg D(x) \vee \Acc_R\cdot x ,
\]
and the full certificate that $(D,R)$ behaves as an ordinal,
$\Ord(D,R)$, conjoins decidedness of $D$ and of $R$ on $D$,
irreflexivity, transitivity, trichotomy on $D$, and $\WF(D,R)$.
These are terms of \rga; nothing from \iga is needed to write them.
Throughout this section $R$ and $D$ are closed and well-formed, which
every theorem assumes.

Three facts about the accessibility sentence are classical claims,
theorems of HOL about the omega model rather than derivations in the
system, and they set the terms of everything later.

\begin{thm}[\isn{acc\_omv\_iff}, \isn{acc\_omv\_never\_false}]
\label{thm:acc-sem}
For every $n$, $\Acc_R\cdot\bar n$ is true in the omega model exactly
when $n$ lies in HOL's accessible part of the relation $\{(y,x) \mid
R(y,x) \text{ is not false}\}$; and $\Acc_R \cdot t$ is never false in
the omega model, for any term $t$.
\end{thm}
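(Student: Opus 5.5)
The plan is to obtain the first half as an instance of \cref{thm:lfp} and to prove the second half by a direct induction on the omega model's evaluation. Write $\Phi_R$ for the accessibility body $\cA \cdot \lambda y.\, \neg R(y,x) \vee a\cdot y$. It is positive by the grammar of \S\ref{sec:positive}: the atom $a\cdot y$ has the bound variable as its argument, the other disjunct does not mention $a$, and the universal preserves positivity. Since $R$ is closed, $\Acc_R$ is closed. \cref{thm:lfp} therefore identifies the truth set of $\Acc_R\cdot\bar n$ with $\lfp\,F_{\Phi_R}$.

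It remains to compute the operator. Under the oracle reading, the universal is true at $n$ exactly when, for every $m$, the disjunction $\neg R(\bar m,\bar n) \vee [m\in S]$ is true. A disjunction is true exactly when one disjunct is, and the oracle atom is true exactly when $m\in S$. So the condition is: for every $m$, either $R(\bar m,\bar n)$ is false in the omega model or $m \in S$. Equivalently, every $m$ with $R(m,n)$ \emph{not false} lies in $S$. This is precisely the one-step operator whose least fixed point is HOL's inductively defined accessible part of the relation $\{(y,x)\mid R(y,x)\text{ is not false}\}$. The two least fixed points coincide because the operators are equal, so this direction reduces to unfolding definitions and one extensionality step.

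For the second half, suppose $\Acc_R\cdot t$ had value $0$. I would prove, by rule induction on the omega model's value relation, a motive strong enough to cover every term that reduces to an unfolding of $\Acc_R$. The statement is: no term that head-reduces to $\Acc_R\cdot s$ (or to its unfolding $\cA\cdot\lambda y.\,\neg R(y,s)\vee \Acc_R\cdot y$) has value $0$. Value $0$ for the universal requires some instance $\neg R(\bar m,s)\vee \Acc_R\cdot\bar m$ with value $0$. That in turn requires both $R(\bar m, s)$ to have value $1$ and $\Acc_R\cdot\bar m$ to have value $0$. The latter is a strictly smaller derivation, which contradicts the induction hypothesis. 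Note that $t$ need not be a numeral: the point only enters through $R$, and call-by-name passes it unevaluated, so the argument is uniform in $t$.

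I expect the main obstacle to be the bookkeeping in this induction. Value judgments arise through conversion steps: the three head steps of the fixed-point unfolding, and the step through the $\lambda$ of the universal's instance. The motive must therefore be closed under head reduction, or phrased via the paired ``reaches head form'' relation, so that every rule that could produce a $0$ for such a term is covered. I would handle this as in Appendix~\ref{app:gfix}, with a multi-part motive over the mutual relations, stating the claim for every term whose reduct is an instance of the unfolded body.
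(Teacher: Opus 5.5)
Your proposal is correct and follows the paper's route. For the first half, the paper instantiates \cref{thm:lfp} at the accessibility body (\isn{acc\_omv\_eq\_lfp}) and then identifies that least fixed point with HOL's accessible part of the not-refuted relation (\isn{acc\_lfp\_eq}); its two inclusions, one from the set form of \cref{lem:gfix-ind} and one by an accessible-part induction splitting on whether $R\cdot\bar y\cdot\bar x$ is omega-false, are the two directions of your operator identity. For the second half, your rule induction on the omega model, with a motive covering the fixed point at an arbitrary point, its unfolding, the instance redex and the instance disjunction, is a precise version of the paper's ``no base case for falsity'' argument.
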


The relation that appears is the not-refuted one, not the true one.
Where $R(y,x)$ has no value, the disjunct $\neg R(y,x)$ has none
either, and the universal can be true only if $\Acc_R \cdot y$ is; so
an undecided pair counts as a predecessor.  For a decided $R$ the two
relations coincide.  Never-false is the fact with consequences.
Falsity of accessibility at $x$ would need a refuted predecessor, and
there is no base case for falsity.  Hence $\WF(D,R)$ is never
refutable, in \rga and so in \iga: asserting it is unfalsifiable, and sound to adopt exactly when it is
true.  This is the property that will make
bivalence of accessibility a commitment rather than an axiom in \S\ref{sec:gamma0}, on the price of the classical world.

\begin{thm}[\isn{gi\_wf\_sound}, \isn{gi\_wf\_sound\_true}]
\label{thm:wf-sound}
If \iga proves $\WF(D,R)$, then the not-refuted relation of $R$,
restricted to the points where $D$ is true, is well-founded in HOL,
and so is the true relation.
\end{thm}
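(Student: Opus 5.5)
The plan is to pass from the internal derivation to HOL through omega soundness and then read off the truth set of the accessibility term. First, \cref{thm:sound} applies with the empty context: \iga proves $\WF(D,R)$, so the sentence $\cA \cdot \lambda x.\, \neg D(x) \vee \Acc_R\cdot x$ is true in the omega model. Unfolding the omega model's rule for the universal, every instance at $\bar n$ is true. By the disjunction and negation rules, each instance has either $D(\bar n)$ false or $\Acc_R\cdot\bar n$ true. So at every $n$ where $D(\bar n)$ is true, the first disjunct is false. Since a term has at most one value in the omega model, $D(\bar n)$ cannot be both true and false, and $\Acc_R\cdot\bar n$ must be true there.

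Next, \cref{thm:acc-sem} turns this into membership: every such $n$ lies in HOL's accessible part of the not-refuted relation $R^{\neg f} = \{(y,x) \mid R(y,x) \text{ is not false}\}$. Call the set of points where $D$ is true $D^+$. The first claim then needs one HOL lemma. If every element of $D^+$ is accessible for $R^{\neg f}$, then the restriction $R^{\neg f}\cap (D^+\times D^+)$ is well-founded. The reason is that accessibility for a relation is inherited by any subrelation, since a predecessor in the subrelation is a predecessor in the larger one, and a relation all of whose field points are accessible is well-founded. I would prove this by induction on the accessible part, showing $\mathrm{acc}\,R \subseteq \mathrm{acc}\,R'$ for $R'\subseteq R$. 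The remaining points of $\mathbb N$ are not in the field of the restricted relation and are trivially accessible for it.

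For the second claim, the true relation $\{(y,x)\mid R(y,x)\text{ is true}\}$ is contained in the not-refuted one, again because a term has at most one value. Its restriction to $D^+$ is therefore a subrelation of a well-founded relation, and well-foundedness is monotone under subrelations in HOL.

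I expect the main obstacle to be the bookkeeping of omega-model inversions. One has to derive from the truth of a universal the truth of each instance $(\lambda x.\,\ldots)\cdot\bar n$ and then, by head reduction, of the substituted disjunction. That step needs the omega model's evaluation to be invariant under head reduction. It also needs the de~Bruijn substitution of $\bar n$ into $\neg D(x)\vee\Acc_R\cdot x$ to produce exactly $\neg D(\bar n)\vee \Acc_R\cdot\bar n$, which holds because $D$ and $R$ are closed. I would isolate these as inversion lemmas on the omega relations before assembling the argument above.
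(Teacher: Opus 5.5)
Your proposal is correct and follows the route the paper indicates for this bridge. You apply omega soundness (\cref{thm:sound}) at the empty context, then read the universal instance by instance, using uniqueness of omega values, to get $\Acc_R\cdot\bar n$ true wherever $D$ is true. \cref{thm:acc-sem} then places those points in HOL's accessible part of the not-refuted relation, and monotonicity of accessibility and well-foundedness under subrelations finishes both claims, the true relation lying inside the not-refuted one because a term has at most one value.
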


The converse fails, as it must: a true well-foundedness sentence need
not be provable.  The gap between the two is the subject of \S\ref{sec:gamma0}, the price of the classical world.

\subsection{What \rga alone can prove}

Proving $\WF(D,R)$ needs induction along $R$, and \rga has only
induction on the natural numbers.  The order $<$ on the naturals goes
through: the motive ``every $y \le n$ is accessible'' is proved by
number induction, unfolding accessibility at $n+1$ and splitting the
decided guard $y \le n$ . The full certificate follows, with irreflexivity, transitivity, and trichotomy each by
number induction, and the order term denotes HOL's $<$.  As a sanity
check on the soundness bridge, HOL's well-foundedness of $<$ follows
from the \rga theorem.

Lexicographic products go through by nesting, and so does every finite tower $\omega^{k+1}$, the $k$-fold
lexicographic product of the naturals.  The mechanism is a
HOL-level schema, relative transfinite induction: ``\rga proves
induction along $R$ for every motive.''  It is closed under
lexicographic products, and it gives well-foundedness for each tower.
It stops at $\omega^\omega$, and the obstruction is precise: the
induction step at the limit must use the hypothesis ``the tower at
height $h$ is well-founded'', for an open $h$, as an induction
principle, and \rga can only unfold accessibility one step.  The
schema holds for each closed height; what is missing is the ability
to use it inside the system.  That ability is ground induction.

\subsection{What \iga proves}

With the rule, relative transfinite induction becomes a theorem of the
system rather than a schema about it: from a proved $\WF(D,R)$,
induction along $R$ on $D$ for any motive.  Since the rule takes
parameters, the order may itself be a variable of the context: from
the hypothesis that an open order term is well-founded, induction
along it for any motive.  This is what lets a statement about all
well-orders be one hypothetical derivation at eigenvariables rather
than a schema; in the union of \iga with \oga, for instance, the
comparability of two well-orderings is one such sentence.  The orders
below are built over pairs of numbers, and the theorems are
parametric in two closed, well-formed, provably total terms $\mathit{hi}$
and $\mathit{lo}$ projecting a pair to its components; the development
instantiates them with the compiled Cantor projections.  Then:

\begin{thm}[\isn{gi\_WF\_omega\_omega}, \isn{gi\_WF\_cnf}]
\label{thm:iga-wf}
\iga proves $\WF$ for the order on pairs $\langle \mathrm{level},
\mathrm{code}\rangle$ that stopped \rga: a pair is below another when
its level is smaller, or when the levels are equal and its code is
below in the tower order of that level, the family of tower orders
itself one self-referential term.  And it proves $\WF$ for a closed
self-referential order on Cantor-normal-form codes.  Between the two
it proves the transfer recipe: from the hypothesis $\WF(R)$ on a
domain, the sentence $\WF(\omega^R)$ for the order on finite
descending sequences over $R$, as a hypothetical derivation.
\end{thm}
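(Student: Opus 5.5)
The plan has one engine: \emph{relative transfinite induction} as an \iga theorem. Ground induction is applied at the body of $\Acc_R$, and the motive is chosen so that the closure premise reduces to the hypothesis one gets by unfolding accessibility. Concretely, to show that every accessible point has a property $c$, use the motive $c$ itself. The closure hypothesis $\cA\cdot\lambda y.\,\neg R(y,i)\vee c[y/x]$ is then exactly ``$c$ holds at every predecessor''. Where the statement's induction produces accessibility, instead use the motive $\Acc_{R'}\cdot x$ for the new order $R'$. All three parts of the theorem are instances of one pattern. We fix an ordering of the objects by a measure we already know to be well-founded, by induction on that measure prove accessibility in the new order, and close off with universal introduction to get the sentence.

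\emph{The transfer recipe} comes first, since the other two parts use it. Assume $\WF(D,R)$ as a context hypothesis, with $R$ treated as a parameter of the body; the parametric grammar allows this because $R$ never sits inside a recursive atom. Write the lexicographic order on finite descending sequences as $R^\omega$. We prove that every descending sequence whose head $h$ is $R$-accessible, and whose tail is $R^\omega$-accessible, is itself $R^\omega$-accessible. This is the usual Gentzen-style argument, by an outer ground induction on $\Acc_R\cdot h$ and an inner one on the tail's $R^\omega$-accessibility. The outer motive is ``for every $s$ with $\Acc_{R^\omega}\cdot s$, the sequence $h::s$ is accessible''. That motive contains the positive atom $\Acc_{R^\omega}$ only through a guarded universal, so it is a legitimate term. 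The empty sequence is accessible vacuously. Induction on length then gives accessibility of every descending sequence: number induction in \rga suffices here, once the head lemma is available.

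\emph{Cantor normal forms.} Take the closed self-referential CNF order and set up a height function to stratify the codes. Proceed by number induction on the exponent-nesting height $k$, showing that the restriction to height $\le k$ is well-founded. At $k+1$, a code is a descending sequence of exponents of height $\le k$, so the transfer recipe applies, with a comparison lemma identifying the CNF order on that stratum with the lexicographic sequence order over the height-$k$ order. Finally, every code has some height, which gives $\WF$.

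\emph{The $\omega^\omega$ pair order.} Here the point is that the induction hypothesis ``level $h$ is well-founded'' is used at an open $h$. We do this by ground induction on the level order, using the \rga result for $<$ together with the tower schema made internal. For each level $\ell$, $\WF$ of the tower order at $\ell$ follows from the transfer recipe applied at $\ell-1$, by number induction on $\ell$. Now take a pair $\langle\ell,c\rangle$. Do an outer induction on $\ell$ along $<$ and an inner ground induction on $c$ in the level-$\ell$ tower order. Together these give accessibility in the lexicographic pair order, and the projections $\mathit{hi},\mathit{lo}$ enter only through their provable totality.

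I expect the main obstacle to be positivity bookkeeping. The nested motives mention $\Acc$ of another order, and computed arguments such as $h::s$ or $\mathit{hi}(p)$ must be routed through the guarded quantifier $\cA\cdot\lambda m.\,\neg(m=f\,u)\vee a\cdot m$. On top of that, the inner induction needs the decidedness of the order on the domain to case-split on predecessors, and that decidedness has to be available under the closure hypothesis.
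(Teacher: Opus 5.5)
Your transfer recipe has a genuine gap. Your CNF and $\omega^\omega$ parts both run through it, so the gap carries over to them.

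\textbf{The motive.} Your outer motive, ``for every $s$ with $\Acc_{R^\omega}\cdot s$, the sequence $h::s$ is accessible'', can only be the term $\cA\cdot\lambda s.\,\neg(\Acc_{R^\omega}\cdot s)\vee\Acc_{R^\omega}\cdot(h::s)$. Its being a legitimate term is not the issue: motives need not be positive at all, since positivity and guard-routing constrain bodies, not motives. The issue is that the closure step must \emph{derive} this term, that is, derive $\neg(\Acc\cdot s)\vee\Acc\cdot(h::s)$ at an eigenvariable $s$.
\begin{itemize}
\item Your inner ground induction needs $\Acc\cdot s$ as its first premise. So it yields $\Acc\cdot(h::s)$ only with $\Acc\cdot s$ as a hypothesis.
\item Turning that hypothesis into the negated disjunct is conditional proof at an undecided hypothesis. \iga has the deduction theorem only at provably decided hypotheses.
\item Accessibility is never false, so $\Acc\cdot s$ can be decided only where it is true. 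A proof that it is decided at an open $s$ would already contain the conclusion you are after.
\end{itemize}
What you need here is bivalence of accessibility, now for the sequence order. That is the commitment \S\ref{sec:gamma0} identifies as the price of $\Gamma_0$. Compare $\mathrm{RCall}$ in the Veblen proof (Appendix~\ref{app:gamma0}), whose single use of $\mathrm{AccBiv}$ is exactly this step.

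\textbf{A second hole.} A predecessor $h'::s'$ with $h'$ below $h$ has an arbitrary tail $s'$. Your outer hypothesis at $h'$ is unusable until you know $\Acc\cdot s'$. Your plan produces that only through the length induction you placed outside the head lemma.

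\textbf{The fix.} Both holes close if you drop the accessibility antecedent and nest the length induction inside, so that every guard in every motive is decided.
\begin{itemize}
\item Do ground induction on $\Acc_R\cdot h$, which $\WF(D,R)$ supplies, with motive $Q(h)$: ``every valid sequence with head $h$ is accessible''.
\item In its closure, a strong induction on length derives accessibility of the tail. Either the tail's head is below $h$ and $Q$ applies, or it equals $h$ and the length hypothesis applies, or the tail is empty.
\item Then a ground induction on that derived accessibility, with motive $J(z)$: ``every valid sequence with head $h$ and tail $z$ is accessible'', handles the predecessors.
\end{itemize}
This is the shape of the paper's nest (rank, head, rank, tail; Appendix~\ref{app:cnf}). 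There, the outermost rank induction supplies the head's accessibility, which $\WF(D,R)$ supplies for you.

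\textbf{The paper's CNF route.} The paper runs this nest directly on Cantor-normal-form codes, with rank in place of length and the code's own head and tail. That spares you the height function and the comparison lemma. It also spares you transferring accessibility from the restricted stratum orders back to the full order, which needs a downward-closure or pullback lemma your sketch does not mention.

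\textbf{The $\omega^\omega$ part.} There the tower order at a level is an iterated lexicographic product of the naturals, not the sequence order over the previous level. So the step of your number induction should be lexicographic-product closure, not the transfer recipe. That step is unproblematic: $\WF$ of both factors makes every component accessible outright, so its motives need only decided guards.
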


The first order is the union of the towers, of type $\omega^\omega$
when the projections are a bijection; that type is not
machine-checked, and the theorem is stated on the order as a term.  The second order is the one whose type is established in \S\ref{sec:cnf} below, where the notation is certified.  The transfer recipe deserves a
word.  Classically ``$R$ well-founded implies $\omega^R$
well-founded'' is an implication between two $\Pi^1_1$ statements.
Here it is a derivation from a hypothesis, which is the only form
available in a paracomplete logic, and it is also the natural one: a
recipe that turns one certificate into another.  The shape recurs wherever a motive would need implication at an
antecedent that is not provably decided: the relative object replaces
the implication, and the $\mathcal O$ term of \S\ref{sec:one-rule}
uses the same device.  The
Cantor-normal-form result is a nest of four inductions: a strong
induction on rank outermost, then ground induction on the
accessibility of the head exponent, a second strong induction on
rank, and a ground induction on the accessibility of the tail for the
jump (Appendix~\ref{app:cnf}).  Every case split is on a decided matrix, and both
the jump and the limit are eliminations in sequent form.  The cost one might expect a paracomplete logic to charge at the
limit, where \rga stopped for lack of an induction principle, is not
charged here.

\subsection{The notation certified, and HOL's $\eps$ as a corollary}
\label{sec:cnf}

The order of \cref{thm:iga-wf} is a term over codes.  To say what it is the order of, the development defines the standard notation
system in HOL, the datatype of Cantor normal forms $\omega^a + b$ with
its validity predicate and strict order, a decoding of codes into it,
and the canonical code of each normal form.  The internal order is sound on all codes and exact on canonical
codes, and every valid normal form has a canonical code; a code
carries its rank explicitly, and the rank is the measure every
induction of this section runs on.  The internal order does not identify two codes of one normal form: they are
incomparable, so trichotomy fails on codes, and the order on all
codes is a well-founded strict partial order whose restriction to
canonical codes is $\eps$.  Every order-type claim in this paper is
of this kind: on canonical codes.  With that stated, the corollary
that the paper is after:

\begin{thm}[\isn{epsilon0\_wf}]
\label{thm:eps-wf}
The strict order of the standard $\eps$ notation system, on valid
normal forms, is well-founded in HOL.
\end{thm}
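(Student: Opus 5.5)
The plan is to obtain HOL well-foundedness by transport rather than by a direct HOL induction. Three ingredients are used: the internal theorem $\WF$ for the closed Cantor-normal-form order of \cref{thm:iga-wf}, the soundness bridge of \cref{thm:wf-sound}, and the certification facts of \S\ref{sec:cnf} relating codes to normal forms. Write $D$ for the code domain and $R$ for the internal order. Applying \cref{thm:wf-sound} to the \iga derivation of $\WF(D,R)$ yields that the true relation $\{(y,x) \mid R(y,x)\text{ true}, D(x), D(y)\}$ is well-founded in HOL. Since the internal order on codes is built from compiled primitive recursive pieces and decided guards, the not-refuted and true relations coincide on $D$, but the true relation already suffices.

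Next I pull this back along the canonical-code map. Let $\mathrm{code}$ send each valid normal form $\alpha$ to its canonical code; by \S\ref{sec:cnf} every valid normal form has one, and $\mathrm{code}\,\alpha$ lies in $D$. Exactness of the internal order on canonical codes gives the required link: for valid $\alpha,\beta$ with $\beta<\alpha$ in the HOL order, $R(\mathrm{code}\,\beta,\mathrm{code}\,\alpha)$ is true in the omega model. The standard HOL lemma that the inverse image of a well-founded relation under a function is well-founded then shows that the relation $\{(\beta,\alpha) \mid R(\mathrm{code}\,\beta,\mathrm{code}\,\alpha)\}$ on valid forms is well-founded. The HOL order restricted to valid forms is a subrelation of it, so it is well-founded too.

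I expect the main obstacle to be the exactness step, namely showing that the HOL strict order on valid normal forms is reflected by omega-truth of $R$ at canonical codes. It requires an evaluation lemma: the compiled order term applied to two numerals reaches the value $1$ in the omega model exactly when the decoded forms are ordered. The proof is an induction on rank (or on the normal forms) that matches each clause of the self-referential order to the clause of the HOL lexicographic comparison of $\omega^a+b$. I would first prove that the order term, unfolded once, is decided on numerals, and then do the clause-by-clause correspondence. The correspondence uses the fact that decoding a canonical code returns the original form, which handles the head-exponent and tail comparisons. If rank bookkeeping obstructs this, I would instead prove soundness only in the direction needed, namely that HOL order implies internal truth on canonical codes, since that is the only direction the inverse-image argument uses.
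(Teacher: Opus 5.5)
Your proposal is correct and follows the paper's route. The paper likewise does not prove \isn{epsilon0\_wf} by a direct HOL induction on normal forms. It pulls the result back from the internal theorem (the instance \isn{gi\_WF\_epsilon0} of \cref{thm:iga-wf}) through the soundness bridge of \cref{thm:wf-sound} and the canonical coding, using that every valid normal form has a canonical code on which the internal order is exact.
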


This is a classical claim, and the route to it is the point.  The
theorem is not proved by a direct HOL argument about normal forms.
It is pulled back from the internal theorem through
\cref{thm:wf-sound} and the canonical coding.  The bridge is
omega-soundness (\cref{thm:sound}), which is itself a HOL argument of
strength well beyond $\eps$, so the corollary is not a reduction in
proof-theoretic strength.  What it shows is that the grounded system
proves its ordinal sentence outright, and that the sentence means
what it should.

The same programme runs for the Veblen notation below $\Gamma_0$.
The Veblen functions $\varphi(a,b)$ enumerate, for each $a$, the
common fixed points of all $\varphi(a',\cdot)$ with $a' < a$, starting
from $\varphi(0,b) = \omega^b$; so $\varphi(1,0) = \eps$, and
$\varphi(2,0)$ is the first fixed point of $\alpha \mapsto
\varepsilon_\alpha$.  Every ordinal below $\Gamma_0$, the first
ordinal $\gamma$ with $\varphi(\gamma,0) = \gamma$, is a finite sum of
principal values $\varphi(a,b)$ in a normal form due to Sch\"utte.
The development codes this as a closed order with two sorts of code,
principal values and sums.

\begin{thm}[\isn{gi\_acc\_phi\_fin}, \isn{vOrd\_canon\_ordinal}]
\label{thm:veblen}
For every finite $n$, \iga proves $\varphi(n,0)$ accessible, and on
canonical codes below $\varphi(n,0)$ the notation satisfies the full
certificate $\Ord$: decidedness, irreflexivity, transitivity,
trichotomy, and well-foundedness.  HOL's well-foundedness of the
segment below $\varphi(n,0)$ follows.
\end{thm}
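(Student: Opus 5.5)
The plan is an outer meta-level induction on $n$, producing for each $n$ a closed \iga derivation of $\Acc \cdot \ulcorner\varphi(n,0)\urcorner$. Each stage is an internal induction that reuses the four-fold nest of \cref{thm:iga-wf} for Cantor normal forms.

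\emph{Reduction.} Accessibility of $\varphi(n,0)$ unfolds to accessibility of every code below it. A code below $\varphi(n,0)$ in Sch\"utte normal form is a finite descending sum of principal values $\varphi(a,b)$. Either $a<n$ with $b$ arbitrary below the segment, or $a$ and $b$ are themselves below $\varphi(n,0)$. It therefore suffices to prove, as a hypothetical derivation in the style of the transfer recipe, a lemma with two parts:
\begin{itemize}
\item from accessibility of all principal values in a segment, accessibility of all finite descending sums over them;
\item the principal lemma: for fixed $k$, if every $\varphi(a,b)$ with $a<k$ is accessible whenever $b$ is, then every $\varphi(k,b)$ is accessible whenever $b$ is.
\end{itemize}
The first part is the same sum-induction as the Cantor-normal-form case. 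It runs as a strong induction on the explicit rank, then ground induction on the head summand's accessibility, then ground induction on the tail's accessibility.

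\emph{Principal lemma at level $k$.} I would prove it by ground induction on $\Acc\cdot b$. The motive is ``$\Acc\cdot\varphi(k,b)$ and, for every $a<k$, $\varphi(a,\cdot)$ preserves accessibility up to $\varphi(k,b)$''. Its closure step unfolds accessibility at $\varphi(k,b)$ and splits on the decided comparison of a predecessor $\varphi(a',b')$ with $\varphi(k,b)$, giving three cases:
\begin{itemize}
\item $a'=k$ and $b'<b$: use the induction hypothesis.
\item $a'<k$ and $b'<\varphi(k,b)$: use the hypothesis at lower level, together with an inner strong induction on rank. This inner induction is needed because $b'$ is only bounded by $\varphi(k,b)$, not by $b$.
\item $a'>k$ and $\varphi(a',b')<\varphi(k,b)$: by the order's definition this forces $\varphi(a',b')\le b$, so the case reduces to accessibility of $b$'s predecessors.
\end{itemize}
As in the paper's own recipe, every split must be on a decided matrix of the closed order term. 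No motive may contain an implication at an undecided antecedent; hence the relative, hypothetical form of the level hypothesis, which is discharged by composing derivations at the meta level.

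\emph{Iteration to $\varphi(n,0)$.} Level $0$ is the $\eps$-style fact that $\omega^b$ preserves accessibility, obtained from the transfer recipe of \cref{thm:iga-wf}. The principal lemma then takes levels $0,\dots,k-1$ to level $k$, and the induction on $n$ is meta-level. This is exactly why the theorem is stated for every finite $n$ rather than uniformly: a uniform version would require using the level hypothesis for an open $k$ as an induction principle. Applying the lemma at $b=0$ gives $\Acc\cdot\varphi(n,0)$, and the accessibility of each canonical code below follows. The remaining clauses of $\Ord$ on canonical codes come from mutual number induction on rank using the decided comparison term, mirroring the Cantor-normal-form certification:
\begin{itemize}
\item decidedness,
\item irreflexivity,
\item transitivity,
\item trichotomy.
\end{itemize}
Trichotomy holds only on canonical codes, since distinct codes of one ordinal are incomparable. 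HOL's well-foundedness of the segment then follows, as in \cref{thm:eps-wf}, through \cref{thm:wf-sound} together with the decoding, which is sound on all codes and exact on canonical ones.

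\emph{Main obstacle.} I expect the hardest step to be the $a'<k$ case of the principal lemma. There the predecessor's argument $b'$ may be as large as the fixed point $\varphi(k,b)$ itself, so the ground induction on $b$ gives nothing directly. My first attempt would be to strengthen the motive into a simultaneous statement over the whole segment below $\varphi(k,b)$, with the rank-induction outermost as in Appendix~\ref{app:cnf}. I would also check that the closed order term admits this bound as a decided guard, so that the necessary case analysis stays inside the grammar of positive bodies.
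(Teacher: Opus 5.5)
Your architecture matches the paper's. The pieces correspond as follows: the principal step relative to a schema premise is \isn{vk\_Lgen}, discharged level by level at the meta level; the sum lemma is \isn{vk\_K}; the inner rank induction handles predecessors with a smaller first field; and the certificate and HOL corollary go as you describe. But the principal step fails as you set it up.

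\emph{The motive is missing $\Acc\cdot b$.} Under the weak form of the rule, the closure step at the point $b$ receives the motive at each predecessor of $b$, not that predecessor's accessibility. Your third case needs accessibility: a principal predecessor $p'$ with $p'<b$ must be shown accessible. The motive at $p'$ gives nothing when the first field of $p'$ lies above $k$. Then $p'$ is a fixed point of $\varphi(k,\cdot)$, no code with fields $k$ and $p'$ satisfies the validity condition $F_2x<x$, and a statement about such codes is vacuous. The paper's motive is $\mathrm{mL}(k,b)=\Acc\cdot b\wedge\mathrm{RC}(k,b)$. Its first conjunct is re-derived in each step from the predecessors' first conjuncts by accessibility introduction, and it closes the case $p'<b$; the derived strong form of the rule would also serve.

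\emph{The other conjuncts.} The second conjunct must be $\mathrm{RC}(k,b)$, accessibility of every valid principal code with fields $k$ and $b$, not of one code $\varphi(k,b)$. Codes differing only in rank are incomparable, so the predecessor in your first case need not be the code your hypothesis names. Your extra conjunct, that $\varphi(a,\cdot)$ preserves accessibility, does no work. As a term $\neg(\Acc\cdot e)\vee\cdots$ it is provable only through its right disjunct, which the first conjunct already yields. Proving it by discharging $\Acc\cdot e$ is exactly the bivalence step \iga lacks.

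\emph{The case split.} Split on the three disjuncts of the unfolded principal-below-principal clause, with $x$ the code at hand: $F_1p'<k$ and $F_2p'<x$; $F_1p'=k$ and $F_2p'<b$; $p'<b$. Do not split on whether $a'$ is below, equal to or above $k$. Trichotomy fails off canonical codes, so that split is not exhaustive.

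\emph{Your main obstacle.} Do not move the rank induction outermost. Ranks are explicit and bounded only from below by the fields' ranks, so the rank of a predecessor with fields $k$ and $b'$ bears no relation to that of $x$; that case has to come from the ground induction on $b$. The paper nests the rank induction inside the step, at the fixed $x$. It is a strong induction on $s$ with motive $\mathrm{PA}(x,s)$: every valid principal code below $x$ of rank $s$ is accessible. It uses a helper: every valid $e<x$ of smaller rank is accessible (zero outright, a principal by the side hypothesis, a sum because its principal head lies below $x$, then \isn{vk\_K}). In the case $F_1p'<k$, the guard gives $F_2p'$ smaller rank than $p'$, the helper makes it accessible, and the schema premise applies. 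Motives need not be positive, only bodies, so your worry about the grammar is moot; what matters is that every split is on a decided term.

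\emph{Three smaller corrections.}
\begin{itemize}
\item Level $0$ needs no transfer recipe, which concerns a different order and a global hypothesis. Nothing lies below the zero code, so the schema premise of the principal step is vacuous there.
\item The sum lemma is lighter than the Cantor nest. With the principal head $p$ fixed and accessible, every predecessor of a sum with head $p$ is below $p$, equal to $p$, or a sum with head $p$ and a smaller rest. So \isn{vk\_K} needs only a ground induction on the rest and a rank induction, none on the head.
\item The schema premise at level $k$ ranges over every valid code below the code of $k$. Each $j<k$ has many such codes, differing in rank and mutually incomparable. So the meta-level statement must be uniform over them, with the principal step applied at each, not a statement about one code per ordinal.
\end{itemize}
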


Trichotomy on arbitrary codes fails for the Veblen notation as it did
for $\eps$; the certificate is stated on the canonical codes, which a
closed recursive test recognizes.  Transitivity is three nested rank
inductions, and irreflexivity follows from it.  So the lower bound
on what \iga certifies as an ordinal is $\varphi(\omega,0)$, the
supremum of the $\varphi(n,0)$.  No upper bound is claimed.  What happens at the next step, the price of the classical world, is the subject of \S\ref{sec:gamma0}.

\section{Goodstein's Theorem and the Hydra Game}
\label{sec:goodstein}

Goodstein's theorem~\cite{goodstein44restricted} and the Kirby--Paris
hydra game~\cite{kirby82accessible} are the standard examples of
natural arithmetic statements that are true but unprovable in Peano
Arithmetic.  Both are proved by assigning ordinals below $\eps$ and
showing each step decreases the ordinal, and both are therefore
within reach of any system that proves $\eps$ well-founded: Peano
Arithmetic with transfinite induction to $\eps$ added proves them, and
so does its intuitionistic counterpart, since the theorems are
$\Pi^0_2$.  What this section shows is that \iga proves Goodstein's
theorem outright, from one native rule rather than an added schema,
in a logic that has not even intuitionistic implication at an
undecided hypothesis, with every step of the arithmetic a grounded
derivation; and the hydra game for every primitive recursive strategy, by the
same method.

\subsection{Goodstein's theorem}

The Goodstein sequence from $m$ writes $m$ in hereditary base $2$,
replaces every $2$ by $3$, subtracts one, writes the result in
hereditary base $3$, and so on.  The theorem is that every such
sequence reaches zero.  The proof reads each term as an ordinal
below $\eps$ by replacing the base by $\omega$; the base change leaves
the ordinal fixed and the subtraction lowers it.

The formalization takes a state to be a pair of the step index and the
current value, and builds, as explicit primitive recursive index
trees, the hereditary base representation, the base-bump-and-decrement
step, the run, and the ordinal map to canonical Cantor-normal-form codes, with a small
course-of-values combinator for the tree recursion, which stores the
whole history of smaller values and reads the two it needs, the
leading exponent and the tail.
Termination of the run from a state is the sentence ``some step
reaches zero'', a $\Sigma^0_1$ sentence over a decided matrix.
Goodstein's theorem is the universal closure over starting values.

The proof in \iga is ground induction along the $\eps$ order of
\cref{thm:iga-wf}, with a motive indexed by the run: ``for every starting state $p$ and
step $i$, if the state reached after $i$ steps from $p$ has ordinal
$\alpha$, the run from $p$ terminates''; the antecedent is a decided
equation, so the motive is the disjunction $\neg A \vee B$ with $A$
decided, which the logic allows.  The closure of the motive needs one arithmetic fact, the \emph{decrease
lemma}: at any state with a nonzero value, the step strictly lowers
the ordinal.  With the universal closure of the decrease lemma as a hypothesis
the theorem is a hypothetical derivation; and the lemma is itself
derived:

\begin{thm}[\isn{gDec\_derivable}, \isn{iga\_goodstein}]
\label{thm:goodstein}
\rga proves the decrease lemma schematically, at an open state.
Hence \iga proves Goodstein's theorem: the closed sentence ``every
Goodstein sequence reaches zero'' is derivable from no hypotheses.
\end{thm}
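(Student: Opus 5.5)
The proof has two halves, and I would do them in order. The first is the decrease lemma in \rga, which is pure arithmetic on primitive recursive index trees. The second is the ground-induction argument in \iga, which uses the lemma as a hypothesis and then discharges it.

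\emph{The decrease lemma in \rga.} Write $o(p)$ for the ordinal code of a state $p=\langle k,v\rangle$ and $s(p)$ for the step. The lemma says that from $\neg(v=0)$ one gets $o(s(p)) \prec o(p)$, where $\prec$ is the Cantor-normal-form order of \cref{thm:iga-wf}. Because $o$, $s$ and the order are compiled primitive recursive functions, every atom involved is a decided equation, so I can argue freely by case splits and by number induction at an open state.

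I would split the lemma in two.
\begin{itemize}
\item \emph{Base invariance.} Let $h_b(v)$ be the hereditary base-$b$ representation read as a Cantor-normal-form code. Then $h_{b+1}$ of the bumped value equals $h_b(v)$ as codes. This goes by strong induction on $v$, using the course-of-values combinator. The representation is the leading term $b^{e}\cdot c$ plus the tail, with $e$ and the tail smaller, so the inductive hypothesis applies to both.
\item \emph{Predecessor strictly decreases.} For $w>0$ in base $b+1$, $h_{b+1}(w-1) \prec h_{b+1}(w)$. This goes by strong induction on $w$, with a case split on whether the last digit is nonzero. If it is nonzero, only a coefficient drops. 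If it is zero, a borrow replaces $\omega^{e}$ by a sum of terms with smaller exponents, and the inductive hypothesis is applied to the exponent $e$.
\end{itemize}
Composing the two gives the lemma. Universal introduction at a fresh number variable then gives it schematically, which is the first claim of the theorem.

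\emph{Goodstein in \iga.} Take the motive $c(\alpha)$ to be
\[
\cA\, p.\ \cA\, i.\ \neg\bigl(o(\mathrm{run}(p,i))=\alpha\bigr) \vee T(p),
\]
where $T(p)$ is the $\Sigma^0_1$ termination sentence. The antecedent is a decided equation, so this is a legitimate term.

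For closure at a fresh $\alpha$, assume the unfolded accessibility body with $\lambda c$ in place of $a$. Fix $p$ and $i$ and split on the decided guard $o(\mathrm{run}(p,i))=\alpha$.
\begin{itemize}
\item If the guard fails, the left disjunct holds.
\item If the guard holds, split on whether the value at $\mathrm{run}(p,i)$ is zero. If it is zero, the run has stopped, and $T(p)$ holds with witness $i$. If it is nonzero, the decrease lemma gives that $o(\mathrm{run}(p,i+1))$ is below $\alpha$. Instantiating the closure hypothesis at that predecessor, and then at $p$ and $i+1$, yields either a refuted equation, which is impossible because it is true by conversion, or $T(p)$.
\end{itemize}
This is the derived induction schema of the rule. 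Its number premise holds because $o$ is total.

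To finish, for a fresh $m$ apply \cref{thm:iga-wf} at the code $o(\langle 2,m\rangle)$. This needs a domain fact: that code is canonical and lies in the domain $D$ of the order. Specialising the motive to $p=\langle 2,m\rangle$ and $i=0$, the equation holds by conversion, and we obtain $T(\langle 2,m\rangle)$. Universal introduction then gives the closed sentence.

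I expect the main obstacle to be the borrow case of the predecessor lemma. There the Cantor-normal-form comparison must be shown for a rebuilt code whose hereditary exponents were themselves recomputed. That needs a side lemma that hereditary representations in base $b+1$ of values below $(b+1)^e$ have codes below $\omega^{e'}$, where $e'$ is the code of $e$. I would try to prove it first, by strong induction on $e$, before anything else. A lesser obstacle is making sure every code that $o$ produces satisfies the domain predicate $D$ of \cref{thm:iga-wf}; I would prove this alongside base invariance in the same induction.
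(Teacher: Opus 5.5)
Your \iga half is the paper's own. The motive is its $\mathrm{MB}$, and the closure matches it step for step: the two decided splits, the decrease lemma, transport along the guard, and the instance at $p$ and $i+1$.

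The gap is in base invariance, which does not go through as you sketch it. To unfold $h_{b+1}$ at the bumped value $y=\psi(v)=(b+1)^{\psi(e)}+\psi(t)$, you need the base-$(b+1)$ leading exponent and tail of $y$. The inductive hypotheses at $e$ and $t$ apply only once you know these are $\psi(e)$ and $\psi(t)$, that is, $(b+1)^{\psi(e)}\le y<(b+1)^{\psi(e)+1}$. The lower bound is just the unfolding. The upper bound is real content: the bumped tail must stay small relative to the bumped leading power. That requires the bump to be strictly increasing in the value, already to get $\psi(e_t)<\psi(e)$ when the tail's exponent $e_t$ is below $e$.

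The paper spends most of the decrease lemma here, in three steps:
(4) a digit law for $\psi$ at $b^e(k+1)+r$;
(5) a separate strong induction proving the bump strictly monotone (\isn{M\_univ});
(6) from it, the bound $\psi_S(b,m)<(b+1)^{\psi_S(b,\mathrm{lgcT}(b,m))+1}$ (\isn{psi\_UB\_full}).
Only then is base change (step 7) your argument plus uniqueness of the leading exponent. Your plan has no counterpart to these three steps.

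Conversely, the obstacle you anticipate does not arise. The internal comparison $L$ looks at heads first: $y$ has rank $0$ and $x$ does not; or $L(\mathrm{head}\,y,\mathrm{head}\,x)$; or the heads are equal and $L(\mathrm{tail}\,y,\mathrm{tail}\,x)$. So when $w=(b+1)^E$ the comparison is $L(h(E-1),h(E))$, which is the hypothesis at the exponent; otherwise it is the hypothesis at the tail. No lemma placing codes of values below $(b+1)^e$ under $\omega^{e'}$ is needed.

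Prove general monotonicity rather than the predecessor case: for every $m<n$, $L(h(m),h(n))$. Use strong induction on $n$. Apply the hypothesis at the tails when the leading exponents agree, and at the exponents when $e_m<e_n$, using that the leading exponent is monotone in the value.

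You need the general form anyway. The order of \cref{thm:iga-wf} is $V\,y\wedge V\,x\wedge L\,y\,x$ over the domain of all numbers. So validity of both codes is a conjunct of the decrease lemma, not a domain fact checked at the start. Its spine condition at $h(v)$ asks for $L(h(e_t),h(e))$ when $e_t<e$, which is monotonicity within one base, not something base invariance supplies.
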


Both halves are grounded claims.

The schematic decrease lemma is where the work is.  Its proof is an
internal theory of hereditary base representations over the compiled
trees: laws for powers and for the leading exponent, the
course-of-values tree unfolded at open arguments, a law for the
digits of a representation, and on them four strong inductions with
a base parameter, that the base bump is strictly increasing in the
value, that the base change preserves the ordinal, and that the
representation is valid and monotone, together with an upper bound
on the bump that needs no induction; the decrease lemma also asserts
that both ordinal codes are valid.  Appendix~\ref{app:goodstein}
gives the proof step by step.  Every step of the classical argument
has an internal counterpart.  The HOL statement, that every Goodstein
sequence reaches zero, follows from the \iga theorem through
soundness, a classical claim.

\subsection{The hydra game}

A hydra is a finite rooted tree.  Hercules chops off a head, a leaf;
if the head's parent is not the root, the hydra regrows copies of the
subtree at the head's parent, as many as the current stage number,
attached at the grandparent.  The theorem is that every battle ends
with the bare root, whatever heads Hercules chooses.  The ordinal of a
hydra is its reading as a Cantor normal form, $\omega$ to the power
of each child's ordinal, summed in descending order, and every chop
lowers it.

The grounded statement differs from the classical one in one place,
and the difference is deliberate.  Classically ``whatever heads
Hercules chooses'' quantifies over all strategies, functions from
positions to heads.  A grounded system quantifies over numbers, so
the strategies it speaks of are given by codes: here, any index tree
of a primitive recursive function choosing a head from the current
hydra and the stage, out-of-range choices reading as the last head.
``Hercules wins against every primitive recursive strategy'' is the
recipe form of the theorem, and the form a computable logic states;
extending it to all computable strategies would need a universal
over partial computations with a totality guard, which the present
coding does not attempt.  The classical form, for every function, is
stated in HOL alongside it.

The formalization codes a hydra as a Cantor-normal-form term read as
a tree, with a bijective code, and builds the chop-and-regrow step,
the ordinal map by insertion into the descending list of exponents
(the natural sum), and the run as explicit index trees over the same
course-of-values combinator.  The Goodstein argument is restated as a
generic termination engine over any step tree and ordinal tree, and
the hydra theorem is its instance.

\begin{thm}[\isn{hyd\_hDec\_derivable}, \isn{iga\_hydra}]
\label{thm:hydra}
For every primitive recursive strategy $s$, \rga proves the decrease
lemma for $s$ schematically, at an open state: at every state with a
nonzero hydra, the chop chosen by $s$ lowers the ordinal.  Hence \iga
proves outright that every battle under $s$ terminates.
\end{thm}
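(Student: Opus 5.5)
The proof follows the Goodstein argument of \cref{thm:goodstein}, run through the generic termination engine the paper mentions. The two halves are handled separately. First, the schematic decrease lemma for $s$ is derived in \rga. Second, the \iga theorem is obtained by instantiating the engine at the hydra step tree and ordinal tree, for the fixed strategy index tree $s$.

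For the engine, fix a step tree $\mathit{st}$ and an ordinal tree $o$, both compiled primitive recursive, with $o$ landing in valid Cantor-normal-form codes. The hypothesis is the universal closure of the decrease lemma: at every state with nonzero value, $o(\mathit{st}(p))$ is below $o(p)$ in the $\eps$ order of \cref{thm:iga-wf}. The engine then proves termination of the run from every state. The method is ground induction along the accessibility term of that order, whose well-foundedness \iga proves by \cref{thm:iga-wf}. The motive is the one used for Goodstein: ``for every start $p$ and step $i$, if the $i$-th state from $p$ has code $\alpha$, then the run from $p$ terminates''. Its antecedent is a decided equation, so the motive has the permitted form $\neg A \vee B$. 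Closure goes as follows.
\begin{itemize}
\item At a point $\alpha$, split on whether the value at step $i$ is zero; this test is decided.
\item If it is zero, the run terminates, with witness $i$.
\item Otherwise, the decrease lemma gives a code $\beta$ below $\alpha$ for step $i+1$. The closure hypothesis, the body with $\lambda c$ substituted, then yields the motive at $\beta$ for the same $p$ and step $i+1$.
\end{itemize}
Applying the rule at $\alpha = o(p)$ and $i=0$, with the number premise supplied by totality of the compiled $o$, gives termination. Discharging the hypothesis is then just a cut with the derived lemma, so \iga proves battle termination under $s$ outright.

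The hydra setup needs the following compiled trees, all built with the course-of-values combinator:
\begin{itemize}
\item the bijective tree coding of hydras;
\item the head selection by $s$, with out-of-range choices clamped to the last head;
\item the chop-and-regrow step using the stage number;
\item the natural-sum ordinal map, by insertion into the descending exponent list.
\end{itemize}
I will first prove in \rga, schematically at open arguments, the algebraic laws these trees need: unfolding of the course-of-values recursion, validity of the insertion output, and monotonicity of insertion in the order. Each is proved by strong induction on rank.

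The main obstacle is the decrease lemma itself, at an open hydra and an open stage. I plan a strong induction on the depth, or rank, of the hydra along the path to the chosen head, proving two things.
\begin{itemize}
\item \emph{Leaf at the root.} Chopping a head attached directly to the root removes one $\omega^0$ summand from the natural sum, which is strictly smaller.
\item \emph{Deeper head.} Here the subtree at the grandparent changes. The parent's ordinal $\omega^{\gamma+1}$-style summand, with the chopped term removed, is replaced by $k$ copies of a strictly smaller summand. The sum is then below by the standard Cantor-normal-form fact that $\omega^\delta \cdot k < \omega^{\delta'}$ whenever $\delta<\delta'$.
\end{itemize}
The recursion then propagates the decrease upward, via monotonicity of insertion at each ancestor. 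Each of these facts is proved as an internal law on codes with decided guards, so every case split is on a decided matrix, as in the Goodstein proof. The step I expect to be hardest is the interaction of regrowth multiplicity with insertion into a descending list. I will first prove a lemma that inserting $k$ copies of $\omega^\delta$ into a list all of whose exponents are at most the removed $\delta'$ stays below, by number induction on $k$, and then lift it through the tree recursion.
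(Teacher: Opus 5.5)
Your \iga half matches the paper's: Goodstein's argument recast as a generic termination engine over a step tree and an ordinal tree, transfinite induction along the $\eps$ order of \cref{thm:iga-wf} with the run-indexed motive, and a cut against the \rga derivation of the decrease lemma. One fix in the closure: split first on the decided antecedent, that the $i$-th state has ordinal $\alpha$, and transport the decrease along that equation. The decrease lemma gives a code below the $i$-th state's ordinal, not below $\alpha$.

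The \rga half follows the paper's plan in outline, but it is missing a layer the paper needs. Insertion into the descending exponent list is a compiled primitive recursive tree, so a compiled comparison tree decides where each entry goes. The decrease lemma, however, must be stated in the $\eps$ order. Its comparison $L$ is a separate fixed point that walks down the spine, comparing leading exponents by $L$ or by literal code equality. The paper therefore proves an internal order theory relating the comparison tree to $L$, and it can do so only on canonical codes. Two distinct valid codes of one normal form are $L$-incomparable, so trichotomy fails on arbitrary valid codes.

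Both of your lemmas, ``validity of the insertion output'' and ``monotonicity of insertion in the order'', depend on this bridge. Validity requires each spine step of the output to be $L$-descending or code-equal. That follows from insertion's placement decisions only if the comparison tree agrees with $L$ and is trichotomous on the codes it meets. Likewise, the final comparison at the grandparent walks the common prefix by code equality, which needs equal ordinals to have equal codes. So you must show that the ordinal map lands in canonical codes, not merely valid ones, and that the comparison tree agrees with $L$ there.

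Two smaller points. First, the motive of your main induction must quantify over the head position, ranging over any number with out-of-range values read as the last head. It should not use the head that $s$ chooses, because $s$'s choice at the whole state determines nothing about positions inside a subtree. The paper's main induction is the HOL decrease lemma ``with the position universally quantified'', with $s$ entering only by instantiation at the end. That is also what makes one derivation pattern serve every strategy.

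Second, your $k$-copies lemma as stated covers only a grandparent whose other children all have exponent at most $\delta'$. In general, the siblings with larger exponent form a common prefix, with literally identical codes in the two hydras. You must strip that prefix first, or use strict monotonicity of the natural sum in one summand instead, before the inequality $\omega^{\delta}\cdot k < \omega^{\delta+1}$ can do its work.
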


Both halves are grounded claims, as for Goodstein.  The decrease
lemma's derivation is the same kind of work as Goodstein's: an
internal theory of the hydra's trees over the compiled projections,
in seven layers.  Projection bounds at open arguments for the pairing
trees come from a bridge to the development's list coder; the six
unfolding laws of the hydra trees follow as case laws; the internal
order theory of the comparison tree against the Cantor-normal-form
order on canonical codes reuses the ordinal kit of the $\mathcal O$
development; the insertion lemmas and the validity of the ordinal map
are nine strong inductions; and the main induction mirrors the HOL
decrease lemma with the position universally quantified.  The three
HOL readings, through soundness, directly from $\eps$, and for every
strategy as a function, are theorems as well.

\section{Truth as a Fixed Point}
\label{sec:truth}

Kripke's theory of truth~\cite{kripke75outline} defines the truth
predicate for a language containing it as the least fixed point of a
jump operator.  Feferman's Kripke--Feferman axioms~\cite{feferman91reflecting}
turn that construction into a classical theory, and Cantini's
minimality principle~\cite{cantini89notes} adds the statement that
the fixed point is the least one.  In \iga the construction needs no
separate axioms.  Truth for \rga's own language is one positive
fixed point, and ground induction is its minimality
principle.

\subsection{The omega model internalized}

What the fixed point internalizes is not a true/false predicate but
the omega model itself.  A point of the truth term is a triple, a tag,
the code of a term, and a result: with tag $0$ the point says ``the
coded term has this value'', with tag $1$ ``the coded term has this
head form''.  Truth and falsity are the value-$1$ and value-$0$
points, and negation swaps them.  The body is the omega model's twenty-one evaluation rules
transcribed onto codes as twenty-two clauses, the equality rule split
in two; Appendix~\ref{app:truth} lists them.  The body consults no decider and
restricts itself to no class of codes; the equality clause is the
omega model's own, split into an equal-values case and a decided
disequality case.  The body is positive by the grammar of
\S\ref{sec:positive} and here the grammar's cost is
paid in full: every recursive atom sits at a bound variable fed by a
decided equality guard under an existential, up to nine binders in a
clause.

\begin{thm}[\isn{tr\_omv\_iff}, \isn{tr\_omv\_iff\_hd}]
\label{thm:tr-exact}
For every term $t$ and result $r$, with no hypothesis on $t$, the
truth term at the point $\langle 0, \langle \lceil t \rceil,
r\rangle\rangle$, written $\Tr(\lceil t\rceil, r)$ below, is true in
the omega model exactly when $t$ has value $r$ in the omega model;
and at the tag-$1$ point exactly when $t$ has the given head form.
\end{thm}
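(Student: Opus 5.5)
The plan is to get both directions from \cref{thm:lfp}, reducing exactness to a comparison of two least fixed points over the same index set. Let $\mathit{Tr}$ be the truth term's fixed point and $F$ its operator on sets of triples. By \cref{thm:lfp}, the set of points at which $\mathit{Tr}$ is omega-true is $\lfp\,F$. The omega model is, by definition, the least pair of relations (value and head form) closed under its twenty-one rules. Code it as a set of triples
\[
  M \;=\; \{\langle 0,\langle \lceil t\rceil, r\rangle\rangle \mid t \text{ has value } r\}
  \cup \{\langle 1,\langle \lceil t\rceil, w\rangle\rangle \mid t \text{ reaches } w\}.
\]
The theorem then says exactly that $\lfp\,F$ and $M$ agree at every tag-$0$ and tag-$1$ point of the form $\langle \mathit{tag},\langle \lceil t\rceil,\cdot\rangle\rangle$. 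So I will prove the two inclusions between $\lfp\,F$ and $M$, restricted to such points.

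For $M \subseteq \lfp\,F$, I will run a rule induction on the omega model's mutually inductive evaluation. For each of the twenty-one rules, I will show that its conclusion's triple lies in $\lfp\,F = F(\lfp\,F)$, given that its premises' triples do. This means checking that the matching clause of the body (Appendix~\ref{app:truth}) is true under the oracle reading at $\lfp\,F$. Each clause's existential guards are decided equalities between compiled primitive recursive decoders and bound variables. The witnesses are therefore the actual codes of the subterms, the substituted body, or the reduct, and each guard evaluates to $1$ because the compiled code functions are correct. The infinitary universal rule needs one extra step: the universal clause quantifies over all $n$ with atoms at the code of $u\cdot\bar n$, and the premise ``every instance has value $1$'' gives the needed membership pointwise. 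The equality rule is handled by two cases, matching its split into an equal-values clause and a decided-disequality clause.

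For $\lfp\,F \subseteq M$, I will use HOL least-fixed-point induction: it suffices that $F(M) \subseteq M$. Take a triple in $F(M)$. Exactly one clause fires under the oracle reading at $M$, and its witnesses are pinned by the decided guards. Each witness is either the code of a genuine subterm or reduct of the decoded term, or a code that decodes to no term. At points coding $t$, decoding inverts coding, so the premises land in $M$ at points of the form $\lceil s\rceil$, and the matching omega rule yields the conclusion. The step I expect to be the main obstacle is the exact correspondence between the compiled code operations and their meta-level counterparts: substitution on codes (used for $\fix$ unfolding and application), de~Bruijn shifting, and head-form codes. Without it the witnesses found in the clause need not be codes of the right terms. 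I would first prove, in HOL, that each compiled tree computes the coding of the corresponding syntactic operation. That turns every clause into a literal restatement of a rule, after which both inductions are case-by-case bookkeeping.

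One subtlety is that $M$ is defined only on images of $\lceil\cdot\rceil$, while $F$ ranges over all numbers. I will take $M$ to contain no junk points and check that no clause fires with premises only at coded points yet a conclusion at an uncoded one. This holds because clause conclusions are built from decoded components, and codes are closed under the operations involved. The tag-$1$ statement falls out of the same mutual induction.
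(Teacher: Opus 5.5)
Your plan is the paper's in outline. Completeness is a simultaneous rule induction on the omega relations, with one matching clause per rule and the equality rule split in two. Soundness is an induction from the fixed point into a target set, checked clause by clause. Routing both halves through \cref{thm:lfp} only repackages the set form of \cref{lem:gfix-ind} and the oracle-to-term transport, provided you cite that the truth body is positive and its double abstraction closed (\isn{pos\_trPhi}, \isn{closed\_trPhi}).

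The gap is in the soundness half: your junk-free $M$ is not a pre-fixed point of $F$, so $F(M)\subseteq M$ fails and least-fixed-point induction cannot be run at $M$. The body restricts itself to no class of codes, and several clauses bind a code that no premise constrains. Two examples:
\begin{itemize}
\item The clause ``an abstraction reaches itself'' has one binder and no atom. So for every number $k$, including one that codes no term, the tag-$1$ point built from the abstraction code at $k$ lies in $F(\emptyset)\subseteq\lfp\,F$.
\item The clauses that add an argument to a partial recursor leave the argument's code free. So a coded premise, for instance that the recursor reaches itself, yields a conclusion at the application of a term's code to a junk number.
\end{itemize}
Hence $\lfp\,F$ really contains uncoded points and is not contained in your $M$. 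Your justification points the wrong way. The clauses apply builders to bound variables, and closure of codes under the builders says coded inputs give coded outputs. It does not say that a clause firing with coded premises has a coded conclusion, since not every input of the conclusion's builder appears in a premise.

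The repair is the paper's target set $\mathrm{trS}$. It constrains only points whose code part codes a term, and leaves every other point, and every tag $\ge 2$, unconstrained; it is your $M$ together with all junk points. Closure then has content only when the conclusion point is coded, and there the implication runs in a direction that does hold:
\begin{itemize}
\item inversion of the code builders makes every component of a coded conclusion a code;
\item invertibility of the head-form coding pins the body code in the beta clauses;
\item correctness of compiled substitution on codes (\isn{bsub\_code\_correct}) makes the substituted premise coded.
\end{itemize}
So every atom the clause consults sits at a coded point, where membership is a genuine omega fact and the matching rule applies. From $\lfp\,F\subseteq\mathrm{trS}$ you get exactly your inclusion restricted to coded points.

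A smaller correction: at a point of $F(S)$ you get \emph{at least} one firing clause, because the empty list disjunction is $0=1$ and never true. You do not get exactly one, and at least one is all you need.
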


This is a classical claim.  Soundness is the set form of \cref{lem:gfix-ind} with the set of
points matching the omega model, over twenty-two per-clause
decodings and the determinacy of values in the omega model;
completeness is an induction on the omega model's evaluation, with
the oracle-to-term transport (Appendix~\ref{app:truth}).  By \cref{thm:lfp} the truth term's
extension is the least fixed point of its operator, which is the
formal content of ``Kripke's fixed point''.  The truth term is
type-free: its points are codes of arbitrary terms, its own
applications included, and no hierarchy of languages appears.  At the false atom $0=1$ the truth term is not true.  Two classical
consequences for \iga: whatever \iga proves true of a code is true of
the term, and \iga does not prove $0=1$ true.

\subsection{Clause laws, inversions, and the refutation}

The twenty-two clauses are derived rules of \rga at open codes, and
of \iga under the same names: for instance, from the truth of $p$ at
value $1$, the truth of $p \vee q$ at value $1$, and from the truth
of every instance, for an eigenvariable, the truth of a universal in
head form.  Ground induction specialized to the truth term is its induction
principle, and it is the one place in this section where \iga adds
to \rga: the clause laws, the inversions and the refutation below are
\rga theorems, embedded.

The inversions, the elimination direction of each clause, need
injectivity and distinctness of the code builders at open arguments,
which the development's internal theory of Cantor pairing supplies,
and an existential elimination with a fresh eigenvariable, a
variable standing for an arbitrary witness.  All twenty-two inversions are derived rules, packaged as a
twenty-two-way case split.  With them:

\begin{thm}[\isn{tr\_bot\_refuted}, \isn{gi\_tr\_bot\_refuted}]
\label{thm:tr-bot}
\rga, and hence \iga, proves $\neg\,\Tr(\lceil 0 = 1 \rceil, 1)$: the
truth term is refuted at the false atom.
\end{thm}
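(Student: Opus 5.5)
The plan is to derive the refutation inside \rga from the twenty-two-way inversion of the truth term, with no use of ground induction, and then to embed the derivation into \iga, since every \rga rule is an \iga rule.  Write $p$ for the point $\langle 0, \langle \lceil 0 = 1 \rceil, 1\rangle\rangle$.  The target $\neg\,\Tr(\lceil 0=1\rceil,1)$ is a negation.  Its argument is not provably decided in advance, so I would not reach it by conditional proof from $\Tr(\lceil 0=1\rceil,1)$ as a hypothesis.  Instead I would unfold the truth term once at $p$ by conversion, which turns the goal into the negation of the body at $p$.  That negation is a conjunction over the twenty-two clauses of the negation of each clause.  Each clause is an existential, a negated universal, so its negation is, by double negation, a universal over the clause's bound variables of a negated conjunction.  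Schematic universal introduction then reduces every clause to proving the negation of the conjunction at fresh eigenvariables.

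For each clause the conjunction opens with a decided equality guard.  The guard equates $p$, or some component of $p$, with a code-builder pattern in the bound variables.  The rest of the conjunction consists of recursive atoms and side conditions.  It therefore suffices to refute the guard.  A refuted conjunct makes the conjunction false, and this step never has to consult the recursive atoms, which need not have values.  For the twenty or so clauses whose pattern has the wrong tag, or whose term code is built by a different constructor (application, negation, disjunction, the universal combinator, a numeral, and so on), the guard is refuted by distinctness of the code builders at open arguments, which the internal Cantor-pairing theory supplies.  This is the same lemma kit that the inversions use.

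The two equality clauses need real work, and I expect them to be the main obstacle.  The equal-values clause requires the result to be $1$ and the two sides to have a common value, say $v$.  Its atoms sit at the points $\langle 0,\langle \lceil 0\rceil, v\rangle\rangle$ and $\langle 0,\langle \lceil 1\rceil, v\rangle\rangle$.  Refuting this clause therefore means refuting that the codes of $0$ and of $1$ share a value, which is a statement about recursive atoms.  I would avoid that by arranging for the guard to capture the value.  A numeral has head-form/value clauses whose only instances are guarded by $v = n$.  So I would first prove, by inversion at the numeral codes, that truth at $\langle 0,\langle\lceil \bar n\rceil, v\rangle\rangle$ entails $v = n$.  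This derivation takes the atom as a hypothesis and concludes a decided equation, so the deduction theorem at a decided conclusion is not needed, only at the decided hypothesis side.  With that in hand the case reduces to the decided contradiction $0 = v = 1$.  If the clause instead compares the numerals syntactically through the code, the refutation is just the decided disequality of the two numeral codes, injectivity giving $\lceil 0\rceil \neq \lceil 1\rceil$ with nothing further.  The disequality clause forces the result to be $0$, and its guard, result $=0$ against the point's result $1$, is refuted directly.

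Finally, I would collect the twenty-two clause refutations by conjunction introduction and fold the unfolding back by conversion, which yields the \rga theorem.  The \iga version, \isn{gi\_tr\_bot\_refuted}, follows by embedding the same derivation.
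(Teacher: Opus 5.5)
Your outer structure matches the paper's. You unfold the truth term negatively at $p$ and refute every clause instance by a decided guard, so that only the equal-values equality clause survives. The gap is in how you dispatch that clause. Its negation, at eigenvariables, is $\neg(G \wedge \Tr(m_1) \wedge \Tr(m_2))$, with $m_1 = \langle 0,\langle\lceil 0\rceil,v\rangle\rangle$ and $m_2 = \langle 0,\langle\lceil 1\rceil,v\rangle\rangle$. After splitting on the decided guards, you still have to derive $\neg\Tr(m_1) \vee \neg\Tr(m_2)$ outright.

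Your lemma, ``truth at $\langle 0,\langle\lceil\bar n\rceil,v\rangle\rangle$ entails $v=n$'', is only a hypothetical derivation $\Tr(\ldots) \ent v = \bar n$ obtained by inversion. Turning it into a refutation needs negation introduction, the deduction theorem, at the hypothesis $\Tr(\ldots)$. That hypothesis is not provably decided: proving it decided at an open $v$ already requires refuting it when $v \neq 0$, which is the step you are missing. The deduction theorem fails at undecided hypotheses even when the conclusion is decided. The Liar gives $L \ent 0=1$, but $\ent \neg L \vee 0=1$ is not derivable. Your remark that it is needed ``only at the decided hypothesis side'' puts decidedness on the wrong side, since the hypothesis is the truth atom. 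This is exactly why the paper says the proof is not by the inversion case split.

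The repair is to prove two refutations outright, each by the same negative unfolding you already use, with every clause guard failing:
\begin{itemize}
\item $\neg\Tr(\langle 0,\langle\lceil 0\rceil,v\rangle\rangle)$ for $v \neq 0$, where the zero clause fails on its result;
\item $\neg\Tr(\langle 0,\langle\lceil 1\rceil,0\rangle\rangle)$, where the successor clause fails because its result is a successor.
\end{itemize}
These are the paper's ``two small facts''. Then, at the equality clause, split on the decided $v = 0$: refute $\Tr(m_2)$ in one branch and $\Tr(m_1)$ in the other.

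Your fallback, that the clause might compare the numerals syntactically through their codes, does not apply. The equality clause transcribes the omega model's own rule and reaches the two sides only through recursive atoms at their value points.
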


This is a grounded claim, and the one the section is after: the
system itself says that $0=1$ is not true, and ground induction is
not needed for it.

The proof is not by the case split.  \rga and \iga have no negation
introduction from a hypothetical derivation, so the refutation
unfolds the fixed point negatively, through the negative-unfolding
rule of the truth term, and refutes every clause instance at the
false atom; only the equality clause can match, and its two value
premises fall to two small facts, that the code of $0$ has no nonzero
value and that the code of $1$ never has value $0$.  Determinacy of
values inside \iga, that the truth term gives a code at most one
value, is proved only from a named closure obligation, the closure
premise of ground induction at the determinacy motive, and that
obligation is open; \cref{thm:tr-bot} does not depend on it.

\subsection{What the truth term does and does not give}

The truth term gives \iga the ability to reason about the semantics of
its own base language, uniformly, by induction over grounding.  It
does not give \iga a proof of \rga's consistency by the obvious route,
and the no-go result of \S\ref{sec:classical} says why.  The classical comparison is
exact in its shape: the Kripke--Feferman axioms plus Cantini's
minimality are the classical twin of ``the omega model as a positive
fixed point plus ground induction''.  The strength comparison is
not, and \S\ref{sec:strength}, on where the lower bound sits, sets it out: the paracomplete sibling
of Kripke--Feferman is known to be weaker than Kripke--Feferman
itself, and the cost has been traced to induction being unusable on
indeterminate formulas~\cite{halbach06axiomatizing,halbach18costs}.
The same cost is visible here in the form of \S\ref{sec:gamma0}: the
induction principle is available, and what is unavailable is the
implication at an undecided antecedent that the strongest uses of it
require.

\section{The Price of the Classical World}
\label{sec:gamma0}

The applications stop at $\varphi(\omega,0)$.  This section says what
the next step would cost, and then what the classical world has that
\iga does not: in every case an implication whose antecedent is not provably
decided, bought in one of three forms, bivalence of accessibility,
conditional proof, and the motive of an internal soundness proof.
Each theorem below is a classical claim, a HOL theorem about what
\iga derives, unless it says otherwise.

\subsection{Well-foundedness of the Veblen notation, exactly}

The Veblen notation below $\Gamma_0$ is where predicativity ends.  A
predicative definition builds a new object only from objects already
accepted, and a predicative proof of well-foundedness may use
induction only along orders already proved well-founded; the
autonomous progressions of Feferman and Sch\"utte, which iterate that
step as far as it will go, stop exactly at
$\Gamma_0$~\cite{feferman64systems,schutte65predikative,kreisel58ordinal}.
The classical reason is well known: to prove the whole notation
well-founded one must prove, for each first argument $a$, that
$\varphi(a,\cdot)$ carries accessible arguments to accessible values,
and the induction on $a$ that does this has a motive with
accessibility in the antecedent of an implication.  The theories of
inductive definitions allow that motive, and they allow it with an
intuitionistic implication: intuitionistic $\mathrm{ID}_1$, even
restricted to accessibility predicates, reaches the Bachmann--Howard
ordinal, far beyond $\Gamma_0$~\cite{buchholz81iterated}.  So the
price of $\Gamma_0$ is not classical logic.  It is an implication
whose antecedent may be undecided, which \iga has for no hypothesis
that is not provably decided (\S\ref{sec:bg}), and accessibility is
not decided.  What is new here is that the purchase can be stated as an
equivalence.  The equivalence locates the grounded boundary, bivalence
of an inductive predicate, and not the boundary of predicativity: the
same one instance, at the accessibility predicate of the notation,
buys each notation its well-foundedness, Buchholz's notation for the
Bachmann--Howard ordinal included (\S\ref{sec:one-rule}), and it is
sound exactly when the notation is well-founded.  $\Gamma_0$ is the
worked case because it is where the unconditional development stops,
every $\varphi(n,0)$ proved and the whole notation open
(\S\ref{sec:strength}).  And the content of the equivalence lies in
the descent from bivalence, which is the classical proof with excluded
middle used at one predicate only, and in the exactness of the
soundness theorem; the step from the rule to the sentence is the exact
deduction theorem.

Call \emph{bivalence of accessibility} for an order $R$ the following
rule, for every well-formed context $\Delta$, every $d$ with $\Delta
\ent d\,\mathbf N$, and every well-formed $X$:
\[
  \frac{\Acc_R\cdot d,\ \Delta \ent X}{\Delta \ent \neg(\Acc_R \cdot d) \vee X} .
\]
It is the deduction theorem at the hypothesis $\Acc_R \cdot d$.  Say that \iga \emph{admits} the rule for $R$ when every instance of it
holds of \iga's own derivability: whenever the premise is derivable
in \iga, so is the conclusion.  Admissibility is a statement about
the system, not a rule added to it.

\begin{thm}[\isn{gi\_WF\_gamma0\_iff\_bivalence}]
\label{thm:gamma0}
\iga proves $\WF$ for the Veblen order on all codes if and only if
\iga admits bivalence of accessibility for that order.
\end{thm}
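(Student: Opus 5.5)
The plan is to prove the two directions separately. The easy direction (WF implies admissibility) uses the exact deduction theorem. The hard direction (admissibility implies WF) is the classical Veblen argument, with excluded middle used only at accessibility.

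\emph{Left to right.} Suppose \iga proves $\WF$ for the Veblen order $R_V$ on all codes, and take a derivable premise $\Acc_{R_V}\cdot d,\ \Delta \ent X$ with $\Delta \ent d\,\mathbf N$. The plan is to show that $\Acc_{R_V}\cdot d$ is provable in $\Delta$ outright. Instantiate the proved universal $\cA\cdot\lambda x.\,\neg D(x)\vee\Acc_{R_V}\cdot x$ at $d$. Here $D$ is the trivially true domain ``all codes'', or a decided domain, in which case we split on $D(d)$. This gives $\Delta \ent \Acc_{R_V}\cdot d$.

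Two routes then finish the step. The first is to cut the hypothesis, obtaining $\Delta\ent X$, and then weaken by disjunction introduction to $\Delta \ent \neg(\Acc_{R_V}\cdot d)\vee X$. The second is to observe that a provably true hypothesis is provably decided, so \rga's deduction theorem at decided hypotheses applies directly; this is the exact deduction theorem of the G\"odel paper, which transfers to \iga. Either way the instance of the rule holds of \iga's derivability.

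\emph{Right to left.} Assume \iga admits the rule for $R_V$. I will run the classical proof that $\varphi(a,\cdot)$ preserves accessibility, by ground induction on $\Acc_{R_V}\cdot a$. The motive is a term:
\[
c(a) \;\equiv\; \cA\cdot\lambda b.\ \neg\,\Acc_{R_V}\cdot b \vee \Acc_{R_V}\cdot\varphi(a,b),
\]
with the Veblen code $\varphi(a,b)$ formed by a primitive recursive code builder. The antecedent $\Acc_{R_V}\cdot b$ is exactly the undecided hypothesis that \iga cannot discharge. Wherever the classical argument concludes ``if $b$ accessible then $\varphi(a,b)$ accessible'', I will first derive $\Acc_{R_V}\cdot\varphi(a,b)$ under the hypothesis $\Acc_{R_V}\cdot b$, using the inner ground induction on $b$. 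I will then apply the admitted rule at $d=b$ to obtain the disjunction. The same applies at sums: the closure of accessibility under the sum-normal-form constructors is proved the same way, reusing the $\eps$ arguments from \cref{thm:iga-wf} relativized to a hypothesis.

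\emph{Main obstacle.} The main obstacle is the closure step of the outer induction. Its hypothesis, $\Phi[\lambda c/a][i/x]$, offers $c(a')$ for all $a'$ below $i$. To use it, $c(a')$ must be eliminated at an accessible $b'$. I plan to take disjunctive syllogism against the hypothesis $\Acc_{R_V}\cdot b'$; this is available because a true disjunct $\neg\Acc$ would contradict a hypothesis. This can be checked against \rga's rules, or else derived via the rule's premise form.

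One more point needs care. The admissibility rule is stated for contexts $\Delta$ derivable in \iga, so it can be used inside the inner derivations only because admissibility is closed under weakening. I will therefore apply it at each point in the form of a derived meta-rule, as an induction on the derivation tree of the Veblen proof.

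\emph{Conclusion.} Finally, the main result follows from $c(a)$. Every code is a finite sum of principal values $\varphi(a,b)$, and both components of each have smaller rank. A strong induction on rank then shows that every code is accessible, which is $\WF$ on all codes.
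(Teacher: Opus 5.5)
Your left-to-right direction is the paper's: instantiate $\WF$ at $d$, discharge the domain guard, cut against the premise, and introduce the right disjunct. The skeleton of your right-to-left direction also matches it. It is ground induction on the accessibility of the first argument with a motive of the form $\cA\cdot\lambda b.\,\neg(\Acc\cdot b)\vee\ldots$, a hypothetical derivation under $\Acc\cdot b$ turned into that disjunct by the single use of the admitted rule, and a closing induction on rank.

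The gap is in the step you call the main obstacle. The outer hypothesis $c(a')$ is needed only for principal predecessors $\varphi(a',b')$ of $\varphi(a,b)$ with $a'<a$. In that case the comparison bounds $b'$ only by $\varphi(a,b)$, not by $b$: with $a=1$ and $b=0$, $b'$ ranges over everything below $\eps$. So $\Acc\cdot b'$ is not a hypothesis of your derivation. Neither $\Acc\cdot b$ nor an induction on $b$ supplies it, and disjunctive syllogism has no premise to use.

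That accessibility is the real content of the Veblen argument; classically it is the induction on the length of terms. The paper supplies it by a third induction inside the one on $b$ (\isn{vk\_Lgen}). This is a strong induction on rank with motive $\mathrm{PA}(x,s)$, ``every valid principal code below $x=\varphi(a,b)$ of rank $s$ is accessible''. It comes with a helper: every valid $e<x$ of smaller rank is accessible (zero outright, principal by the side hypothesis, a sum through its head and \isn{vk\_K}). Since $b'$ is a field of the predecessor, it has smaller rank and lies below $x$. The helper then yields $\Acc\cdot b'$, and only then does your elimination of $c(a')$ go through.

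Some smaller points:

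\begin{itemize}
\item The rule is Park's weak form, so the closure premise of your induction on $b$ does not give accessibility of the current point $z$. The predecessors lying below $z$ need $\Acc\cdot z$, which is why the paper's motive there is $\mathrm{mL}(c,z)=\Acc\cdot z\wedge\mathrm{RC}(c,z)$.
\item A motive about the one code $\varphi(a,b)$ that a builder produces does not reach $\WF$ on all codes. Ranks are stored in the code, so many valid principal codes share the fields $(a,b)$. Transferring accessibility from the builder's code to those would need its own induction on the comparison. The paper's $\mathrm{RC}(c,d)$ avoids this by quantifying over every valid principal code with fields $c$ and $d$.
\item Invalid codes need their own, vacuous, case in your final rank induction.
\item The admissibility hypothesis quantifies over every well-formed context $\Delta$. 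It is therefore applied directly, as a fact about \iga's derivability, at the node where the disjunction is needed. Neither closure under weakening nor an induction on derivation trees is involved.
\item Sums need no bivalence: \isn{vk\_K} makes every valid sum over an accessible, valid, principal head accessible by ground induction on the rest and a rank induction.
\end{itemize}
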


Whether \iga proves the Veblen notation well-founded is open, and no
upper bound on \iga is claimed; what the theorem fixes is that
admitting the rule is the same unknown.  The rule itself has a
classical warrant.  The Veblen notation is well-founded in HOL, by an accessible-part
induction on the notation's clauses, so the not-refuted relation of
the Veblen order is well-founded, and by \cref{thm:biv-wf} below every
instance of bivalence of accessibility for that order is sound for
the omega model.  The paper calls this HOL fact the order's
\emph{classical warrant}.  The commitment is
sound to take; what is open is whether \iga has already taken it.  The
left-to-right direction is short: from $\WF$ for the whole order,
every accessibility claim $\Acc \cdot d$ at a number is derivable
outright, and the disjunction follows by weakening and cut.  The right-to-left direction carries the content.  It is the
inductive-definitions proof of well-foundedness below $\Gamma_0$
carried out in \iga with the admitted rule supplying, at each first
argument, the implication from accessibility of the argument to
accessibility of the value (Appendix~\ref{app:gamma0}).  Without the
rule, the induction hypothesis for a smaller first argument has to be
applied at a second argument whose accessibility is established only
inside the derivation, and no term expresses that conditional
accessibility: a premise may be a sequent, but a motive must be a
term (\S\ref{sec:induction}).  By the exactness of the deduction
theorem (\S\ref{sec:bg}), admitting the rule is the same as deriving,
at every number in every context, the instance $\Acc\cdot d \vee
\neg(\Acc\cdot d)$ of excluded middle, so the theorem can also be read
at the level of sentences: \iga proves the Veblen notation
well-founded exactly when it proves excluded middle for its
accessibility predicate.

\subsection{What bivalence costs}

The equivalence would be idle if the bivalence rule were a harmless
addition.  It is not, and its cost can be stated.

\begin{thm}[\isn{accbiv\_omega\_sound\_iff\_wf}]
\label{thm:biv-wf}
For a closed, well-formed order $R$, every instance of the bivalence
rule for $\Acc_R$ is sound for the omega model if and only if the
not-refuted relation of $R$ is well-founded in HOL.
\end{thm}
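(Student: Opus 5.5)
Both directions go through the omega-model reading of $\Acc_R$ from \cref{thm:acc-sem}. The first step is to fix what soundness of a rule instance means. Given a well-formed $\Delta$ whose hypotheses are omega-true, a $d$ with $\Delta\ent d\,\mathbf N$, and a well-formed $X$, suppose every omega-true extension of $\Delta$ by $\Acc_R\cdot d$ makes $X$ omega-true. The instance is sound when, under that assumption, $\neg(\Acc_R\cdot d)\vee X$ is omega-true. The analysis is at the level of closed instances. By \cref{thm:sound}, $d$ has some value $\bar n$ in the omega model, and by conversion we may replace $d$ by $\bar n$.

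For the direction from well-foundedness to soundness, assume the not-refuted relation of $R$ is well-founded in HOL. Then every $n$ lies in its accessible part, so by \cref{thm:acc-sem} $\Acc_R\cdot\bar n$ is omega-true for every $n$. The premise, read semantically, therefore yields $X$ true, and the disjunction is true through its right disjunct. If the premise is taken as a derivability statement rather than a semantic entailment, we first apply \cref{thm:sound} to the premise derivation in the context $\Delta$ extended by the now omega-true hypothesis $\Acc_R\cdot\bar n$. Either way this direction is routine.

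For the converse, suppose the relation is not well-founded. Then some $n$ lies outside the accessible part, and by \cref{thm:acc-sem} $\Acc_R\cdot\bar n$ is not true. It is also never false, so it has no value. Take the instance with $\Delta$ empty, $d=\bar n$, and $X=\Acc_R\cdot\bar n$. The premise $\Acc_R\cdot\bar n\ent\Acc_R\cdot\bar n$ is an axiom, so it holds both as a derivation and semantically. The conclusion $\neg(\Acc_R\cdot\bar n)\vee\Acc_R\cdot\bar n$ has no value, because both disjuncts lack a value and a disjunction is true only if some disjunct is. So this instance is unsound, which gives the contrapositive.

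I expect the main obstacle to be bookkeeping rather than mathematics. The soundness notion has to match the one in the Isabelle statement, that is, whether it quantifies over all $\Delta$ with omega-true hypotheses, and whether $d$ may be an open term whose value depends on the instance. I would state soundness for closed instantiations, reduce $d$ to its numeral value using determinacy of omega values and the invariance of omega truth under conversion, and invoke \cref{thm:acc-sem} only at numerals. The ``never false'' half of \cref{thm:acc-sem} is exactly what the converse direction needs.
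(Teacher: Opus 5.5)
Your proof is correct and is the paper's argument. The soundness-to-well-foundedness direction goes by contraposition with the empty context, $d=\bar n$ at an inaccessible $n$ and $X=\Acc_R\cdot\bar n$, where \cref{thm:acc-sem} makes the conclusion excluded middle at a valueless atom; the converse holds because every accessibility claim is then true and the right disjunct carries the conclusion. One small correction: replacing a general $d$ by its numeral value $\bar n$ goes through value congruence (the \isn{ovcr} lemmas of the soundness proof), not conversion or determinacy, since a term with an omega value need not head-reduce to that numeral.
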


For the forward direction, argue by contraposition with the
smallest possible instance.  Take
an inaccessible point $n$, the empty context, and $X$ the
accessibility claim itself.  The premise is then a hypothesis, and the
conclusion is excluded middle for accessibility at $n$.
\cref{thm:acc-sem} refutes it: accessibility is never false, and at
$n$ it is not true.  The backward direction is immediate: when every
point is accessible, every accessibility claim is true and the right
disjunct carries the conclusion.

Read together, the two theorems say: \iga proves the Veblen notation
well-founded exactly when it admits bivalence of accessibility for
it, and taking bivalence of accessibility as a rule is sound exactly
when the order is well-founded.  \iga never refutes the commitment (\cref{thm:acc-sem}); it is sound
exactly when it is true; and the same shape is inconsistent at the
Liar (\cref{thm:biv-liar} below).  The classical
logician has the commitment built into the logic, as excluded middle
for every sentence.  The grounded logician can take it, order by
order, with the price written on it.

\subsection{One rule, or one claim}
\label{sec:one-rule}

Two further forms of the purchase turn the admissibility statement
into theorems about systems.

The first adds the rule.  Let $\iga+\mathrm{AccBiv}_\varphi$ be \iga
with bivalence of accessibility for the Veblen order as a primitive
rule.

\begin{thm}[\isn{giv\_WF\_gamma0\_price}]
\label{thm:price}
$\iga+\mathrm{AccBiv}_\varphi$ proves the Veblen notation
well-founded; it is sound for the omega model; and it is consistent.
\end{thm}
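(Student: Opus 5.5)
The plan has three parts: derivability, soundness, and consistency, and they go in that order.

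\emph{Derivability.} In $\iga+\mathrm{AccBiv}_\varphi$ the bivalence rule for the Veblen order is primitive, so it is trivially admissible for the extended system's own derivability. The work is then to rerun the right-to-left direction of \cref{thm:gamma0} inside the extended system. That derivation is the inductive-definitions proof below $\Gamma_0$ from Appendix~\ref{app:gamma0}. It uses admissibility only as a black box, turning each derivation $\Acc\cdot d,\Delta\ent X$ into $\Delta\ent\neg(\Acc\cdot d)\vee X$. Everything else in it consists of \iga rules and derived rules, including ground induction and the strong form of fixed-point induction, and all of these carry over to the extension verbatim.

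So the concrete step is to generalize that derivation. I would prove it once for an arbitrary entailment relation that extends \iga's rules and is closed under the bivalence rule for the Veblen order. I would then instantiate it twice: at \iga under the admissibility hypothesis, and at $\iga+\mathrm{AccBiv}_\varphi$ directly. The obstacle here is bookkeeping. Every derived rule the Veblen proof calls, such as the ordinal kit, the rank inductions and the unfolding laws, has to be available in the extended system. The cleanest way to get this is to state those lemmas over any relation closed under \iga's rules, and I would check whether the development already does so.

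\emph{Soundness.} I would prove soundness for the omega model by the same rule induction as \cref{thm:sound}. The forty-three \iga cases replay unchanged, with the ground-induction case handled by \cref{lem:gfix-ind}. One new case remains: the bivalence rule. By the classical warrant, the Veblen notation is well-founded in HOL by an accessible-part induction on its clauses, so the not-refuted relation of the Veblen order is well-founded. \cref{thm:biv-wf} then says that every instance of the bivalence rule is sound for the omega model. That is exactly what the new case needs: if the premise's hypotheses being omega-true forces $X$ to be true, then the conclusion is true. This holds because every $\Acc\cdot d$ at a number is omega-true (\cref{thm:acc-sem}), which discharges the hypothesis. The point needing care is the side condition $\Delta\ent d\,\mathbf N$. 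By the induction hypothesis, $d$ has a numeral value, and I need a small transport lemma saying that $\Acc\cdot d$ and $\Acc\cdot\bar n$ agree in the omega model. I would get it from the unfolding at a term that reaches $\bar n$, which \cref{thm:acc-sem}'s never-false proof already handles for arbitrary terms.

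\emph{Consistency.} Consistency is immediate from soundness. $0=1$ is not omega-true, since both sides reach distinct numerals, so the extended system does not derive it from the empty context.

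I expect the main difficulty to be the first part: factoring the Veblen derivation so that it runs in both systems. The soundness step reduces to \cref{thm:biv-wf} together with the classical warrant.
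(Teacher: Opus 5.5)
Your proposal is correct and is essentially the paper's proof. The well-foundedness derivation is the right-to-left direction of \cref{thm:gamma0} replayed in an abstract calculus with the bivalence rule present rather than admitted; soundness is \iga's rule induction with one new case that the classical warrant discharges through \cref{thm:biv-wf}; and consistency follows as for \iga.

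The only slip is cosmetic. The paper takes the new case to be exactly the instance-soundness statement of \cref{thm:biv-wf}, so your transport from $\Acc\cdot\bar n$ to $\Acc\cdot d$ is absorbed there. That transport is also a value-congruence fact, as in the ground-induction case of Appendix~\ref{app:gfix}, not something the never-false argument supplies.
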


So $\Gamma_0$ lies at most one rule away, and the rule is sound to add
by the classical warrant above.  The theorem is proved in an abstract
calculus that takes the rule as an assumption and carries the
ground-induction development of \S\ref{sec:wf} to any system in which
the rule is present (Appendix~\ref{app:gamma0}); the combined systems
built that way are the subject of forthcoming work.

The second replaces the rule by a claim.  The development defines a
form of Kleene's $\mathcal O$, the constructive ordinal notations, as
one positive fixed point of \iga on tagged codes (zero, successor,
and limit of a primitive recursive sequence, where Kleene's own
notations allow any computable one), so that induction along
$\mathcal O$ is an instance of ground induction; that development,
too, belongs to forthcoming work, and only one of its theorems is
needed here.  Let $\hat\gamma$ be the code in $\mathcal O$ of the
limit of the chain
$g_0 = 0$, $g_{n+1} = g_n + \omega^{\gamma_n}$, where $\gamma_n$ runs
through the iterates $0,\ 1,\ \eps,\ \varphi(\eps,0),\ \ldots$ of
$\varphi(\cdot,0)$; the supremum of the $\varphi(n,0)$ is only
$\varphi(\omega,0)$, so the iterates, not the $\varphi(n,0)$, are the
right sequence.

\begin{thm}[\isn{o\_wf\_of\_gamma0}, \isn{gamma0\_triangle}]
\label{thm:triangle}
From the hypothesis that $\hat\gamma$ is a notation, \iga derives $\WF$
for the Veblen order.  Consequently \iga proves the Veblen order
well-founded if and only if it admits bivalence of accessibility for
that order, if and only if it proves $\hat\gamma$ a notation.
\end{thm}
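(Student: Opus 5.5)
The proof has two parts: first derive $\WF$ for the Veblen order from the hypothesis that $\hat\gamma$ is a notation, then close the triangle using \cref{thm:gamma0}.

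\emph{Main derivation.} I would apply ground induction along the $\mathcal O$ fixed point at the point $\hat\gamma$. The obvious motive would be ``every Veblen code whose value lies below the ordinal coded by $x$ is accessible''. That motive needs a value map from $\mathcal O$-codes to ordinals, so I would instead use a relative motive built on primitive recursive comparison. The motive is
\[
  \cA\cdot\lambda d.\ \neg(\mathrm{below}(d,x))\vee \Acc_\varphi\cdot d ,
\]
where $\mathrm{below}(d,x)$ is a decided primitive recursive test. On a zero code it is false. On a successor $s(y)$ it means below or equal to $y$'s bound. On a limit $\ell(f)$ it means ``$d$ is below the bound of some $f(k)$ with $k\le$ a primitive recursive witness extracted from $d$'', for instance the rank or nesting depth of $\varphi$-symbols in $d$.

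Because the guard is decided, the motive is a legitimate $\neg A\vee B$ term. The closure cases are as follows.
\begin{itemize}
\item Zero is vacuous.
\item Successor adds one new top element. Its accessibility follows by unfolding $\Acc_\varphi$ once and splitting on the decided comparison.
\item Limit reduces to the induction hypothesis at $f(k)$ for the witnessed $k$, via a bounded search over the guard.
\end{itemize}
Instantiating at $\hat\gamma$ makes every code satisfying $\mathrm{below}(d,\hat\gamma)$ accessible.

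\emph{Covering lemma.} It remains to show in \rga, at an open code $d$, that $\mathrm{below}(d,\hat\gamma)$ holds for every code $d$. Every Veblen term of nesting depth $k$ is below $\gamma_{k+1}$, hence below $g_{k+2}$ in the chain $g_{n+1}=g_n+\omega^{\gamma_n}$. This is a strong induction on code rank using the decided Veblen comparison tree, with the iterates $0,1,\eps,\varphi(\eps,0),\ldots$ computed by a primitive recursive sequence whose members are themselves canonical Veblen codes. Then $\WF$ follows by universal introduction.

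\emph{Triangle.} Proving $\hat\gamma$ a notation implies $\WF$ by cut with the derivation above. $\WF$ implies the bivalence rule is admissible by \cref{thm:gamma0}. For the last arrow, from $\WF$ I would derive the $\mathcal O$ membership of $\hat\gamma$. The limit clause needs each $g_n$ to be a notation, proved by number induction on $n$ with unfolding of the $\mathcal O$ fixed point. The successor and sum steps $g_n+\omega^{\gamma_n}$ are themselves limits in $\mathcal O$, so they require an internal code-level construction of $\omega^\alpha$ in $\mathcal O$, done by induction along the now well-founded Veblen order. Accessibility of $\gamma_n$ is exactly what is available there. Alternatively, this step may not need $\WF$ at all if $\hat\gamma$'s membership is provable outright, though I expect not.

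\emph{Main obstacle.} The hardest part is the limit case of the first induction: making the witness $k$ primitive recursive and proving the coherence that $\mathrm{below}(d,f(k))$ implies $\mathrm{below}(d,\ell(f))$, without any implication at undecided antecedents. I would first try to make $\mathrm{below}$ depend only on the comparison tree and the nesting depth, so that every case split stays decided.
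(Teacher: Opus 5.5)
Your triangle is assembled the way the paper does it. The first equivalence is \cref{thm:gamma0}. Membership of $\hat\gamma$ follows from $\WF$ by a number induction on the partial sums $g_n$, whose step uses transfinite induction along the Veblen order. The converse is the one-claim derivation at the empty context.

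\textbf{The gap is in the one-claim derivation.} The decided primitive recursive test $\mathrm{below}(d,x)$ that your motive rests on cannot be defined as you describe.
\begin{itemize}
\item \emph{Successor clause.} It needs ``$y$'s bound'', a Veblen code for the ordinal of the $\mathcal O$-code $y$. When $y$ is a limit code, that ordinal is the supremum of a sequence given only by an index tree. In general no primitive recursive function of $y$ produces a Veblen code for it. This is exactly the value map you set out to avoid. Without it, the ``new top element'' of $s(\ell(f))$ has no name, so its accessibility cannot be obtained by one unfolding.
\item \emph{Limit clause.} The clause $\exists k\le w(d).\,\mathrm{below}(d,f(k))$ recurses into the computed codes $f(k)$, which are not numerically smaller than $\ell(f)$. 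It is therefore a recursion along the $\mathcal O$-tree, and its termination at a code is a well-foundedness fact about that code; it is not a primitive recursion.
\item \emph{Consequence for the closure.} Once $\mathrm{below}$ is not decided, you cannot split on $\neg\mathrm{below}(d,x)\vee\Acc\cdot d$, and the closure cases fail.
\item \emph{Covering lemma.} It inherits the same problem. Proving $\mathrm{below}(d,\hat\gamma)$ at an open $d$ means unfolding that recursion through the tree of $\hat\gamma$, and a rank induction on $d$ does not do this.
\end{itemize}
Under your definition the limit closure is in fact immediate. The coherence you flag as the main obstacle matters only inside the covering lemma, where the real obstruction is the one above.

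\textbf{The missing idea is to read a Veblen code off the point instead of computing one.} The $\mathcal O$-codes that matter are codes of $x+\omega^a$ with $a$ a Veblen code, and a decided equality recognizes them. The paper's motive has two conjuncts:
\begin{itemize}
\item whenever the point is the code of $x+\omega^a$ with $a$ canonical and valid, $\mathrm{DenV}(a)$ holds, meaning every valid code whose canonical form lies below $a$ is accessible;
\item the point is not $\hat\gamma$, or $\mathrm{DenV}(\gamma_n)$ holds for every $n$.
\end{itemize}
The motive asks for accessibility only below the \emph{exponent}. So zero and successor points are trivial (their exponent is $0$), and no top element is ever needed.

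At a limit point coding $x+\omega^a$, graph functionality identifies the $(k+1)$-th sequence value as the code of $x_k+\omega^{a[k]}$. The hypothesis then gives $\mathrm{DenV}(a[k])$, and the internal cofinality of the fundamental sequence (\isn{vfs\_cofinal}) passes it to $\mathrm{DenV}(a)$. At $\hat\gamma$ the second conjunct yields $\mathrm{DenV}(\gamma_n)$ for all $n$. The bound $c<\gamma_{c+1}$ on canonical forms (\isn{gm\_bound}, which is your covering lemma in its usable form) then finishes $\WF$. This is also why $\hat\gamma$ is coded as the limit of the partial sums $g_n$ rather than of the $\gamma_n$: the exponents $\gamma_n$ are where accessibility is collected.
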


The first sentence is a grounded claim, a hypothetical derivation.
Its inductions are induction along $\mathcal O$ at the claimed code
and ground induction on the numbers, and nothing else; a build-time
check over the transitive dependencies of the theorem, by names and
statement shapes, gives evidence that no induction along the Veblen
order, and no well-foundedness, transfinite induction or bivalence
fact about it, is used, and a reading of the proofs confirms it
(Appendix~\ref{app:gamma0}).  The other direction of the second
equivalence derives the claim from $\WF$ by transfinite induction
along the order.  The second sentence is classical.  As extensions of
\iga, adopting the sentence $\WF$ for the Veblen order and adopting
the claim are interderivable, and the adoption of the claim is sound
for the omega model and consistent, the classical warrant entering at
exactly one place, the omega-truth of the well-foundedness sentence.
So the commitment can be made in three forms, a sentence, a rule, or
a claim, and the three are interderivable, with the same classical
warrant behind each.

The same three-way equivalence holds one landmark higher, for
Buchholz's notation for the Bachmann--Howard ordinal, with its own classical warrant.  The triangle is why the opening of
this section separates the boundary found here from the boundary of
predicativity; the triangle and the combined systems are the subject
of forthcoming work, and this paper's own claims stop at $\Gamma_0$.

\subsection{Where the lower bound sits}
\label{sec:strength}

Below the equivalence, the development's route reaches
$\varphi(n,0)$ for every finite $n$ and no further
(\cref{thm:veblen}).  The limit at $\omega$ in the first argument is
the first place where the closure of $\varphi(a,\cdot)$ is needed
uniformly over infinitely many smaller first arguments, and it meets
the obligation above.  So the lower bound $\varphi(\omega,0)$ is
likely sharp for this route, though no upper bound on \iga is
claimed.  The classical landmarks put the bound in perspective.
Kripke--Feferman truth has the ordinal $\varphi(\eps,0)$ of ramified
analysis below $\eps$; its paracomplete sibling PKF has the ordinal
$\varphi(\omega^\omega,0)$ of ramified analysis below
$\omega^\omega$~\cite{halbach06axiomatizing,halbach18costs}; and
Kripke--Feferman with Cantini's minimality principle, of which ground
induction is the grounded form, reaches the strength of one inductive
definition~\cite{cantini89notes,burgess14friedman}.  The lower bound
proved here lies below all three, PKF included.  Whether \iga
interprets PKF, and where \iga's ordinal lies, are questions for
ordinal analysis; the open questions of \S\ref{sec:concl} return to them.

\subsection{Conditional proof is inconsistent}
\label{sec:classical}

The deduction theorem of \rga holds exactly at decided hypotheses
(\S\ref{sec:bg}).  One might hope to extend \rga by a new conditional
with conditional proof at every hypothesis.  Curry's
paradox~\cite{curry42inconsistency} says no, and the fixed-point former
makes the argument five lines.

\begin{thm}[\isn{curry\_explosion}, \isn{ext\_curry\_explosion}]
\label{thm:curry}
Let $\Rightarrow$ be any term former over \rga's terms that preserves
well-formedness, respects substitution, and satisfies modus ponens
and conditional proof at arbitrary well-formed hypotheses, as derived
or admissible rules of \rga.  Then $\ent 0 = 1$, and since \rga is
consistent, no such former exists.  More generally, any proof system
over \rga's terms with hypothesis, head-step conversion in both
directions, modus ponens, and conditional proof for such a former
derives $0=1$, whatever else it validates.
\end{thm}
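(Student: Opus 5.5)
The plan is the standard Curry construction, built with \rga's own fixed-point former so that no separate diagonal lemma is needed. Take the body $\lambda x.\,(x \Rightarrow (0=1))$ and set $C \equiv \fix(\lambda x.\,(x \Rightarrow (0=1)))$. Since $\fix f$ head-reduces to $f(\fix f)$ in one step, one head step takes $C$ to $C \Rightarrow (0=1)$. The hypothesis that $\Rightarrow$ respects substitution is exactly what makes this reduct literally the term $C \Rightarrow (0=1)$, with $C$ in the hole. The hypothesis that it preserves well-formedness keeps $C$ well-formed, so the rules of the statement apply to it. By the conversion rule, which works in both directions, $\Gamma \ent C$ and $\Gamma \ent C \Rightarrow (0=1)$ are then interderivable in any context.

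The derivation then has five steps. First, $C \ent C$ by hypothesis. Second, converting the hypothesised $C$ forward gives $C \ent C \Rightarrow (0=1)$. Third, modus ponens on these two gives $C \ent 0=1$. Fourth, conditional proof at the hypothesis $C$, which is well-formed but has no value and so is not provably decided, gives $\ent C \Rightarrow (0=1)$. Fifth, converting backwards gives $\ent C$, and modus ponens with the fourth step gives $\ent 0 = 1$.

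The only step that uses the offending rule is the fourth. By the exactness of the deduction theorem (\S\ref{sec:bg}), it is precisely the step \rga refuses, since $C$ is not provably decided.

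For the first claim, suppose the rules are derived or admissible in \rga. Then every step above is a derivation in \rga itself, so \rga proves $0=1$. This contradicts \rga's consistency, which holds because \rga is omega-sound (\cref{thm:sound} restricted to \rga's rules, or directly from \cite{rga-paper}). So no such former exists.

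For the general claim, the derivation uses only four ingredients: hypothesis, head-step conversion in both directions, modus ponens and conditional proof. It therefore replays verbatim in any proof system that has those, whatever else that system validates. In the mechanization this means stating the argument once, over an abstract entailment relation closed under those four rules, and instantiating it twice: at \rga's derivability with $\Rightarrow$ admissible, and at an arbitrary extension.

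The main obstacle is bookkeeping rather than mathematics. ``Respects substitution'' has to be stated in de~Bruijn form, with lifting under the $\lambda$ of $\fix$, so that the head step of $C$ lands syntactically on $C \Rightarrow (0=1)$ rather than on some variant of it. I would first try to phrase that hypothesis as the single equation $\mathrm{subst}(a \Rightarrow b) = \mathrm{subst}\,a \Rightarrow \mathrm{subst}\,b$ together with commutation with lifting. With that equation in hand, the unfolding step is one conversion.
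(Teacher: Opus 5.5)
Your proposal is correct and follows the paper's proof step for step. You use the Curry term $C=\fix(\lambda x.\,x\Rightarrow 0=1)$, then hypothesis, forward conversion, modus ponens in the shared context $\{C\}$, conditional proof, backward conversion and modus ponens, and you replay the same argument over an arbitrary entailment relation for the general form; your appeal to omega-soundness to exclude $\ent 0=1$ works equally well in place of the paper's use of \rga's consistency together with $\ent\neg(0=1)$.

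One small correction: the unfolding takes two head steps, not one. The $\fix$ step yields $(\lambda x.\,x\Rightarrow 0=1)\cdot C$, and only the following beta step, using substitution-respect at the binder's variable, reaches $C\Rightarrow 0=1$. Commutation with lifting is not needed, because the $\fix$ step relies only on the generic fact that substituting into a lifted term returns that term.
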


The Curry term $C = \fix(\lambda x.\, x \Rightarrow 0=1)$ unfolds to
$C \Rightarrow 0=1$.  From the hypothesis $C$, conversion gives $C
\Rightarrow 0=1$ and modus ponens gives $0=1$, using the hypothesis
twice; contraction is free because hypotheses form a set.  Conditional
proof discharges it: $\ent C \Rightarrow 0=1$; conversion backwards
gives $\ent C$; modus ponens gives $\ent 0=1$.  So the restriction of
the deduction theorem to decided hypotheses is forced by the presence
of the fixed-point former, and the exactness theorem of the G\"odel
paper tells how far it reaches.  A different argument, through the Liar with disjunction
elimination and double-negation elimination, shows the same for the definable implication $\neg a \vee b$.

The bivalence rule above is the deduction theorem at one particular
undecided hypothesis, and the Liar shows why it cannot be general:

\begin{thm}[\isn{accbiv\_shape\_explodes\_on\_liar}]
\label{thm:biv-liar}
\rga does not admit the bivalence rule with the Liar in place of
accessibility: if \rga's derivability were closed under it, \rga
would derive $0=1$.
\end{thm}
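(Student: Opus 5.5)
The plan is to instantiate the bivalence rule at the Liar $L = \fix(\lambda x.\,\neg x)$. We take the empty context and let $X$ be the Liar itself, so that the premise $L \ent L$ is an instance of the hypothesis rule. The one premise of the bivalence rule that \rga does not supply here is the number side condition. It is vacuous for the propositional Liar, which takes no point. If one insists on the unary form $\fix(\lambda a.\,\lambda x.\,\neg(a\cdot x))$ applied to a numeral $\bar 0$, then $\bar 0\,\mathbf N$ is an axiom. So closure of \rga's derivability under the rule gives $\ent \neg L \vee L$, excluded middle for the Liar.

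From this disjunction we derive $0=1$ by disjunction elimination, a rule of \rga, splitting into the two cases $\neg L$ and $L$.
\begin{itemize}
\item In the case $\neg L$, the hypothesis is $\neg L$. The Liar head-reduces in one step to $\neg L$, so backward conversion from the hypothesis gives $L$. We now have both $L$ and $\neg L$ as consequences of one hypothesis. \rga's explosion principle from a term and its negation then yields $0=1$.
\item In the case $L$, forward conversion turns the hypothesis $L$ into $\neg L$, and the same explosion step gives $0=1$.
\end{itemize}
Combining the two cases, $\ent 0=1$.

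Equivalently, one can shortcut the case split. By conversion, $\neg L \vee L$ and $L \vee L$ are interderivable, and idempotence of disjunction elimination gives $\ent L$, then $\ent \neg L$ by conversion, and explosion.

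The main obstacle is checking that \rga's disjunction elimination and explosion rules apply with no decidedness side conditions. In a paracomplete system some elimination rules may carry a $\mathbf B$ premise. If explosion requires one, I would instead use the fact, already recorded from the \rga paper in \S\ref{sec:bg}, that a sentence with no value is neither provable nor refutable. \rga being complete for its certificate semantics, that fact together with $\ent L$ contradicts \rga's consistency. This argument works at the level of the derivability relation, which is exactly what the statement concerns.
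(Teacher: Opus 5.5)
Your proof is correct and is essentially the paper's: the paper instantiates the rule at the empty context with $X = \neg L$, whose premise $L \vdash \neg L$ comes from the hypothesis rule plus conversion. It then collapses $\vdash \neg L \vee \neg L$ to $\vdash \neg L$, converts back to $\vdash L$, and explodes; your choice $X = L$ instead yields excluded middle for the Liar and so needs a genuine two-case split, a cosmetic difference. Two small corrections: $\neg L \vee L$ and $L \vee L$ are not interderivable by conversion alone, since a disjunction is not a head redex and your shortcut needs disjunction elimination with conversion in each branch, and the fallback is unnecessary because the paper closes with \rga's negation elimination directly.
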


Accessibility survives where the Liar does not because accessibility
is never refutable (\cref{thm:acc-sem}), so the rule's conclusion
$\neg \Acc\cdot d \vee X$ is never a contradiction waiting to happen.
The rule is also harmless at any provably decided hypothesis, where
it is \rga's own deduction theorem.  The theorem covers the Liar; it says nothing about every other
undecided term.

\subsection{No internal soundness proof of \rga}

Could \iga prove \rga's consistency?  The obvious route is an
internal soundness proof: assign to each node of a proof the motive
``if the hypotheses are true then the conclusion is true'', with the
truth term of \S\ref{sec:truth} supplying truth, and prove it by
induction on the proof.  At a node whose hypothesis has no value, a
hypothesis such as a divergent term introduced by disjunction
elimination, that motive has no value, and no omega-sound system
proves it.  The development accompanying the G\"odel paper~\cite{ford-conrga}
has made this a theorem, in an abstract calculus of omega-sound
extensions of \rga: inside it, the motive of the internal soundness
argument is not derivable at the hypothesis node of an explicit
five-node proof, and neither is any of the weaker motives that the
argument's cases could accept; the content of the theorem is
semantic, that the motive is not omega-true there.  \cref{thm:sound} is exactly the calculus' assumption for \iga, so its
theorems apply to \iga, and the instantiation is mechanized in a
session joining the two developments: the motive and the weaker
motives are underivable in \iga from omega-true hypotheses
(\isn{iga\_not\_motive}, \isn{iga\_not\_fold\_residual}).  The theorems cover one
argument and the weakenings of its shape, not every conceivable
internal argument.

\cref{thm:curry} sharpens the reason.  The motive needs conditional
proof at an ungrounded hypothesis, and conditional proof at
ungrounded hypotheses is inconsistent, not merely unsound.
Conditionals that validate ``$A$ implies $A$'' for a valueless $A$
do exist.  Field's conditional~\cite{field08saving} keeps structural
contraction and gives up conditional proof, which is the restriction \rga already
has.  The Łukasiewicz conditional keeps conditional proof and gives
up contraction, the rule that lets a hypothesis be used twice;
arithmetic with the unrestricted truth schema over Łukasiewicz logic
is consistent but $\omega$-inconsistent, proving every instance of
some universal together with the universal's
negation~\cite{restall92arithmetic,hajek00liar}.  But an induction over proof codes reuses its hypothesis at every
node, which is contraction; so, heuristically, a conditional that
survives Curry by dropping contraction cannot carry the induction
either.  The conjecture that
follows, that no consistent grounded extension of \rga proves \rga's
consistency by an internal soundness argument, is stated among the open questions of \S\ref{sec:concl}.  What remains open is the other classical route,
Gentzen's~\cite{gentzen36widerspruchsfreiheit}: transform proofs into
a fragment where every motive is decided, with an ordinal assignment,
and let \iga supply the descent principle.  That is where transfinite
induction along a notation is the right tool, and the ordinal
assignment for \rga's proofs is the open part.

\section{Related Work}
\label{sec:related}

\paragraph{Theories of truth.}
Kripke's fixed-point construction~\cite{kripke75outline} is the
semantics; Feferman's Kripke--Feferman axioms~\cite{feferman91reflecting}
are its classical axiomatization, and Cantini's minimality
principle~\cite{cantini89notes,cantini96logical} is the induction
schema for the truth predicate that ground induction generalizes to
every positive body, and Burgess's analysis of its
strength~\cite{burgess14friedman} the classical calibration.  Halbach and Horsten's paracomplete
axiomatization~\cite{halbach06axiomatizing} and Halbach and Nicolai's
analysis of its cost~\cite{halbach18costs,nicolai18provably} are the
closest relatives of \iga's truth term and the source of the
comparison in \S\ref{sec:truth}.  Field's theory~\cite{field08saving}
and the Łukasiewicz-based theories~\cite{restall92arithmetic,hajek00liar}
are the conditionals that the inconsistency result of \S\ref{sec:classical} rules out for the present purpose; Burgess's $\Pi^1_1$-completeness of Kripke's least
fixed point~\cite{burgess86truth} is the classical measure of the class of inductive definitions of \S\ref{sec:induction}.  Halbach's survey~\cite{halbach11axiomatic} covers
the landscape.

\paragraph{Inductive definitions and fixed-point induction.}
Park's fixed-point induction~\cite{park69fixpoint} is the rule in its
original program-logic form, and Kozen's $\mu$-calculus~\cite{kozen83results}
its modal form; the least-fixed-point reading of recursive predicates
is the standard semantics of logic programs with negation, in
Fitting's Kripke--Kleene semantics~\cite{fitting85kripke} and the
well-founded semantics~\cite{vangelder91wellfounded}, both of which
are three-valued fixed points of the present kind.  Feferman's
theories of iterated inductive definitions~\cite{feferman70formal} and
the Buchholz--Feferman--Pohlers--Sieg volume~\cite{buchholz81iterated}
are the proof-theoretic setting; the Kleene--Spector
theorem~\cite{kleene55forms,spector61inductively} identifies the least
fixed points of positive arithmetic operators with the $\Pi^1_1$ sets.
Proof assistants' inductive definitions, with their positivity
requirement and generated induction principles, are the everyday form
of the same idea, and the closest analogy for a reader coming from
that side.  Moschovakis's monograph on positive inductive definitions over
abstract structures~\cite{moschovakis74elementary} is the natural
home of the oracle-reading operator and of the class question, and
Aczel's Frege structures~\cite{aczel80frege} and the applicative
theories of Feferman and J\"ager~\cite{feferman75language} are the
closest classical settings in which self-application, fixed points
and inductive generation coexist.  Goodstein's theorem, the hydra
game, and ordinal notations have been mechanized before:
Cast\'eran's hydra battles in Rocq~\cite{casteran22hydra} and the
ordinal libraries of Isabelle's Archive of Formal
Proofs~\cite{paulson22ordinal}.  Those developments prove the theorems in the
assistant's own classical or constructive logic; here the assistant
checks that a weaker object system proves them.

\paragraph{Ordinals and predicativity.}
The autonomous progressions of Feferman and
Sch\"utte~\cite{feferman62transfinite,feferman64systems,schutte65predikative},
which admit induction along an order only after proving it
well-founded, and Kreisel's analysis of ordinal
logics~\cite{kreisel58ordinal,turing39ordinals} place $\Gamma_0$ as
the limit of predicativity; Feferman's reflective closure of Peano
Arithmetic has the same ordinal~\cite{feferman91reflecting}.
\S\ref{sec:gamma0}, on the price of the classical world, states the grounded form of the step past it as an equivalence.  Reverse mathematics supplies the yardsticks for the
well-foundedness transfers of \S\ref{sec:wf}: ``$X$ well-ordered
implies $\omega^X$ well-ordered'' is equivalent to arithmetic
comprehension~\cite{girard87proof,hirst94reverse}, the $\varepsilon$
step to its $\omega$-jump strengthening, and the $\varphi(X,0)$ step
to arithmetic transfinite
recursion~\cite{rathjen93veblen,marcone11veblen}; which of these a
grounded system can state and prove is a natural next measurement.  Kreisel's warning that ordinal measures are meaningful only for
natural notations~\cite{kreisel58ordinal}, and Harrison's
pseudo-well-orderings~\cite{harrison68recursive}, bear on what a
grounded system can and cannot certify about a notation
(\S\ref{sec:wf}); Kreisel and L\'evy's equivalence of uniform
reflection with transfinite induction to
$\eps$~\cite{kreisel68reflection} is the classical comparison for the
reflection rule that a sibling system of \iga adds.  Gentzen's consistency proof~\cite{gentzen36widerspruchsfreiheit}
is the route the no-go result of \S\ref{sec:classical} leaves open.

\paragraph{The grounded family.}
\rga~\cite{rga-paper} is the base; the companion paper on G\"odel's
theorems in \rga~\cite{ford-conrga} supplies the exact deduction
theorem, the internal-soundness no-go theorems cited in
\S\ref{sec:classical}, and the omega-model soundness machinery that the soundness proof of \S\ref{sec:induction} reuses; Omega Grounded Arithmetic (\oga)~\cite{oga-paper} is the sibling
system: it adds a rule certifying that a universal over a decided
matrix is itself decided, which iterated up the quantifier hierarchy
decides every arithmetic sentence, so that \oga contains Peano
Arithmetic under the identity translation modulo the number
hypotheses on its variables, a theorem of the \oga development whose
home is the \oga paper.  Ground induction buys inductive definitions
and ordinals, decidedness buys classical arithmetic; whether either
extension contains the other is open.  Grounded
deduction's
philosophy~\cite{ford24reasoning,ford25have} is the stance behind the
viewpoint labels.  Martin-L\"of type theory~\cite{martin-lof72intuitionistic,martin-lof80intuitionistic}
and Aczel's constructive set theory~\cite{aczelrathjen01czf} are the
constructive systems with which a strength comparison would be most
informative; the open questions of \S\ref{sec:concl} list it.

\section{Conclusion and Open Questions}
\label{sec:concl}

One rule, and nothing else, separates \iga from \rga: the elimination
rule for a positive self-referential term, which says that the
grounded meaning of a recursive definition is its least fixed point.
The rule is sound for the omega model and the system is consistent.
With it, accessibility, well-foundedness, winning positions, and
grounded truth itself are single sentences with induction
principles; \iga proves Goodstein's theorem and the hydra game outright; certifies the Cantor and Veblen
notations as ordinals, on canonical codes, up to $\eps$ and every
$\varphi(n,0)$; and HOL's well-foundedness of the standard $\eps$
notation follows from the internal theorem.  The limits are as exact
as the results.  \iga proves the whole Veblen notation well-founded
exactly when it admits bivalence of accessibility for that order, and
bivalence of accessibility is sound exactly when the order is
well-founded, so that $\Gamma_0$ lies at most one instance of excluded
middle away, sound to add as a rule, with the rule, the sentence and
one membership claim in a
primitive-recursive form of Kleene's $\mathcal O$ interderivable;
conditional proof at arbitrary hypotheses is inconsistent; and the
internal soundness argument for \rga's consistency, in the form the
G\"odel paper analyses, is unavailable to every omega-sound extension
of \rga.  Each limit names the same thing the classical logician has:
implication at a hypothesis that is not provably decided.

The open questions fall into three groups.

\paragraph{Strength.}
Is $\varphi(\omega,0)$ sharp for \iga, and what is \iga's
proof-theoretic ordinal?  The lower bound lies below the paracomplete
Kripke--Feferman theory PKF, so the first question is whether PKF
interprets into \iga.  The classical twins suggest the shape of the
answer above that: ground induction is Cantini's minimality in
grounded form, and Kripke--Feferman with minimality has the strength
of one inductive definition.  A related question is the grounded form of Kreisel and L\'evy's
theorem: does the reflection rule of the grounded family match the
induction rule on the ordinal measure?

\paragraph{Generality.}
Ground induction here is for unary bodies with numeral-expression
atoms; parameters are allowed, except inside the arguments of
recursive atoms.  Binary and mutual recursive variables, parameters
inside atom arguments, and bodies whose atoms take computed arguments
without the equality-guard routing, are natural extensions, each with
its own soundness case; for the last two, whether the routing is an
elaboration step that a different soundness proof makes unnecessary
is open.  Determinacy of the truth term's values inside \iga is open: it is
proved from one closure obligation, the closure premise of ground
induction at the determinacy motive, which is not discharged.  The syntactic $\Pi^1_1$ class, and
Kleene's normal form in HOL, would make the Kleene--Spector
identification a theorem of the development rather than an analogy.

\paragraph{Consistency.}
The internal-soundness route to \rga's consistency is closed in the
form analysed.  Two things would settle the rest, a theorem and a construction.  The theorem is the conjecture of \S\ref{sec:classical}, which sharpens
the no-go from omega-sound to all consistent systems: that no
consistent grounded extension of \rga proves \rga's consistency by
an internal soundness argument.  \cref{thm:curry} settles the case of
a conditional with conditional proof at every hypothesis and
contraction; what remains are conditionals that keep conditional
proof and drop contraction, for which the argument above is a
heuristic, so that the obstacle would be Curry's paradox rather than
omega-soundness.  The construction is Gentzen's route:
an ordinal assignment to \rga's proofs under which a cut-free
fragment carries the argument, with \iga supplying the descent.  The conjecture is a theorem to prove; Gentzen's route is a construction to find.

\paragraph{Artifact.}
Every theorem in this paper is checked in Isabelle/HOL, in sessions
that reject unproved assumptions at build time, with no axioms beyond
HOL's.  The two remarks of \S\ref{sec:induction} on the rule, that
dropping its number premise makes it unsound and that its strong form
is derivable, are checked as well.  The development, the sessions
\texttt{IGA}, \texttt{Combined} and \texttt{IGA\_NoGo} over the
sessions of \rga, of \oga and of the companion paper~\cite{ford-conrga},
is released at the tag \artifacttag of the project repository.  The
development was produced by the author directing Claude models from
Anthropic, which mechanized the rule and its soundness proof, the
ordinal certifications, the truth term and the equivalences of
\S\ref{sec:gamma0}, with the author reviewing the definitions and
the theorem statements.  The
theorem names and theory files that this paper gives are those of the
development at that tag, and the build command is
\texttt{isabelle build -d hol IGA Combined IGA\_NoGo}.
Appendix~\ref{sec:table} lists the theorems with their names.

\bibliographystyle{plainurl}
\bibliography{logic}

\clearpage
\appendix
\sloppy
\section*{Appendices: the headline proofs in detail}
\addcontentsline{toc}{section}{Appendices}

The seven appendices below give, for each headline theorem, the
statement as it stands in the Isabelle sources with every hypothesis,
the definitions it rests on, the proof as numbered steps that follow
the mechanized proof, with the inductions and their motives written
out, and a list of the lemmas a reader re-mechanizing the proof in
another assistant would need.  They are written for a competent
Masters or early PhD student without logic expertise.  Isabelle names
appear throughout, in \isn{sans serif}, so that each step can be found
in the sources.

\paragraph{Common vocabulary.}
Variables are de~Bruijn indices in the sources; the appendices write
named variables and say which index is meant where it matters.  A
term is \emph{well-formed} (\isn{wft}) when it is built correctly from
the constructors; \emph{closed} (\isn{closed}) when no index occurs
free; and index $k$ is \emph{free} in $t$ (\isn{free}) when it occurs
unbound.  Lifting (\isn{lift}) shifts the free indices of a term to
move it under a binder.  $\mu\Phi$ in these appendices is the main
text's $\fix\,\Phi$.  $b[u]$ substitutes $u$ for the binder's variable
of a body $b$ (the result of a beta step; \isn{bsub}).  $\sigma
\downarrow t$ applies a substitution $\sigma$ to every free variable
of $t$ at once (\isn{subf}); $\sigma'$ is $\sigma$ pushed under one
binder, leaving index $0$ alone and lifting the images of the others;
$\bar m :: \sigma$ sends index $0$ to $\bar m$ and index $v+1$ to
$\sigma\,v$.  An open term $c$ is \emph{omega-true under $\sigma$}
when $\sigma\downarrow c$ has value $1$; soundness is stated for every
substitution $\sigma$ of well-formed terms at once (\isn{oomjt}),
because a derivation's hypotheses and conclusion may be open, and the
induction has to apply its hypothesis at a substitution that
instantiates an eigenvariable.  Two terms are \emph{value-congruent}
(\isn{ovcr}) when they are equal except that, at some subterm
positions, they hold well-formed terms with the same omega value;
value congruence preserves omega values, which is what lets a
substituted argument be replaced by its numeral.  A \emph{head step} is one step of head reduction
(\isn{hstep}).  The omega model is a pair of mutually inductive
relations: $t$ reaches the head form $w$, written $\mathrm{hd}(t, w)$
(\isn{omh}), where $w$ is an abstraction with a body, the recursor
awaiting three, two or one arguments, or the universal combinator; and
$t$ has the value $r$, written $\mathrm{val}(t, r)$ (\isn{omv}), with
$1$ for true and $0$ for false.  There are twenty-one rules, eight for
head forms and thirteen for values (Appendix~\ref{app:truth} lists
them), and a term has at most one value (\isn{omv\_unique}).  A
sequent $\Gamma \ent c$ is derivable in \rga or, inside the locale
\isn{gi}, in \iga; $\Gamma$ is well-formed (\isn{wfc}) when its
members are; $c\,\mathbf N$ says $c$ is a number and $c\,\mathbf B$
that it is decided.  Conversion in both directions is \isn{ir\_conv}
and \isn{ir\_convE}.

\section{The soundness case of ground induction}
\label{app:gfix}

Files: \isn{IGA\_Positive.thy} (the transport lemma and the headline
lemma), \isn{IGA\_Proof.thy} (the rule), \isn{IGA\_Sound.thy} (the
soundness case and \isn{omega\_sound}).

\paragraph{Definitions.}
A body $\Phi$ has the recursive variable $a$ at index $1$ and the
point $x$ at index $0$; $\mu\Phi = \fix(\lambda a.\lambda x.\Phi)$
(\isn{trmGfix}).  $\Phi[Q/a]$ substitutes $Q$ for the recursive
variable under the point binder (\isn{asub}), and $\Phi[Q/a][t/x]$
substitutes both (\isn{gsub}).  $\mu\Phi\cdot x$ unfolds in three head
steps to $\Phi[\mu\Phi/a][x/x]$ (\isn{gfix\_step1} to
\isn{gfix\_step3}), and in the omega model $\mu\Phi$ reaches the head
form ``abstraction with body $\Phi[\mu\Phi/a]$''
(\isn{omh\_trmGfix}).  A numeral expression (\isn{nexp}) is a variable
other than $a$, zero, or a successor of one; the parametric grammar
(\isn{pnexp}) requires the variable to be bound inside the body.
Positivity (\isn{pos}, \isn{neg}; parametric \isn{posP}, \isn{negP}) is
as in \S\ref{sec:positive}; \isn{posP\_subf2} says that substituting
anything for the parameters preserves it.

The \emph{oracle reading} (\isn{osem}) is an inductive relation
$\mathrm{read}_S(k, \sigma, t, b)$: the term $t$, with recursive index
$k$ and environment $\sigma$, reads as the truth value $b$ when its
recursive atoms are read as membership in the set $S$.  Its clauses:
a subterm in which $k$ does not occur reads as the omega value of
$\sigma\downarrow t$ (\isn{os\_nf}); an atom $a\cdot u$ reads true when
$\sigma\downarrow u$ is literally a numeral $\bar m$ with $m \in S$
(\isn{os\_app}; the argument may not mention the recursive variable),
and no clause makes an atom read false; disjunction
and negation have their usual clauses; a universal reads true when
every numeral instance of its body does and false when some instance
reads false (\isn{os\_allT}, \isn{os\_allF}).  An environment is
\emph{admissible} for $\Phi$ at $k$ (\isn{gfenv}) when $\sigma k = T$
and every other free variable of $\Phi$ is sent to a numeral; the
parametric form (\isn{gfenvP}) asks for numerals only at the bound
variables, the indices below $k$.

The induction's motive has five parts (\isn{gfInv}), stated for a
value judgment $\mathrm{val}(t, r)$ with the set $S$ and the term $T =
\mu\Phi_0$ fixed:
\begin{enumerate}
\item (\isn{gfA}) if $t = T\cdot\bar m$ and $r$ is true, then $m \in
  S$;
\item (\isn{gfP}) for every positive $\Phi$, admissible $\sigma$ and
  $t = \sigma\downarrow\Phi$ with $r$ true, $\Phi$ reads true;
\item (\isn{gfN}) the same for negative $\Phi$, $r$ false, reads
  false;
\item (\isn{gfRP}) for every positive quantifier body $d$ and
  admissible $\sigma$ for $\cA\cdot\lambda d$, if $t$ is the instance
  redex $(\lambda(\sigma'\downarrow d))\cdot\bar m$ and $r$ is true,
  then $d$ reads true at the environment extended by $\bar m$;
\item (\isn{gfRN}) the same for a negative $d$ and $r$ false.
\end{enumerate}
The parametric motive (\isn{gfInvP}) replaces \isn{pos}, \isn{neg} and
\isn{gfenv} by their parametric forms.

\paragraph{Statements.}
\emph{Transport} (\isn{gfix\_transport}).  Let $T = \mu\Phi_0$ with
$\Phi_0$ positive and $\lambda a.\lambda x.\Phi_0$ closed, and let $S$
satisfy the closure condition: for every $n$, if $\Phi_0$ reads true
at the point $\bar n$ with recursive variable $T$ and atoms in $S$,
then $n \in S$.  Then every head judgment satisfies the trivial motive
and every value judgment $\mathrm{val}(t, r)$ satisfies the five-part
motive.  \emph{Set form} (\isn{gfix\_omv\_ind\_set}): under the same
hypotheses, if $\mu\Phi_0\cdot\bar n$ is true in the omega model then
$n \in S$.  \emph{Term form} (\isn{gfix\_omv\_ind},
\cref{lem:gfix-ind}): for an arbitrary term $P$, neither closed nor
well-formed, if for every $n$ the truth of $\Phi_0[P/a][\bar n/x]$
implies the truth of $P\cdot\bar n$, then the truth of
$\mu\Phi_0\cdot\bar n$ implies the truth of $P\cdot\bar n$.  The
parametric forms (\isn{gfix\_transport\_par},
\isn{gfix\_omv\_ind\_set\_par}, \isn{gfix\_omv\_ind\_par}) are the same
three statements with the parametric grammar and no closedness
hypothesis at all.

\emph{The rule} (\isn{pr\_gfix}).  From $\Phi$ parametrically positive,
$\Gamma \ent \mu\Phi\cdot x$, the closure premise $i\,\mathbf N,\
\Phi[\lambda c/a][i/x],\ \Gamma \ent c[i]$, and $\Gamma \ent x\,\mathbf
N$, with $\Phi$ and $c$ well-formed and $i$ not free in $\Gamma$, in
$\lambda c$, or in $\lambda a.\lambda x.\Phi$, conclude $\Gamma \ent
c[x]$.  \emph{Soundness} (\isn{omega\_sound}): if $\Gamma \ent c$ in
\iga, $\Gamma$ is well-formed, and every member of $\Gamma$ is
omega-true, then $c$ is omega-true.  \emph{Consistency}
(\isn{gi\_consistent}): $\emptyset \ent 0 = 1$ is not derivable.

\paragraph{Proof of the transport lemma.}
A simultaneous rule induction on the two omega relations
(\isn{omh\_omv.inducts}).  The head motive is trivial, so the eight
head cases are immediate.  In each of the thirteen value cases, prove
the five parts separately.  Two devices recur.  First, \emph{shape
analysis} (\isn{pos\_substE}, \isn{neg\_substE}): if $\Phi$ is positive
and $t = \sigma\downarrow\Phi$, then at least one of the following
holds: $k$ is not free in $\Phi$; $\Phi$ is an atom $a\cdot u$ with $u$
a numeral expression, and $t = \sigma k\cdot(\sigma\downarrow u)$;
$\Phi$ is a disjunction; $\Phi$ is a negation of a negative body;
$\Phi$ is a universal $\cA\cdot\lambda d$ with $d$ positive one index
up, and $t = \cA\cdot\lambda(\sigma'\downarrow d)$.  For a negative
body the atom case is absent.  Second, \emph{the non-occurrence case
is always immediate}: if $k$ is not free in $\Phi$, the value
judgment on $t$ is exactly the premise of \isn{os\_nf}.
\begin{enumerate}
\item \emph{Zero, successor, equality.}  $t$ is not an application, so
  part 1 is vacuous; in parts 2 and 3 only the non-occurrence shape
  matches; $t$ is not a beta redex, so parts 4 and 5 are vacuous.
\item \emph{Negation true} (from $\mathrm{val}(b, 0)$, conclude
  $\mathrm{val}(\neg b, 1)$).  Part 2: either non-occurrence, or $\Phi
  = \neg\Psi$ with $\Psi$ negative; admissibility passes to $\Psi$
  (\isn{gfenv\_mono}), the induction hypothesis' part 3 at $b =
  \sigma\downarrow\Psi$ gives that $\Psi$ reads false, and
  \isn{os\_notT} finishes.  The other parts are
  vacuous by the value or the shape.
\item \emph{Negation false}: symmetric, with part 2 of the hypothesis
  and \isn{os\_notF}.
\item \emph{Disjunction}: for part 2, $\Phi = \Phi_1 \vee \Phi_2$ and
  the hypothesis' part 2 on the true disjunct, then \isn{os\_orT1} or
  \isn{os\_orT2}; for part 3, both hypotheses' part 3 and
  \isn{os\_orF}.
\item \emph{Beta} (from $\mathrm{hd}(t_0, \lambda b)$ and
  $\mathrm{val}(b[u], r)$, conclude $\mathrm{val}(t_0\cdot u, r)$).  This
  is the central case.
  \begin{enumerate}
  \item Part 1.  Suppose $t_0\cdot u = T\cdot\bar m$ and $r$ is true.
    Then $t_0 = T$; determinacy of head forms (\isn{omh\_det}) with
    \isn{omh\_trmGfix} gives $b = \Phi_0[T/a]$, so $b[u] =
    \sigma_0\downarrow\Phi_0$ for the environment $\sigma_0$ sending
    the point to $\bar m$, the recursive variable to $T$, and index $v +
    2$ to the variable $v$ (\isn{bsub\_asub}).  This $\sigma_0$ is
    admissible: $\sigma_0 1 = T$, and every free variable of $\Phi_0$
    other than $1$ is $0$, by closedness of $\lambda a.\lambda x.\Phi_0$
    (\isn{closed\_body\_free}), and $\sigma_0 0$ is a numeral.  \emph{This
    is the only use of closedness.}  The hypothesis' part 2 at $b[u]$
    gives that $\Phi_0$ reads true, and the closure condition gives
    $m \in S$.
  \item Part 2.  Non-occurrence: immediate.  The negation and
    disjunction shapes are impossible, since $t$ is an application.
    Atom $\Phi = a\cdot u'$:
    $\sigma k = T$ by admissibility and $\sigma\downarrow u'$ is a
    numeral $\bar m$ (\isn{gfenv\_atom}, through
    \isn{nexp\_subf\_num}); so $t_0\cdot u = T\cdot\bar m$, part (a)
    gives $m \in S$, and \isn{os\_app} finishes.  Universal: $t_0$ would
    be $\cA$, which contradicts $\mathrm{hd}(t_0, \lambda b)$
    (\isn{omh\_A\_inv}).
  \item Part 3: non-occurrence, or the universal shape, which is
    excluded as in (b).
  \item Parts 4 and 5.  Suppose $t_0 = \lambda(\sigma'\downarrow d)$ and
    $u = \bar m$.  Then $b = \sigma'\downarrow d$ (\isn{omh\_lam\_inv}),
    so $b[u] = (\bar m :: \sigma)\downarrow d$
    (\isn{bsub\_subf\_lift0}); the extended environment is admissible
    one index up (\isn{gfenv\_bind}), and the hypothesis' part 2 or 3
    at $b[u]$ gives the result.
  \end{enumerate}
\item \emph{Recursor at zero and at successor.}  The head $t_0$ has
  the head form of the recursor awaiting one argument, so it is not $T$, not $\cA$ and
  not an abstraction; parts 1, 4 and 5 are vacuous, the atom and
  universal shapes are impossible, and only non-occurrence remains.
\item \emph{Universal true} (from $\mathrm{hd}(t_0, \cA)$ and every
  $u\cdot\bar n$ true).  Parts 1, 4 and 5 are vacuous as in the
  previous case.  For part 2: non-occurrence is immediate; the atom
  shape is impossible since $t_0 \ne T$; in the universal shape $\Phi =
  \cA\cdot\lambda d$ and $u = \lambda(\sigma'\downarrow d)$, and for each
  $n$ the hypothesis at $u\cdot\bar n$, which is a true value judgment,
  includes part 4, which says $d$ reads true at the environment
  extended by $\bar n$; then \isn{os\_allT}.
\item \emph{Universal false}: symmetric, with part 5 at the refuted
  instance and \isn{os\_allF}.
\end{enumerate}

\paragraph{Proof of the set form and the term form.}
\begin{enumerate}\setcounter{enumi}{8}
\item The set form: apply the transport lemma to the value judgment
  $\mathrm{val}(\mu\Phi\cdot\bar n, 1)$ and read off part 1 at $m = n$.
\item The term form: take $S = \{m \mid P\cdot\bar m \text{ is
  true}\}$ and apply the set form.  Its closure obligation, that an
  oracle-true reading at the point $\bar m$ puts $m$ in $S$, is
  discharged in three moves: \isn{osem\_term}, an induction on the
  oracle reading, converts a reading at the truth set of $P$ into the
  term reading, the omega-truth of $\Phi$ with $P$ substituted for the
  recursive variable; that term is $\Phi[P/a][\bar m/x]$; and the
  hypothesis gives $P\cdot\bar m$ true.
\end{enumerate}

\paragraph{The parametric forms.}
\begin{enumerate}\setcounter{enumi}{10}
\item Steps 1 to 8 repeat verbatim with the parametric grammar, the
  parametric admissibility and the parametric motive.  The one step
  that changes is 5(a): parametric admissibility asks for a numeral
  only at the bound variables, here the point alone, so no closedness
  is needed.  In 5(b) the atom's argument is a parametric numeral
  expression, whose free variables are all bound
  (\isn{pnexp\_free\_lt}), so admissibility still makes it a numeral
  (\isn{gfenvP\_atom}).  This is why a parameter may not appear in an
  atom's argument.
\item The set form and the term form follow as in steps 9 and 10.
\end{enumerate}

\paragraph{The soundness case.}
Soundness is proved as: for every derivable $H \ent c$ with $H$
well-formed and every substitution $\sigma$ of well-formed terms, if
every member of $\sigma\downarrow H$ is omega-true then
$\sigma\downarrow c$ is omega-true (\isn{oomjt}).  The proof is an
induction on the proof with a case analysis of the last rule
(\isn{gi\_step\_oomjt}): the forty-two \rga cases replay from the \rga
development, and the case of ground induction is new.  Fix $\sigma$
and write $\Phi_\sigma$ for the body with its parameters instantiated
(the substitution lifted under two binders), $c_\sigma$ for the motive
likewise under one binder, and $P = \lambda c_\sigma$.
\begin{enumerate}\setcounter{enumi}{12}
\item From the premise $H \ent x\,\mathbf N$, $\sigma\downarrow x$ has
  a value $v$ (\isn{omtrue\_NE}); the premise's validity also gives
  that $x$ is well-formed (\isn{valid\_wff\_exp}), which value
  congruence needs below.
\item From the premise $H \ent \mu\Phi\cdot x$, and since
  $\sigma\downarrow\mu\Phi = \mu\Phi_\sigma$ (\isn{subf\_trmGfix}),
  $\mu\Phi_\sigma\cdot(\sigma\downarrow x)$ is true.
\item Value congruence replaces $\sigma\downarrow x$ by the numeral
  $\bar v$: two well-formed terms with the same value are congruent
  (\isn{ovcr\_swap}), congruence is closed under application
  (\isn{ovcr\_appI}, \isn{ovcr\_refl}), and it preserves values
  (\isn{omv\_vcr\_ex}); so $\mu\Phi_\sigma\cdot\bar v$ is true.
\item $\Phi_\sigma$ is parametrically positive (\isn{posP\_subf2}).
\item \emph{Closure for $P$.}  For each $n$, suppose
  $\Phi_\sigma[P/a][\bar n/x]$ is true, and let $\tau$ be $\sigma$ with
  $i$ sent to $\bar n$.  Then $\tau$ is a substitution of well-formed
  terms; $\tau$ leaves $c_\sigma$ and $\Phi_\sigma$ unchanged, since
  $i$ is not free in $\lambda c$ or in $\lambda a.\lambda x.\Phi$; and
  $\tau$ makes every hypothesis of the closure premise's context true:
  $i\,\mathbf N$ because $\bar n$ has value $n$, the body hypothesis
  because $\tau\downarrow\Phi[\lambda c/a][i/x] =
  \Phi_\sigma[P/a][\bar n/x]$ (\isn{bsub\_asub},
  \isn{subf\_gsub\_par}), and the members of $H$ because $i$ is not
  free in them.  The induction hypothesis for the closure premise
  gives that $\tau\downarrow c[i]$, which is $c_\sigma[\bar n]$, is true
  (\isn{bsub\_subf\_commute}); one beta step gives $P\cdot\bar n$ true.
\item The parametric term form with steps 15 to 17 gives $P\cdot\bar
  v$ true; one beta step gives $c_\sigma[\bar v]$ true
  (\isn{om\_hstep\_rev\_omv} with \isn{hs\_beta}).
\item Value congruence back (\isn{ovcr\_bsub}) gives
  $c_\sigma[\sigma\downarrow x]$ true, which is $\sigma\downarrow c[x]$.
\item \isn{omega\_sound} is the case of the identity substitution;
  \isn{gi\_consistent} follows because $0 = 1$ is never omega-true.
\end{enumerate}

\paragraph{Lemmas a re-mechanizer needs.}
The omega model as a simultaneous inductive definition, with
determinacy of head forms and the two shape inversions; the
substitution algebra (\isn{bsub\_asub}, \isn{bsub\_subf\_lift0},
\isn{subf\_trmGfix}, \isn{subf\_gsub\_par}, \isn{bsub\_subf\_commute},
\isn{closed\_body\_free}); the grammar facts (shape analysis,
\isn{nexp\_subf\_num}, \isn{pnexp\_free\_lt}, \isn{posP\_subf2}); the
admissibility facts; \isn{osem\_term}; value congruence
(\isn{ovcr\_swap}, \isn{ovcr\_appI}, \isn{ovcr\_refl}, \isn{ovcr\_bsub},
\isn{omv\_vcr\_ex}); head-step invariance of values in the direction
from reduct to redex (\isn{om\_hstep\_rev\_omv}); \isn{omtrue\_NE}
and the validity of premises; and the forty-two \rga cases of omega
soundness.

\section{The characterization theorem}
\label{app:lfp}

File: \isn{IGA\_Lfp.thy}.  The proof uses the oracle reading, the
lemma \isn{osem\_term}, and the set form of the induction lemma from
Appendix~\ref{app:gfix}.

\paragraph{Definitions.}
The operator $F_\Phi$ (\isn{gfixOp}) sends a set $S$ to the set of $n$
at which the body reads true, in the oracle reading, at the point
$\bar n$ with the recursive variable sent to $\mu\Phi$ and atoms read
in $S$.  The value placed at the recursive variable is never
consulted, because the oracle reading reads that index only through
the atom clause, which tests membership.  $\lfp$ is HOL's least fixed
point of a monotone set operator, the intersection of all pre-fixed
points.

\paragraph{Statement.}
For $\Phi$ positive with $\lambda a.\lambda x.\Phi$ closed, the set of
$n$ at which $\mu\Phi\cdot\bar n$ is omega-true equals
$\lfp\,F_\Phi$ (\isn{gfix\_omv\_eq\_lfp}).

\paragraph{Proof.}
\begin{enumerate}
\item \emph{Monotonicity, for every body} (\isn{osem\_mono},
  \isn{gfixOp\_mono}).  If $S \subseteq S'$ and $t$ reads $b$ at $S$,
  then it reads $b$ at $S'$: an induction on the oracle reading, in
  which only the atom clause consults $S$, and it consults membership,
  which transfers.  No clause consults non-membership.  Hence
  $F_\Phi$ is monotone and $F_\Phi(\lfp\,F_\Phi) = \lfp\,F_\Phi$
  (Knaster--Tarski, \isn{lfp\_fixpoint}).
\item \emph{Oracle-to-term transport at the truth set}
  (\isn{osem\_gfix\_omv}).  Let $\mathit{Tr}$ be the truth set of
  $\mu\Phi$.  If $\Phi$ reads true at the point $\bar n$ with atoms in
  $\mathit{Tr}$, then $\mu\Phi\cdot\bar n$ is true.  \isn{osem\_term}
  with $P = \mu\Phi$ gives the omega-truth of $\Phi[\mu\Phi/a][\bar
  n/x]$; writing it as $(\Phi[\mu\Phi/a])[\bar n]$ (\isn{bsub\_asub}),
  the beta rule of the omega model with the head form of $\mu\Phi$
  (\isn{omh\_trmGfix}) folds it back to $\mu\Phi\cdot\bar n$.  This
  direction uses neither positivity nor closedness.
\item \emph{The truth set is a pre-fixed point} (\isn{gfix\_omv\_pre}):
  $F_\Phi(\mathit{Tr}) \subseteq \mathit{Tr}$, which is step 2
  restated.
\item $\lfp\,F_\Phi \subseteq \mathit{Tr}$ (\isn{lfp\_gfix\_omv}), by
  \isn{lfp\_lowerbound} on step 3.  This inclusion holds for every
  body.
\item $\mathit{Tr} \subseteq \lfp\,F_\Phi$ (\isn{gfix\_omv\_lfp}).  Let
  $n \in \mathit{Tr}$ and apply the set form of the induction lemma
  with $S = \lfp\,F_\Phi$: its closure obligation, that a true reading
  at the point $\bar m$ with atoms in $\lfp\,F_\Phi$ puts $m$ in
  $\lfp\,F_\Phi$, is $m \in F_\Phi(\lfp\,F_\Phi)$ by the definition of
  $F_\Phi$, and that set equals $\lfp\,F_\Phi$ by step 1.  This is the
  only step that uses positivity and closedness.
\item Antisymmetry of inclusion.  As a corollary, $F_\Phi(\mathit{Tr})
  = \mathit{Tr}$ (\isn{gfix\_omv\_fixpoint}).
\end{enumerate}

\paragraph{The anchors.}
For closed $R$, the truth set of $\Acc_R$ is $\lfp$ of the operator of
the accessibility body (\isn{acc\_omv\_eq\_lfp}), and that least fixed
point is HOL's accessible part of the not-refuted relation
(\isn{acc\_lfp\_eq}): one inclusion is the soundness of accessibility
from Appendix~\ref{app:gfix}'s lemma, the other an accessible-part
induction that splits on whether $R\cdot\bar y\cdot\bar x$ is
omega-false.  The winning-position term is handled the same way: its truth set
is the least fixed point (\isn{win\_omv\_eq\_lfp}), and that least
fixed point is HOL's inductively defined set of winning positions, by
an induction on that inductive definition (\isn{win\_lfp\_eq}).

\paragraph{Lemmas a re-mechanizer needs.}
\isn{osem\_mono} and Knaster--Tarski; \isn{osem\_term},
\isn{omh\_trmGfix}, the beta rule of the omega model,
\isn{bsub\_asub}; and the set form of the induction lemma.

\section{Well-foundedness of the Cantor-normal-form order}
\label{app:cnf}

File: \isn{IGA\_Ordinal.thy}, section ``G4 in GI: epsilon-0''; the
theorem is \isn{gi\_WF\_cnf} and its helpers are \isn{cnf\_zero\_acc},
\isn{cnf\_inv\_acc}, \isn{cnf\_Q} and \isn{cnf\_acc\_ranks}.
\isn{IGA\_Epsilon.thy} instantiates it at the compiled projections
(\isn{gi\_WF\_epsilon0}) and derives HOL's \isn{epsilon0\_wf}.

\paragraph{Definitions.}
$\Acc_R$, $\WF(D,R)$ and the domain $\mathrm{Nat} = \lambda x.\,(x = x)$,
true at every number, are as in \S\ref{sec:wf}; $R\cdot y\cdot x$ reads
``$y$ is below $x$''.  The order on the naturals (\isn{ltOp}) is a
closed recursor term computing $<$; \rga proves it well-founded by
number induction (\isn{rga\_WF\_omega}).  Two closed, well-formed,
provably total terms $\mathit{hi}$ and $\mathit{lo}$ read a code: its
\emph{rank} is $\mathit{hi}\,x$, its \emph{head} (the code of the
leading exponent) is $\mathit{hi}(\mathit{lo}\,x)$, and its
\emph{tail} is $\mathit{lo}(\mathit{lo}\,x)$; a code of rank $0$
denotes $0$ and otherwise $\omega^{\mathrm{head}} + \mathrm{tail}$.
The \emph{comparison} $L$ (\isn{cnfL}) is a fixed point that unfolds to
(\isn{ir\_cnfL\_unfold}): $y$ has rank $0$ and $x$ has not; or both
satisfy the rank guard $G$ (head and tail have smaller rank than the
code) and either $L(\mathrm{head}\,y, \mathrm{head}\,x)$, or the heads
are equal and $L(\mathrm{tail}\,y, \mathrm{tail}\,x)$.  \emph{Validity}
$V$ (\isn{cnfV}) is a fixed point that unfolds to
(\isn{ir\_cnfV\_unfold}): the rank is $0$; or $G$ holds, head and tail
are valid, and the tail has rank $0$, or the tail's head is below the
head, or the two heads are equal (the exponents weakly descend along
the spine).  The \emph{order} $E$ (\isn{cnfOp}) is $\lambda y.\lambda
x.\,V y \wedge V x \wedge L\,y\,x$, with unfolding and folding laws
(\isn{ir\_cnfC\_unfold}, \isn{ir\_cnfC\_fold}).  \emph{Relative
transfinite induction} along a term $R$ (\isn{giRelTI}) is a HOL
predicate on $R$: for every well-formed context and motive, if in
every extension $\Delta$ and at every number $z$ the motive at $z$
follows from the motive at every $R$-predecessor of $z$, then the
universal closure of the motive is derivable; its instance at
\isn{ltOp} (\isn{gi\_relTI\_ltOp}) is strong induction on the naturals
inside \iga.  \emph{Ground induction at a proved accessibility}
(\isn{gi\_TI}): from $\Gamma \ent \Acc_R\cdot x$, $x$ a number, and the
same progressiveness premise, the motive holds at $x$; it is one
application of the rule at the accessibility body.

\paragraph{Statement.}
In a context fixing $\mathit{hi}$ and $\mathit{lo}$ closed and
well-formed, with the hypotheses that \rga proves $\mathit{hi}\cdot x$
and $\mathit{lo}\cdot x$ numbers whenever it proves $x$ a number:
$\emptyset \ent \WF(\mathrm{Nat}, E)$ in \iga.

\paragraph{Auxiliary facts.}
\begin{itemize}
\item[(D1)] \emph{Decidedness, in \rga.}  $L\,y\,x$ and $V\,x$ are decided
  at numbers (\isn{cnfL\_B}, \isn{cnfV\_B}), each by \rga's strong
  induction on the rank of $x$, with $y$ universally quantified in the
  motive ``for every code of rank $z$ the matrix is decided'': one
  unfolding, the decidedness of \isn{ltOp}, and the hypothesis at the
  smaller ranks the guard guarantees; a conjunction whose first
  conjunct is refuted is decided.  $E\,y\,x$ is decided as the
  conjunction of the three (\isn{cnfOp\_B}).
\item[(D2)] \emph{Case split on a decided term} (\isn{g\_caseB}); an
  equality of numbers is decided (\isn{g\_eqTI}).
\item[(D3)] \emph{Accessibility introduction} (\isn{g\_acc\_intro}): if
  in every extension, for every number $j$, $\neg(E\,j\,x) \vee \Acc
  \cdot j$ is derivable, then $x$ is accessible; universal introduction
  followed by folding.  Every use below first splits on the decided
  $E\,j\,x$ (D1, D2) to obtain the disjunction.
\item[(D4)] \emph{Rank-$0$ codes are accessible} (\isn{cnf\_zero\_acc}):
  by D3, any $E\,y\,x$ unfolds to $L\,y\,x$, whose first disjunct needs
  $x$ of nonzero rank and whose second needs something below rank $0$,
  which \isn{ir\_ltOp\_zero} refutes.
\item[(D5)] \emph{Invalid codes are accessible} (\isn{cnf\_inv\_acc}),
  since $E\,y\,x$ asserts $V\,x$.
\item[(D6)] Logical glue: transport of an equation under a function
  (\isn{g\_app\_transport}), disjunctive syllogism, universal
  elimination, ex falso, conjunction projections.
\end{itemize}

\paragraph{The nest.}
Four inductions: (I) strong induction on the rank; (II) inside it,
ground induction on the accessibility of the head; (III) inside that,
strong induction on the rank again; (IV) inside that, ground
induction on the accessibility of the tail.
\begin{enumerate}
\item \emph{The theorem from the rank lemma.}  Universal introduction
  at $j$; instantiate the rank lemma (step 2) at $\mathit{hi}\,j$ and
  at $j$; the disjunct $\neg(\mathit{hi}\,j = \mathit{hi}\,j)$ is
  refuted by reflexivity, leaving $\Acc\cdot j$; weaken to the matrix
  of $\WF$.
\item \emph{The rank lemma} (\isn{cnf\_acc\_ranks}, induction I): $\ent
  \cA\cdot\lambda r.\,\cA\cdot\lambda x.\,\neg(\mathit{hi}\,x = r) \vee
  \Acc\cdot x$.  Relative transfinite induction along \isn{ltOp} with
  the motive $M(r)$ = ``every code of rank $r$ is accessible'' and
  hypothesis $M(r')$ for all $r' < r$.  Fix $x$ and split: on
  $\mathit{hi}\,x = r$ (the unequal case is immediate); on $V\,x$ (the
  invalid case is D5); on $\mathit{hi}\,x = 0$ (the zero case is D4).
  In the main case, $x$ is valid, nonzero, of rank $r$: unfolding
  $V\,x$ gives the guard, so the head has smaller rank, and the
  hypothesis at that rank gives $\Acc(\mathrm{head}\,x)$; apply step 3
  at $e_0 = \mathrm{head}\,x$ and instantiate the result at $n =
  \mathit{hi}\,x$ and $x$; the two equality disjuncts are refuted by reflexivity and the
  other two by the case hypotheses, leaving $\Acc\cdot x$.
\item \emph{The head lemma} (\isn{cnf\_Q}, induction II).  From
  $\Gamma \ent e_0\,\mathbf N$ and $\Gamma \ent \Acc\cdot e_0$,
  conclude $Q(e_0)$, where $Q(e)$ = ``for every $n$ and $x$: $x$ is
  invalid, or has rank $0$, or has head other than $e$, or has rank
  other than $n$, or is accessible''.  This is ground induction along
  $E$ at $e_0$ with motive $Q$; in the closure at $e$ the hypothesis
  is $Q(e')$ for every $e'$ with $E\,e'\,e$.
  \begin{enumerate}
  \item \emph{Use of the hypothesis}: if $u$ is a valid nonzero number
    with $E(\mathrm{head}\,u, e)$, then $\Acc\cdot u$: instantiate the
    hypothesis at $\mathrm{head}\,u$, then at $\mathit{hi}\,u$ and $u$,
    and refute the side disjuncts.
  \item \emph{Building $E$}: from $V\,a$, $V\,b$ and $L\,a\,b$, $E\,a\,b$
    by folding.
  \item \emph{Induction III}: relative transfinite induction along
    \isn{ltOp} with motive $P(n)$ = ``every valid nonzero $x$ with head
    $e$ and rank $n$ is accessible'' and hypothesis $P(n')$ for $n' <
    n$.  Fix $x$ and split, all decided, on $V\,x$, $\mathit{hi}\,x =
    0$, $\mathrm{head}\,x = e$ and $\mathit{hi}\,x = n$.  In the main
    case let $t = \mathrm{tail}\,x$; unfolding $V\,x$ gives the guard,
    so $t$ has rank below $n$, validity of the head, hence of $e$ by
    transport, validity of $t$, and the spine condition.
  \item \emph{Accessibility of the tail.}  If $t$ has rank $0$: D4.
    Otherwise the spine condition leaves two cases.  If
    $L(\mathrm{head}\,t, \mathrm{head}\,x)$: transport to
    $L(\mathrm{head}\,t, e)$, get $V(\mathrm{head}\,t)$ by unfolding
    $V\,t$, build $E(\mathrm{head}\,t, e)$ by (b), and conclude
    $\Acc\cdot t$ by (a).  If $\mathrm{head}\,t = \mathrm{head}\,x = e$:
    the guard gives $t$ of rank below $n$, and the hypothesis of
    induction III at that rank, instantiated at $t$, gives $\Acc\cdot
    t$.
  \item \emph{Induction IV, the jump}: ground induction along $E$ at
    $t$, from $\Acc\cdot t$, with motive $J(z)$ = ``every valid nonzero
    $x'$ with head $e$ and tail $z$ is accessible'' and hypothesis
    $J(z')$ for every $z'$ with $E\,z'\,z$.  Fix $x'$ and split, all
    decided, on $V\,x'$, $\mathit{hi}\,x' = 0$, $\mathrm{head}\,x' = e$
    and $\mathrm{tail}\,x' = z$; in the main case get $V\,e$ and $V\,z$
    by unfolding and transport.  Prove $\Acc\cdot x'$ by D3: for $y$
    with $E\,y\,x'$, split on $\mathit{hi}\,y = 0$ (then D4); otherwise
    unfold $L\,y\,x'$, whose first disjunct contradicts the case and
    whose second gives either $L(\mathrm{head}\,y, \mathrm{head}\,x')$,
    hence $L(\mathrm{head}\,y, e)$, $V(\mathrm{head}\,y)$ by unfolding
    $V\,y$, $E(\mathrm{head}\,y, e)$ by (b) and $\Acc\cdot y$ by (a); or
    $\mathrm{head}\,y = e$ and $L(\mathrm{tail}\,y, z)$, hence, with
    $V(\mathrm{tail}\,y)$ from unfolding $V\,y$, $E(\mathrm{tail}\,y, z)$
    by (b), $J(\mathrm{tail}\,y)$ by the
    hypothesis, and its instance at $y$ gives $\Acc\cdot y$ after
    refuting $\neg(\mathrm{tail}\,y = \mathrm{tail}\,y)$.
  \item Instantiate $J(t)$ at $x$, with $\mathrm{tail}\,x = t$ by
    reflexivity, to get $\Acc\cdot x$, which closes induction III and
    with it the motive $Q(e)$.
  \end{enumerate}
\end{enumerate}

\paragraph{Lemmas a re-mechanizer needs.}
The two induction engines, \isn{gi\_TI} and relative transfinite
induction at \isn{ltOp}; the unfold and fold laws of the three fixed
points; decidedness of $L$, $V$ and $E$ and of \isn{ltOp} and of
equality; \isn{ir\_ltOp\_zero}; the derived rules D2, D3 and D6; the
substitution bookkeeping for the nested motives, which in another
assistant is just the instantiation of a nested universal; and the
totality of $\mathit{hi}$ and $\mathit{lo}$ lifted into \iga.

\section{The $\Gamma_0$ equivalence, the one-claim theorem, and the triangle}
\label{app:gamma0}

Files: \isn{IGA\_Veblen.thy} (the equivalence), \isn{IGA\_VeblenAcc.thy}
(the one-claim theorem and the triangle), \isn{IGA\_VeblenO.thy} (the
$\Gamma_0$ code and the converse), \isn{IGA\_KleeneO.thy} (the
$\mathcal O$ term and its induction), \isn{IGA\_VeblenFS.thy} and
\isn{IGA\_VeblenFS2.thy} (the fundamental sequences and their
cofinality).  Every derivation is in \iga.

\paragraph{Statements.}
\emph{The equivalence} (\isn{gi\_WF\_gamma0\_iff\_bivalence}):
$\emptyset \ent \WF(\mathrm{Nat}, \mathrm{vOp})$ if and only if
$\mathrm{AccBiv}$, where $\mathrm{AccBiv}$ is the HOL proposition: for
every well-formed context $\Delta$, every $d$ with $\Delta \ent
d\,\mathbf N$ and every well-formed $X$, if $\Acc\cdot d, \Delta \ent
X$ then $\Delta \ent \neg(\Acc\cdot d) \vee X$, with $\Acc$ the
accessibility term of the Veblen order $\mathrm{vOp}$.  \emph{The
one-claim theorem} (\isn{o\_wf\_of\_gamma0}): $\gamma_0 \in \mathcal O,
\emptyset \ent \WF(\mathrm{Nat}, \mathrm{vOp})$; in context form, from
$\Gamma$ well-formed and $\Gamma \ent \gamma_0 \in \mathcal O$, $\Gamma
\ent \WF(\mathrm{Nat}, \mathrm{vOp})$.  \emph{The triangle}
(\isn{gamma0\_triangle}): $\WF$ derivable iff $\mathrm{AccBiv}$, and
$\WF$ derivable iff $\gamma_0 \in \mathcal O$ derivable.

\paragraph{Definitions.}
A Veblen code is a number with four compiled, provably total
accessors: rank, tag, and two fields.  Three shapes: zero (rank $0$),
principal $\varphi(F_1, F_2)$ (nonzero rank, tag $0$), and sum (nonzero
rank, nonzero tag), read as $p + t$ with head $p = F_1$ and rest $t =
F_2$; that the head is principal and the rest nonzero is part of
validity, not of the shape.  The guard
says both fields have smaller rank.  The comparison $y < x$ ($\mathrm{vLt}$) is one closed fixed point:
$y$ is zero and $x$ is not; or both satisfy the guard and one of
Sch\"utte's four shape cases holds.  Principal below principal,
$\varphi(a, b) < \varphi(c, d)$: $a < c$ and $b < \varphi(c, d)$; or $a =
c$ and $b < d$; or $\varphi(a, b) < d$.  Principal $y$ below sum $x$: $y
< F_1 x$, or $y = F_1 x$ and $F_2 x$ is nonzero.  Sum $y$ below
principal $x$: $F_1 y < x$.  Sum below sum: $F_1 y < F_1 x$, or $F_1 y
= F_1 x$ and $F_2 y < F_2 x$.  Validity $\mathrm{vV}$ is a closed fixed
point: a code is valid when it is zero, or it satisfies the guard,
both fields are valid, and either it is principal with $F_2 x < x$,
or it is a sum whose head is principal, whose rest is nonzero, and
which satisfies the normal-form condition: $F_2 x < F_1 x$, or $F_2 x
= F_1 x$, or $F_2 x$ is a sum with the same head as $x$.  The two
fixed points are defined one after the other, validity referring to
the already defined comparison.
$\mathrm{vOp} = \lambda y.\lambda x.\,\mathrm{vV}\,y \wedge
\mathrm{vV}\,x \wedge \mathrm{vLt}\,y\,x$ relates only valid codes.
Motives: $\mathrm{RC}(c, d)$ = ``every valid principal code with
fields $c$ and $d$ is accessible''; $\mathrm{RCall}(c) =
\cA\cdot\lambda e.\,\neg(\Acc\cdot e) \vee \mathrm{RC}(c, e)$, ``$\varphi(c,
\cdot)$ carries accessible arguments to accessible values'', whose
antecedent appears as a negated disjunct, not an implication;
$\mathrm{rkAll}(r)$ = ``every code of rank $r$ is accessible'';
$\mathrm{mL}(c, d) = \Acc\cdot d \wedge \mathrm{RC}(c, d)$;
$\mathrm{PA}(x, s)$ = ``every valid principal code below $x$ of rank
$s$ is accessible''; $\mathrm{kAll}(p)$ = ``every valid sum with head
$p$ is accessible'', with the variants $\mathrm{kTAll}(p, t)$ (rest
$t$) and $\mathrm{kRk}(p, r)$ (rank $r$).  For the one-claim theorem:
$\mathcal O$ is the $\mu$-atom of a body over tagged points, $\langle
0, a\rangle$ for ``$a$ is a notation'' and $\langle 1, \langle a,
b\rangle\rangle$ for ``$a$ is below $b$'', with seven clauses: the
zero code is a notation; the successor of a notation is a notation;
a limit code, which carries an index tree, is a notation when, for
every $n$, the $n$-th value of the decoded sequence exists, is a
notation, and is below the $(n+1)$-th; and four clauses for the
order, that $a$ is below its successor, that below is transitive
through a successor, that every value of a limit's sequence is below
the limit, and that below is transitive through a limit.  The
decoded sequence is the index tree compiled and applied, so
membership of a limit code says that a primitive recursive sequence
of notations increases; $\gamma_0$ (\isn{gamma0C}) is the limit
code of the sequence of partial sums $g_0 = 0$, $g_{n+1} = g_n +
\omega^{\gamma_n}$ with $\gamma_n$ the iterates of
$\varphi(\cdot, 0)$ from $0$; the \emph{fundamental sequence} of a limit code $a$ is a compiled
tree $k \mapsto a[k]$ of codes below $a$, defined by the shape of $a$
(sums on the last term, $\varphi(0, b)$ through $\omega^{b[k]}\cdot
k$, $\varphi(a', b)$ through iterates of $\varphi(a'[k], \cdot)$), with
the \emph{internal cofinality} lemma that every canonical $y < a$ lies
below some $a[k]$ at an explicitly computed index; the
\emph{canonicalizer} is a compiled tree sending a valid code to the
canonical code of its value, the identity on canonical codes;
$\mathrm{DenV}(a)$ = ``every valid $b$ whose canonical form lies below
$a$ is accessible''; and the motive
$\mathrm{moGam}$ of the $\mathcal O$-induction conjoins ``whenever the
point is the code of $x + \omega^a$ with $a$ canonical and valid,
$\mathrm{DenV}(a)$'' with ``the point is not $\gamma_0$, or
$\mathrm{DenV}(\gamma_n)$ for every $n$''.

\paragraph{Kit.}
Ground induction at a proved accessibility (\isn{gi\_TI}) and relative
transfinite induction along the number order, as in
Appendix~\ref{app:cnf}; universal introduction at a fresh number and
elimination; disjunctive syllogism; case analysis on a decided term;
the shape trichotomy of Veblen codes (\isn{vk\_shape\_cases});
decidedness of validity and comparison at numbers; the laws taking
validity apart for a principal and for a sum (\isn{vk\_V\_P},
\isn{vk\_V\_S}); the unfolding of the comparison for each pair of
shapes; introduction and elimination of accessibility at a point
(\isn{vk\_acc\_I}, \isn{vk\_acc\_E}), of $\mathrm{vOp}$, and of each
motive; a zero code is accessible; accessibility transports along an
equation.  Two lemmas carry the content: \isn{vk\_K}, that every sum
over an accessible, valid, principal head is accessible; and
\isn{vk\_Lgen}, the principal step relative to a schema: from $c$ and
$d$ numbers with $\Acc\cdot d$, and the premise that in every
extension, for every $c'$ with $\mathrm{vOp}\,c'\,c$ and every
accessible $e'$, $\mathrm{RC}(c', e')$ holds, conclude $\mathrm{RC}(c,
d)$.

\paragraph{The equivalence, left to right.}
\begin{enumerate}
\item Assume $\WF$ derivable; fix $\Delta$, $d$, $X$ and the premise
  $\Acc\cdot d, \Delta \ent X$.
\item Weaken $\WF$ to $\Delta$ and instantiate at $d$: $\Delta \ent
  \neg(\mathrm{Nat}\cdot d) \vee \Acc\cdot d$.
\item $\mathrm{Nat}\cdot d$ holds at a number, so disjunctive
  syllogism gives $\Delta \ent \Acc\cdot d$.
\item Cut against the premise: $\Delta \ent X$; right-disjunct
  introduction gives the conclusion.
\end{enumerate}

\paragraph{The equivalence, right to left.}
Assume $\mathrm{AccBiv}$.  \emph{Lemma} (\isn{vk\_RCall\_main}): from
$\Gamma \ent c\,\mathbf N$ and $\Gamma \ent \Acc\cdot c$, $\Gamma \ent
\mathrm{RCall}(c)$.  Proof by ground induction along $\mathrm{vOp}$ at
$c$ with motive $\mathrm{RCall}$:
\begin{enumerate}
\item In the step at $z$, with the hypothesis $\mathrm{RCall}(c')$ for
  every predecessor $c'$ of $z$, introduce the universal at a fresh
  number $d$; the goal is $\neg(\Acc\cdot d) \vee \mathrm{RC}(z, d)$.
\item Under the hypothesis $\Acc\cdot d$, prove $\mathrm{RC}(z, d)$ by
  \isn{vk\_Lgen} at $z$ and $d$; its premise is discharged from the
  induction hypothesis: for $c'$ a predecessor of $z$ and $e'$
  accessible, $\mathrm{RCall}(c')$ instantiated at $e'$ and disjunctive
  syllogism give $\mathrm{RC}(c', e')$.
\item \emph{The single use of $\mathrm{AccBiv}$}: apply it to $\Delta$,
  $d$ and $X = \mathrm{RC}(z, d)$, with the hypothetical derivation of
  step 2, to obtain the disjunction and close the universal.
\end{enumerate}
\emph{Main proof.}
\begin{enumerate}
\item Strong induction on the rank with motive $\mathrm{rkAll}(r)$; in
  the step at $r$ the hypothesis gives, for any number $u$ of rank
  below $r$, $\Acc\cdot u$.
\item Introduce the universal at a number $x$; the goal $\neg(\mathrm{rank}
  \,x = r) \vee \Acc\cdot x$ reduces, the guard being decided, to
  $\Acc\cdot x$ under the hypothesis $\mathrm{rank}\,x = r$.
\item Split on validity.  If $x$ is invalid it has no predecessor, so
  it is accessible vacuously.  If valid, split by shape: zero is
  accessible; for a principal $x = \varphi(c, d)$ the guard gives $c$
  and $d$ of rank below $r$, so both are accessible by step 1, the
  lemma at $c$ gives $\mathrm{RCall}(c)$, its instance at $d$ and
  disjunctive syllogism give $\mathrm{RC}(c, d)$, and its elimination
  at $x$ gives $\Acc\cdot x$; for a sum $x = p + t$ the guard gives $p$
  of rank below $r$, hence accessible, and valid and principal, and
the instance \isn{vk\_K\_at} of \isn{vk\_K} at $p$ gives $\Acc\cdot x$.
\item Conclude $\WF$: at a number $x$, instantiate step 1 at
  $\mathrm{rank}\,x$ and at $x$, discharge the guard by reflexivity,
  and introduce the right disjunct.
\end{enumerate}
\emph{Proof of \isn{vk\_K}} for a fixed accessible, valid, principal
$p$.  (i) The tail step: if $z$ is a valid sum with head $p$ and
accessible rest, then $z$ is accessible, by ground induction on the
rest with motive $\mathrm{kTAll}(p, \cdot)$; in the step, a predecessor
$w$ of such a $z'$ is split by shape, and the comparison laws for a
principal or a sum against a sum reduce each case to the
accessibility of $p$, to an equation, or to the hypothesis at a
smaller rest.  (ii) Strong induction on the rank of the sum with
motive $\mathrm{kRk}(p, \cdot)$: the rest of a valid sum with head $p$
is accessible by the normal-form condition, which places it below
$p$, equal to $p$, or a sum with head $p$ of smaller rank, and the
tail step gives the sum.  (iii) Instantiate at the sum's rank.
\emph{Proof of \isn{vk\_Lgen}}: ground induction on $\Acc\cdot d$ with
motive $\mathrm{mL}(c, \cdot)$.  In the step at $z$, the first conjunct
holds because every predecessor satisfies the motive's first conjunct;
for the second, take a valid principal $x = \varphi(c, z)$ and show
$\Acc\cdot x$ by a side strong induction on a rank $s$ with motive
$\mathrm{PA}(x, s)$, whose step unfolds ``principal below principal''
into three disjuncts: $F_1 p' < c$ with $F_2 p' < x$, where $F_2 p'$ is
accessible by a helper (every valid $e < x$ of smaller rank is
accessible: zero outright, principal by the side hypothesis, a sum
through its head and \isn{vk\_K\_at}) and the schema premise then
applies; $F_1 p' = c$ with $F_2 p' < z$, by the outer hypothesis at
$F_2 p'$; or $p' < z$, by accessibility of $z$; the predecessors of
$x$ are then zero, principal (covered by the side induction) or sums
whose head is below $x$ (covered by the side induction and
\isn{vk\_K}).

\paragraph{The one-claim theorem.}
$\mathcal O$-induction (\isn{o\_ind}, the rule at the body of $\mathcal
O$ with a two-part motive) at $\gamma_0$ with motive $\mathrm{moGam}$.
\begin{enumerate}
\item Zero and successor codes: the first conjunct holds by the laws
  for those shapes, and they differ from $\gamma_0$, so the second
  conjunct's left disjunct holds.
\item Limit code with the induction hypothesis for the sequence: the
  first conjunct passes to the limit (\isn{g\_moAccV\_lim}), using that
  at a point coding $x + \omega^a$ the sequence is the chain whose
  $(k+1)$-th value codes $x_k + \omega^{a[k]}$ (graph functionality),
  so the hypothesis gives $\mathrm{DenV}(a[k])$ for every $k$, and
  \isn{vden\_limit} gives $\mathrm{DenV}(a)$: for a valid $b$ whose
  canonical form lies below $a$, the internal cofinality of the
  fundamental sequence (\isn{vfs\_cofinal}, with the descent and
  canonicity laws of the sequence) places that canonical form below
  some $a[k]$, and $\mathrm{DenV}(a[k])$ applies; a zero exponent is the
  case \isn{vden\_zero}.  For the second conjunct, case on the
  decided equation with $\gamma_0$: if equal, the sequence is
  $\gamma_0$'s, its $(k+1)$-th value codes $g_k + \omega^{\gamma_k}$, and
  the hypothesis gives $\mathrm{DenV}(\gamma_k)$ for every $k$.
\item Extract, at $\gamma_0$, $\mathrm{DenV}(\gamma_n)$ for every $n$.
\item $\WF$: at a number $x$, if $x$ is valid let $c$ be its canonical
  form; the bound $c < \gamma_{c+1}$ (\isn{gm\_bound}, by rank
  induction) and $\mathrm{DenV}(\gamma_{c+1})$ give $\Acc\cdot x$; if
  invalid, $x$ is accessible vacuously.
\end{enumerate}
The inductions are $\mathcal O$-induction once, at the claimed code,
and rank and number inductions along the number order in the
canonicalizer, fundamental-sequence and bound lemmas; no induction
along $\mathrm{vOp}$ occurs.  The build-time check computes, through
the proof terms, the transitive dependencies of the theorem and of
the accessibility lemma behind it, exempting the current theory, and
fails if any dependency is named by a forbidden family (the
well-foundedness theorems, relative transfinite induction except at
the number order, the classical warrant, bivalence, the $\mathcal
O$-membership lemmas that consume well-foundedness) or states anything
about $\mathrm{vOp}$ together with well-foundedness, transfinite
induction or bivalence; it also requires $\mathcal O$-induction and
the cofinality lemma to be present; and a control theorem that uses a
forbidden lemma is checked to be flagged by both tests.  The test is
by names and statement shapes; it is evidence that no such induction
is used, and a reading of the proofs confirms it, but it is not a
proof-term search for instances of the rule at the Veblen order.

\paragraph{The one-rule system and its soundness.}
\cref{thm:price} is proved in an abstract calculus whose derivations
are those of \iga together with one more rule, bivalence of
accessibility at the Veblen order: from $\Delta \ent d\,\mathbf N$ and
$\Acc\cdot d, \Delta \ent X$ conclude $\Delta \ent \neg(\Acc\cdot d)
\vee X$ (\isn{accbivS}, in the calculus \isn{giv\_calc}).  Its
soundness for the omega model is \iga's soundness with one new case,
and the new case is exactly the statement that every instance of the
rule is sound for the Veblen order, which the classical warrant
supplies through \cref{thm:biv-wf}; consistency follows as for \iga.
The well-foundedness theorem is the right-to-left direction of the
equivalence replayed in the calculus with the rule present rather
than admitted, which the abstract calculus makes one instantiation.

\paragraph{The triangle.}
The first conjunct is the equivalence.  For the second, the forward
half (\isn{o\_gamma0}) derives $\gamma_0 \in \mathcal O$ from $\WF$: every
partial sum $g_n$ is a notation, by number induction whose step
consumes $\WF$ through transfinite induction along $\mathrm{vOp}$; the
sums increase; and the limit rule of $\mathcal O$ gives the limit code.
The backward half is the one-claim theorem at the empty context.

\paragraph{Lemmas a re-mechanizer needs.}
The \iga calculus with universal introduction at fresh variables,
elimination, weakening, cut, disjunction introduction, disjunctive
syllogism, case analysis on decided terms, ex falso and existential
elimination; \isn{gi\_TI}, relative transfinite induction at the
number order, rank and number induction, and $\mathcal O$-induction;
the Veblen kit named above; for the one-claim theorem, the code
builders of $\mathcal O$ with their injectivity and distinctness, graph
functionality of the decoded sequence, the canonicalizer facts, the
fundamental sequences with their internal cofinality, the bound, and
the algebra of $\mathrm{DenV}$.

\section{Goodstein's theorem}
\label{app:goodstein}

Files: \isn{IGA\_Goodstein.thy} (the HOL machinery, the index trees,
the internal terms, and the theorem under the decrease hypothesis),
\isn{IGA\_GoodsteinDec.thy} (the derivation of the decrease lemma),
\isn{RGA\_Kit/Cov\_Induct.thy} (the course-of-values induction
pattern that the decrease theory re-proves with a well-formedness
hypothesis).

\paragraph{Statements.}
\isn{gDec\_derivable}: $\emptyset \ent \mathrm{gDec}$ in \rga.
\isn{iga\_goodstein}: $\emptyset \ent \mathrm{gGood}$ in \iga.
Companions: \isn{iga\_goodstein\_of\_decrease}, $\mathrm{gDec}, \emptyset
\ent \mathrm{gGood}$; and \isn{goodstein\_hol\_unconditional}, the HOL
statement that every Goodstein sequence reaches zero, read off
through omega soundness.  Truncated subtraction $a \dotminus b$ is
computed by the recursor (\isn{gapSub}), so $a \dotminus b = 0$ says $a
\le b$ and $(a+1) \dotminus b = 0$ says $a < b$.

\paragraph{Definitions, HOL side.}
The leading exponent $\mathrm{lgc}(b, m)$ is the unique $L$ with $b^L
\le m < b^{L+1}$ for $b \ge 2$ and $m > 0$.  The hereditary
representation $\mathrm{hord}(b, m)$ is the normal form $Z$ when $m =
0$ or $b < 2$, and otherwise $W(\mathrm{hord}(b, e), \mathrm{hord}(b, m
- b^e))$ with $e = \mathrm{lgc}(b, m)$: the leading power is split off
one copy at a time.  Evaluation at a base reads a normal form with
$\omega$ replaced by the base.  The Goodstein step from step index $k$
and value $m$ evaluates $\mathrm{hord}(k+2, m)$ at base $k+3$ and
subtracts one; a state is the pair $\langle k, m\rangle$; the run's
value after $j$ steps is $\mathrm{grun}(k, m, j)$.  The HOL decrease
(\isn{hord\_gstep\_lt}): for $m > 0$ the normal form after the step is
below the one before.

\paragraph{Definitions, index trees and internal terms.}
A generic course-of-values tree (\isn{prCov}) computes, at $(x, n)$, a
history list whose entry $i$ is $V(i)$, where $V(0) = 0$ and $V(m)$ is
obtained by a combining tree from $\langle x, m\rangle$ and the two
earlier values $V(E(x, m))$ and $V(T(x, m))$, both children being
below $m$.  Entry $i$ is read by $n - i$ second projections and one
first projection.  Its instances: the canonical code
(\isn{prHordC}), with $E$ the leading exponent and $T$ the tail $m -
b^{\mathrm{lgc}(b, m)}$, whose first projection is the canonical code
of $\mathrm{hord}(b, m)$; and the base bump (\isn{prBumpC}), whose
first projection is $\mathrm{hord}(b, m)$ evaluated at a base $c$.  The
Goodstein trees (\isn{prGord}, \isn{prGstep}, \isn{prRun},
\isn{prGrun}) compute the ordinal, the stepped state, the $j$-fold run
and the run's value.  Internal terms are compiled trees: $\mathrm{gordT}$,
$\mathrm{gstepT}$, $\mathrm{mT}$ (the value component), $\mathrm{runT}$,
$\mathrm{grunT}$, $\mathrm{pairT}$; $\mathrm{hiT}$, $\mathrm{loT}$ the
projections; the $\eps$ order $\mathrm{gOp}$ is the order of
Appendix~\ref{app:cnf} at these projections.  Termination from $p$,
$\mathrm{gTerm}(p)$, is ``some step of the run from $p$ has value $0$''
(a negated universal); $\mathrm{gDec}$ is $\cA\cdot\lambda q.\,
\mathrm{mT}\cdot q = 0 \vee \mathrm{gOp}\cdot(\mathrm{gordT}\cdot
(\mathrm{gstepT}\cdot q))\cdot(\mathrm{gordT}\cdot q)$; $\mathrm{gGood}$
is $\cA\cdot\lambda n.\,\mathrm{gTerm}(\mathrm{pairT}\cdot 0\cdot n)$; and
the motive $\mathrm{MB}(z)$ is ``for every start state $p$ and step
count $i$, if the state reached from $p$ after $i$ steps has ordinal
code $z$, then the run from $p$ terminates''.  At open arguments the
proof abbreviates: $\mathrm{Pr}$, $\mathrm{Px}$, $\mathrm{Py}$ for
pairing and projections; $\mathrm{Gc}(b, m)$ for the canonical code of
$m$ in base $b$; $\psi(b, c, m)$ for the bump from base $b$ to base
$c$, and $\psi_S(b, m)$ for $c = b+1$; $\mathrm{lgcT}$, $\mathrm{tailT}$,
powers and products; $L$ and $V$ for the comparison and validity of
Appendix~\ref{app:cnf} at the projections.

\paragraph{Proof of the decrease lemma, in \rga.}
Steps 1 to 3 evaluate the compiled trees at open arguments by number
induction; step 4 is the digit law; steps 5 to 9 are the four strong
inductions and the bound; steps 10 and 11 assemble the lemma.  Two
engines.  \emph{Engine A} (\isn{ind\_gen}): for a term family $P$
stable under substitution, from $\Gamma \ent P(0)$ and, in every
extension and at every number $k$, $P(k)$ implies $P(k+1)$, conclude
$P(n)$ for every number $n$; it is \rga's number induction at a fresh
variable.  \emph{Engine B} (\isn{par\_strong\_univ}, through \isn{cov\_parW}):
for a family $P(b, n)$ with a base parameter, from a step that, in
every well-formed context in which the base is a number at least $2$,
derives $P(b, x)$ from $P(b, y)$ for every $y < x$, conclude
$\emptyset \ent \cA\cdot\lambda b.\,(2 \le b) \rightarrow
\cA\cdot\lambda n.\,P(b, n)$, where $a \rightarrow b$ at a decided $a$
abbreviates $\neg a \vee b$; it is universal introduction on $b$,
implication introduction at the decided guard, universal introduction
on $n$, and course-of-values induction, which is number induction on
the bounded motive ``$P$ at every $k \le N$'' and which the decrease
theory proves in a form whose step carries the well-formedness of
its context (\isn{cov\_parW}, after the kit's
\isn{ir\_cov\_ind\_par}).  Its instances at a base $b \ge 2$ and a
number $n$ are read off by \isn{par\_inst}.
\begin{enumerate}
\item \emph{Arithmetic at open arguments} (Engine A): the compiled
  addition and truncated subtraction agree with the recursor forms;
  the product laws (zero, successor, commutativity, monotonicity); the
  power laws (positivity for base at least $1$, monotonicity,
  strictness, reflection, and $t < b^t$ for $b \ge 2$); zero test and
  conditional; the order test; and the Cantor projections at open
  arguments, $\mathrm{Py}(\mathrm{Pr}\,a\,b) = b$ and
  $\mathrm{Px}(\mathrm{Pr}\,a\,b) = a$, from the row search
  (\isn{Dy\_inv}, by induction with the motive that the triangular
  number of the row bounds the argument on both sides).
\item \emph{Powers and the leading exponent}: the invariant of the
  partial count by Engine A, hence the bounds $b^{\mathrm{lgcT}} \le m
  < b^{\mathrm{lgcT}+1}$, uniqueness of the exponent satisfying them,
  $\mathrm{lgcT}\,b\,m < m$, $\mathrm{tailT}\,b\,m < m$, and
  monotonicity of the exponent in the value, all for $b \ge 2$ and $m
  \ge 1$.
\item \emph{The course-of-values tree at open arguments}: the zero and
  successor equations of the history, the peeling law, the shifted
  lookup by Engine A, and the \emph{unfolding law} (\isn{cov\_unfold}):
  if both children of $n+1$ are at most $n$, the first projection at
  $n+1$ is the combining tree applied to $\langle x, n+1\rangle$ and the
  two children's values.  Its instances at a nonzero $m$ with $b \ge
  2$: $\mathrm{Gc}(b, m) = \mathrm{Pr}(1 + \max(\mathrm{rank}\,e,
  \mathrm{rank}\,t), \mathrm{Pr}(\mathrm{Gc}(b, e), \mathrm{Gc}(b, t)))$
  with $e$ the exponent and $t$ the tail (\isn{G\_pos}), and
  $\psi(b, c, m) = c^{\psi(b, c, e)} + \psi(b, c, t)$ (\isn{psi\_pos});
  both are $0$ at $m = 0$.
\item \emph{The digit law} (\isn{digits}, Engine A on $k$): for $b \ge
  2$, $r < b^e$ and $k + 2 \le b$, $\psi(b, c, b^e(k+1) + r) =
  c^{\psi(b, c, e)}(k+1) + \psi(b, c, r)$.  The motive carries the
  decided guard $k + 2 \le b$; the step unfolds $\psi$ once, whose
  leading exponent is $e$ by uniqueness and whose tail is $b^e k + r$.
\item \emph{Strict monotonicity of the bump} (\isn{M\_univ},
  \isn{M\_inst}; Engine B).  Motive: for every $m < n$, $\psi_S(b, m) <
  \psi_S(b, n)$; in the step at $x$, the hypothesis holds at every $y <
  x$.  Write $e_m, e_x$ for the exponents and $t_m, t_x$ for the tails
  of $m$ and $x$, and $C_x = (b+1)^{\psi_S(b, e_x)}$.  Split on $m = 0$:
  then $\psi_S(b, m) = 0$ and $\psi_S(b, x) \ge C_x \ge 1$.  Otherwise
  $e_m \le e_x$; split on equality.  If $e_m = e_x$, the leading terms
  agree, $t_m < t_x$, the hypothesis at $y = t_x$ gives $\psi_S(b, t_m)
  < \psi_S(b, t_x)$, and adding the leading term finishes.  If $e_m <
  e_x$, let $N = b^{e_m}(b-1) + (b^{e_m} - 1)$, so $N + 1 = b^{e_m+1}
  \le b^{e_x} \le x$ and $m \le N$; the hypothesis gives $\psi_S(b, b^{e_m} - 1) < \psi_S(b, b^{e_m})$
  and, by cases $m = N$ or $m < N$, $\psi_S(b, m) \le \psi_S(b, N)$; the digit law
  at $N$ and at $b^{e_m}$ then gives the bound $\psi_S(b, m) <
  (b+1)^{\psi_S(b, e_m) + 1}$ (\isn{psi\_UB}); the hypothesis at $y =
  e_x$ gives $\psi_S(b, e_m) < \psi_S(b, e_x)$, so the bound is at most $C_x
  \le \psi_S(b, x)$.
\item \emph{The upper bound} (\isn{psi\_UB\_full}, no induction): for
  $b \ge 2$ and $m \ge 1$, $\psi_S(b, m) < (b+1)^{\psi_S(b,
  \mathrm{lgcT}(b, m)) + 1}$, by the second case of step 5 with the
  instance of step 5 in place of the hypothesis.
\item \emph{Base change} (\isn{BC\_univ}, \isn{BC\_inst}; Engine B).
  Motive: $\mathrm{Gc}(b+1, \psi_S(b, n)) = \mathrm{Gc}(b, n)$.  At $x
  = 0$ both sides are $0$.  Otherwise let $y = \psi_S(b, x)$, $e$ and
  $t$ the exponent and tail of $x$, $c = b + 1$: the unfolding gives
  $c^{\psi_S(e)} \le y$, the bound gives $y < c^{\psi_S(e)+1}$, so
  uniqueness gives $\mathrm{lgcT}(c, y) = \psi_S(e)$ and the tail of
  $y$ is $\psi_S(t)$; the hypothesis at $e$ and at $t$ and the
  unfolding of $\mathrm{Gc}$ at $(c, y)$ and at $(b, x)$ give equal
  right-hand sides.
\item \emph{Monotonicity of the canonical code} (\isn{MONO\_univ},
  \isn{MONO\_inst}; Engine B).  Motive: for every $m < n$, $L(\mathrm{Gc}
  (b, m), \mathrm{Gc}(b, n))$.  Fold the goal, so that the unfolded
  clause of $L$ suffices; the code reader gives rank, head and tail of
  a canonical code.  At $m = 0$ the first disjunct holds (rank $0$
  against nonzero rank).  Otherwise the rank guards hold for both
  codes; if the exponents agree, the hypothesis at the tails gives the
  tail comparison; if $e_m < e_x$, the hypothesis at the exponents gives
  the head comparison.
\item \emph{Validity of the canonical code} (\isn{VALID\_univ},
  \isn{VALID\_inst}; Engine B).  Motive: $V(\mathrm{Gc}(b, n))$.  At $x
  = 0$ the rank is $0$.  Otherwise the guards hold, head and tail are
  valid by the hypothesis, and the spine condition is by cases on the
  tail: zero rank, or its exponent equals $e_x$ (equal heads), or is
  below it (the head comparison by step 8).
\item \emph{Decrease at an open state} (\isn{dec\_open}): from $q$ a
  number with $\mathrm{Py}\,q \ne 0$, $\mathrm{gOp}\cdot(\mathrm{gordT}
  \cdot(\mathrm{gstepT}\cdot q))\cdot(\mathrm{gordT}\cdot q)$.  Evaluate:
  the ordinal of $q$ is $\mathrm{Gc}(b, m)$ with $b = \mathrm{Px}\,q + 2$
  and $m = \mathrm{Py}\,q$; the next state is $\mathrm{Pr}(\mathrm{Px}\,q
  + 1, p - 1)$ with $p = \psi_S(b, m)$; so its ordinal is $\mathrm{Gc}(b
  + 1, p - 1)$.  Since $p \ge (b+1)^{\psi_S(b, \mathrm{lgcT}(b, m))} \ge 1$ by the
  unfolding and the positivity of powers, $p - 1 < p$; step 8 at base $b + 1$
  gives $L(\mathrm{Gc}(b+1, p-1), \mathrm{Gc}(b+1, p))$ and step 7
  rewrites the second code to $\mathrm{Gc}(b, m)$; step 9 at both bases
  gives validity; fold into $\mathrm{gOp}$ and transport back through
  the evaluations.
\item \emph{The universal}: introduce $q$ with $q\,\mathbf N$; $\mathrm{mT}
  \cdot q = \mathrm{Py}\,q$ is a number, so the equation with $0$ is
  decided; in the zero case take the left disjunct, otherwise the
  right by step 10.
\end{enumerate}

\paragraph{Proof of Goodstein's theorem, in \iga.}
\isn{iga\_goodstein} is the hypothetical theorem cut against the
embedding of the \rga derivation of the decrease lemma
(\isn{entails\_embed}).  The hypothetical theorem, with $\Gamma =
\{\mathrm{gDec}\}$:
\begin{enumerate}
\item $\Gamma \ent \WF(\mathrm{Nat}, \mathrm{gOp})$: the instance of
  Appendix~\ref{app:cnf}'s theorem at the compiled projections
  (\isn{gi\_WF\_epsilon0}), weakened.
\item Transfinite induction (\isn{gi\_WF\_TI}) along $\mathrm{gOp}$ on
  the domain $\mathrm{Nat}$ with motive $\mathrm{MB}$: it suffices to
  show progressiveness, that in every well-formed extension $\Delta$
  and at every number $z$, $\mathrm{MB}(z)$ follows from $\mathrm{MB}(y)$
  for every $y$ with $\mathrm{gOp}\cdot y\cdot z$.
\item \emph{Closure} (\isn{goodstein\_closure}).  Introduce $p$ and $i$;
  let $q$ be the state after $i$ steps from $p$, a number.  Split, decided,
  on whether its ordinal is $z$; if not, the left disjunct of
  $\mathrm{MB}$'s matrix.  If so, split on $\mathrm{mT}\cdot q = 0$: if
  zero, the run from $p$ terminates at step $i$ (the run's value is the
  state's value component, and the existential is introduced as a
  negated universal); if nonzero, instantiate $\mathrm{gDec}$ at $q$,
  disjunctive syllogism gives $\mathrm{gOp}\cdot y\cdot(\mathrm{gordT}
  \cdot q)$ for $y$ the ordinal of the next state, transported along
  the equation to $\mathrm{gOp}\cdot y\cdot z$ (\isn{g\_app\_transport}), the hypothesis gives $\mathrm{MB}(y)$,
  and its instance at $p$ and $i + 1$, whose guard the run equation
  discharges, gives termination.
\item Steps 2 and 3 give $\Gamma \ent \cA\cdot\lambda x.\,\neg(\mathrm{Nat}
  \cdot x) \vee \mathrm{MB}(x)$.
\item At a number $n$ let $p = \mathrm{pairT}\cdot 0\cdot n$ and $a$ the
  ordinal of $p$ after $0$ steps; instantiate step 4 at $a$, whose
  domain guard $\mathrm{Nat}\cdot a$ holds at a number
  (\isn{gi\_natD\_inst}), and $\mathrm{MB}(a)$ at $p$ and $0$, whose
  guard $\neg(a = a)$ reflexivity refutes; termination converts to the
  body of $\mathrm{gGood}$ at $n$.
\end{enumerate}
The HOL reading: omega soundness with the empty context gives the
omega-truth of $\mathrm{gGood}$, which \isn{omv\_gGood\_iff} reads as
``for every $n$ some step of the run from $\langle 0, n\rangle$ is
$0$''.

\paragraph{Lemmas a re-mechanizer needs.}
\rga's number induction, universal introduction at an eigenvariable
and elimination, case split on a decided term with decidedness of
equality, implication introduction at a decided antecedent and modus
ponens, equality transport and conversion; the verified compiler's
totality and compositional evaluation at open arguments; the two engines, with \isn{cov\_parW}, \isn{par\_inst} and the
bounded-motive helpers of the kit (\isn{acc\_step\_p},
\isn{acc\_descent\_p}); the internal arithmetic of steps 1 and 2; the
course-of-values laws of step 3; the digit law, the bound and the four inductions; the
evaluations of the Goodstein terms; and, on the \iga side,
\isn{gi\_WF\_epsilon0}, \isn{gi\_WF\_TI}, \isn{gi\_natD\_inst},
\isn{g\_app\_transport}, the run equations, and the embedding of \rga
into \iga.

\section{Truth exactness}
\label{app:truth}

File: \isn{IGA\_Truth.thy}; supporting machinery in
\isn{IGA\_Positive.thy} (the oracle reading, the set form of the
induction lemma, \isn{osem\_term}), \isn{IGA\_Lfp.thy}
(\isn{osem\_gfix\_omv}) and \isn{RGA\_Omega\_Model.thy}.

\paragraph{Statement.}
For every term $t$, every result $r$ and every head form $w$, with no
hypothesis on $t$: $\Tr\cdot\langle 0, \langle\lceil t\rceil, r\rangle
\rangle$ is omega-true if and only if $t$ has value $r$
(\isn{tr\_omv\_iff}), and $\Tr\cdot\langle 1, \langle\lceil t\rceil,
\mathrm{hdc}(w)\rangle\rangle$ is omega-true if and only if $t$ reaches
the head form $w$ (\isn{tr\_omv\_iff\_hd}).

\paragraph{Definitions.}
\emph{The omega model's twenty-one rules.}  Head forms: an
abstraction, the recursor awaiting three, two or one arguments, and
the universal combinator.  Eight head rules: an abstraction, the
recursor and the universal combinator are their own head forms; applying a partial recursor adds an argument
(two rules); if $t$ reaches $\lambda b$ and $b[u]$ reaches $w$ then
$t\cdot u$ reaches $w$; if $t$ reaches the two-argument recursor with
step terms $g$, $h$ and $m$ has value $0$ then $t\cdot m$ reaches what
$g$ reaches; and likewise at value $k+1$ with the reduct $h\cdot\bar
k\cdot(R\cdot g\cdot h\cdot\bar k)$.  Thirteen value rules: zero and
successor; equality, with value ``the two values are equal''; negation
true and false; disjunction true-left, true-right and false; the value
forms of the three reduction rules; the universal true when every
instance has value $1$ and false when some instance has value $0$.
\emph{Head-form codes} $\mathrm{hdc}$ tag the five head forms and
pair them with the codes of their components; they are injective and
invertible by tag.  \emph{Code shapes}: the code of an operator node
with zero, one or two children, with inversion lemmas saying that a
number of that shape coding a term codes a node of that operator with
the children's codes.  \emph{Code builders}: closed \rga terms
computing codes from codes, one per constructor, together with
compiled substitution on codes (\isn{bsubC}), the numeral coder, and
the recursor-reduct coder; their omega values at codes and at
arbitrary numbers follow from the value of a compiled binary primitive
recursive function at terms with values.  \emph{The point} $\langle
\mathrm{tag}, \langle\mathrm{code}, \mathrm{result}\rangle\rangle$ is
built by compiled pairing.  \emph{The body} is a list disjunction over
twenty-two clauses, one per omega rule with the equality rule split
into an equal-values clause and a distinct-values clause; each clause
is a nest of existentials over the components, with equality guards
``the point is the builder applied to the bound codes'' and recursive
atoms $a\cdot m$ at bound variables $m$ equated to premise points.
The clauses, with the rule each transcribes, the number of
existential binders, and the number of recursive atoms:
\begin{center}
\footnotesize
\begin{tabular}{@{}llcc@{}}
\toprule
clause & rule & binders & atoms \\
\midrule
H1--H3 & an abstraction, the recursor, the universal combinator reach themselves & 1, 0, 0 & 0 \\
H4, H5 & the recursor applied to one, two arguments & 3, 4 & 1 \\
H6 & beta, head form & 6 & 2 \\
H7, H8 & recursor at zero, at successor, head form & 8, 9 & 3 \\
V1, V2 & zero, successor & 0, 3 & 0, 1 \\
V3, V4 & equality, equal values; distinct values & 5, 6 & 2 \\
V5, V6 & negation true, false & 2 & 1 \\
V7--V9 & disjunction true-left, true-right, false & 3, 3, 4 & 1, 1, 2 \\
V10 & beta, value & 6 & 2 \\
V11, V12 & recursor at zero, at successor, value & 8, 9 & 3 \\
V13 & universal true (an inner universal over an existential) & 3 + 1 & 2 \\
V14 & universal false & 5 & 2 \\
\bottomrule
\end{tabular}
\end{center}
The clause for disjunction false (V9), for instance, reads: there are $k_p$,
$k_q$, $m_1$, $m_2$ with the point equal to the code of $p \vee q$ at
value $0$, $m_1$ the point of $p$ at value $0$, $a\cdot m_1$, $m_2$ the
point of $q$ at value $0$, and $a\cdot m_2$.  The universal-true clause
has, under its existentials, a universal whose body is itself an
existential with a recursive atom.  The body is positive, and the abstraction of the body over its
two variables is closed (\isn{pos\_trPhi}, \isn{closed\_trPhi}).  \emph{The
two target sets}: $\mathrm{trS}$, for soundness, contains a point when
for every term $t$ its code part codes, the tag-$0$ reading gives
$t$ the point's result as value, and the tag-$1$ reading gives $t$ a
head form with the point's result as code; points whose code part
codes no term, or with tag at least $2$, are unconstrained.
$\mathrm{trL}$, for completeness, is the truth set of $\Tr$.

\paragraph{Proof.}
\begin{enumerate}
\item \emph{Soundness at every point} (\isn{tr\_omv\_point}): if
  $\Tr\cdot\bar p$ is true then $p \in \mathrm{trS}$.  Apply the set
  form of Appendix~\ref{app:gfix}'s induction lemma with $S =
  \mathrm{trS}$, using positivity and closedness of the body; the set
  form is the one needed because no predicate term for $\mathrm{trS}$
  is available before the theorem.  The obligation is closure, step 2.
\item \emph{Closure of $\mathrm{trS}$ under the oracle reading}
  (\isn{tr\_body\_closed}).  An oracle-true list disjunction has an
  oracle-true member, since the empty disjunction $0 = 1$ is never true
  (\isn{omv\_zero\_one\_never}), so some clause reads true; twenty-two cases, one
  lemma each (\isn{sound\_clH1} to \isn{sound\_clV14}), each by the same
  five moves, written out for disjunction false: open the
  existentials, obtaining numbers $k_p, k_q, m_1, m_2$ at which every
  conjunct reads true; read the guards, which are equalities between a
  numeral and a builder term and so say the builder has the numeral's
  value; compute the builders' values from the value lemmas and
  identify, by determinacy of values, $p$, $m_1$, $m_2$ with the
  explicit points; read the atoms, which say $m_1, m_2 \in \mathrm{trS}$; decode
  and apply the omega rule: for any $t$ whose code is the disjunction
  node, the inversion gives $t = p \vee q$ with the bound codes, the
  memberships give $p$ and $q$ false, and the rule gives $t$ false.
  Clause-specific features: the clauses with no atoms decode a
  constant or an abstraction; head-form premises decode the component
  through the head-form inversion; the beta clauses use that compiled
  substitution on codes computes the code of the substitution
  (\isn{bsub\_code\_correct}); the recursor clauses decode the reduct
  code; the distinct-values equality clause reads its guard $\neg(v_1
  = v_2)$ as inequality of numbers; the universal-true clause
  instantiates the inner universal at every $j$ and reads, with the
  numeral coder's value, that every instance point is in
  $\mathrm{trS}$, then applies the universal rule; the universal-false
  clause uses one instance.
\item \emph{Soundness}: step 1 at the two point shapes, read through
  the two characterizations of $\mathrm{trS}$ (\isn{trS\_V},
  \isn{trS\_H}).
\item \emph{The oracle-to-term step} (\isn{tr\_compl\_clause}): if a
  clause reads true at the environment of the point $\bar p$ with
  atoms in $\mathrm{trL}$, then $p \in \mathrm{trL}$.  Lift the clause
  to the whole disjunction; the hypothesis is then the premise of
  \isn{osem\_gfix\_omv} of Appendix~\ref{app:lfp}, which needs neither
  positivity nor closedness.
\item \emph{Twenty-two completeness lemmas} (\isn{compl\_clH1} to
  \isn{compl\_clV14}): if the premise points of a rule are in
  $\mathrm{trL}$ then its conclusion point is.  Each lemma is step 4 with the
  matching clause, supplying the witnesses for the existentials,
  proving the guards by the builders' value lemmas and the atoms by
  the atom clause from the memberships; the universal-true clause
  proves its inner universal by the oracle reading's universal clause.
\item \emph{Completeness} (\isn{tr\_omv\_complete\_all}): simultaneous
  rule induction on the omega relations with the motives ``the
  tag-$1$ point of $t$ and $w$ is in $\mathrm{trL}$'' and ``the tag-$0$
  point of $t$ and $r$ is in $\mathrm{trL}$''; the twenty-one cases are
  the lemmas of step 5 applied to the hypotheses, with one case split,
  the equality rule, by whether the two values agree; the
  universal-true case uses the hypothesis at every instance.
\item \emph{The equivalences}: step 3 and step 6; for head forms,
  injectivity of the head-form coding identifies the witness.
\end{enumerate}

\paragraph{Lemmas a re-mechanizer needs.}
The omega model with its determinacy and inversion lemmas; a term
coding with explicit node shapes and inversions, injectivity of
pairing, and the head-form coding with its inversion and injectivity;
the code builders as closed terms with their values, and the
code-level substitution law; the oracle reading with its inversions
(list disjunction and conjunction, existential, universal at any
instance, atom, equality and inequality guards) and introductions;
the set form of the induction lemma; \isn{osem\_term} and
\isn{osem\_gfix\_omv}; positivity and closedness of the body, by
simplification over the twenty-two clauses.

\section{Curry's paradox}
\label{app:curry}

File: \isn{IGA\_Curry.thy}.  The statements are about \rga's own
entailment (locale \isn{curry\_cond}) or about an arbitrary entailment
over \rga's terms (locale \isn{curry\_ext}); neither is about \iga.

\paragraph{Definitions.}
The locale \isn{curry\_cond} extends \rga with a term former
$\Rightarrow$ and four assumptions: the former preserves
well-formedness; it commutes with substitution at the binder's
variable, $(a \Rightarrow b)[u] = a[u] \Rightarrow b[u]$; modus ponens,
from $\Gamma \ent a \Rightarrow b$ and $\Gamma \ent a$ conclude $\Gamma
\ent b$; and conditional proof at every well-formed antecedent, from
$a, \Gamma \ent b$ conclude $\Gamma \ent a \Rightarrow b$.  The locale
\isn{curry\_ext} replaces \rga's entailment by an arbitrary relation
$\mathrm{ent}$ on contexts and terms, and assumes of it only the
hypothesis rule, head-step conversion in both directions, and the
same four properties of the former; contexts are finite sets, so a
hypothesis may be used twice.  The Curry term is $C = \fix(\lambda
x.\,x \Rightarrow 0 = 1)$ in either locale.

\paragraph{Statements.}
\isn{curry\_explosion}: $\emptyset \ent 0 = 1$.  \isn{curry\_inconsistent}:
the locale's assumptions are contradictory, from the previous
statement, the consistency of \rga (\isn{rga\_consistent}, by the
soundness of the certificate semantics) and $\ent \neg(0 = 1)$.
\isn{ext\_curry\_explosion}: $\mathrm{ent}(\emptyset, 0 = 1)$.

\paragraph{Proof, identical in both locales.}
\begin{enumerate}
\item $\lambda x.\,x \Rightarrow 0 = 1$ is well-formed, so $C$ is.
\item $C$ reaches $C \Rightarrow 0 = 1$ in two head steps: the fixed-point
  step, then a beta step after rewriting the result as a substitution
  into the former (the closed term $0 = 1$ is unchanged).  So from
  $\Gamma \ent C$ follows $\Gamma \ent C \Rightarrow 0 = 1$, and
  conversely, by conversion in the two directions.
\item $C, \emptyset \ent C$, by the hypothesis rule.
\item $C, \emptyset \ent C \Rightarrow 0 = 1$, from 3 by step 2.
\item $C, \emptyset \ent 0 = 1$, by modus ponens on 4 and 3; the
  hypothesis is used twice, as one member of the set.
\item $\emptyset \ent C \Rightarrow 0 = 1$, by conditional proof on 5.
\item $\emptyset \ent C$, from 6 by step 2.
\item $\emptyset \ent 0 = 1$, by modus ponens on 6 and 7.
\item For the inconsistency of the locale: step 8 and $\ent \neg(0 =
  1)$ contradict the consistency of \rga.
\end{enumerate}

\paragraph{Companions.}
\isn{liar\_explosion\_cp}: if conditional proof holds for the definable
implication $\neg a \vee b$ at every well-formed antecedent, then
$\emptyset \ent 0 = 1$.  With $L$ the Liar, $L \ent \neg L$ and $\neg L
\ent L$ by conversion; the assumption gives $\ent \neg L \vee \neg L$
and $\ent \neg\neg L \vee L$; disjunction elimination, with
double-negation elimination in one branch, gives $\ent \neg L$ and
$\ent L$, and negation elimination closes.  This argument uses
disjunction elimination and double-negation elimination and no modus
ponens for a former.  \isn{accbiv\_shape\_fails\_on\_liar}: the
bivalence shape with the Liar in place of accessibility fails, by the
instance at the empty context and $X = \neg L$.

\paragraph{Lemmas a re-mechanizer needs.}
The fixed-point step and the beta step, well-formedness of the
fixed-point former, conversion in both directions, the hypothesis
rule; and for the inconsistency statement only, the consistency of
\rga (\isn{rga\_consistent}) and $\ent \neg(0 = 1)$
(\isn{ir\_zero\_one\_ne}).  The requirement that the former commutes with substitution is,
in a named-variable presentation, that $\Rightarrow$ is a term former.

\clearpage
\section{The Theorems and Their Isabelle Names}
\label{sec:table}

Every statement cited in the text appears below with its name in the
development, its session (\textsf{RGA} and \textsf{RGA\_Kit}, the
base; \textsf{Ordinal} on \rga alone; \textsf{IGA}; \textsf{Combined}
for the combined systems; \textsf{ConOGA} and \textsf{OGA} for the two imports; \textsf{IGA\_NoGo}
for the instantiation of the no-go result), and its
viewpoint: grounded, a derivation in \rga or \iga; classical, a HOL
theorem about the system or the omega model; or def., a definition.

{\footnotesize
\setlength{\tabcolsep}{4pt}
\begin{longtable}{@{}p{0.30\textwidth}p{0.42\textwidth}p{0.08\textwidth}p{0.13\textwidth}@{}}
\toprule
Statement & Isabelle name & Session & View \\
\midrule
\endhead
\multicolumn{4}{@{}l}{\emph{The rule} (\S\ref{sec:positive}, \S\ref{sec:induction})}\\
positivity grammar; parametric grammar & \isn{pos}, \isn{neg}; \isn{posP}, \isn{pnexp} & IGA & def. \\
closed body: the grammars agree; stability & \isn{posP\_of\_pos\_closed}, \isn{posP\_subf2} & IGA & classical \\
fixed point of a body; unfolding & \isn{trmGfix}, \isn{gfix\_step1..3} & IGA & def., grounded \\
the Liars are not positive & \isn{liar\_not\_pos}, \isn{liar1\_not\_pos} & IGA & classical \\
the rule; closed form as an instance & \isn{pr\_gfix}, \isn{pr\_gfix\_closed} & IGA & def., grounded \\
induction schema & \isn{IGA\_Proof.gfix\_TI} & IGA & grounded \\
strong form of the rule & \isn{ir\_gfix\_strong\_par}, \isn{ir\_gfix\_strong}, \isn{gi\_gfix\_strong} & Combined & grounded \\
the number premise is not decorative & \isn{gfix\_nonum\_rule}, \isn{gfix\_nonum\_derives\_bot\_eq}, \isn{gfix\_nonum\_unsound} & IGA & def., classical \\
\rga embeds in \iga & \isn{entails\_embed} & IGA & grounded \\
odd positions winning & \isn{gi\_win\_odd} & IGA & grounded \\
winning positions nonzero & \isn{gi\_win\_nonzero} & IGA & grounded \\
provably winning iff odd & \isn{gi\_win\_iff} & IGA & classical \\
transport; oracle reading; monotone & \isn{gfix\_transport}, \isn{osem},\newline \isn{osem\_mono} & IGA & classical \\
parametric transport and induction & \isn{gfix\_transport\_par}, \isn{gfix\_omv\_ind\_par} & IGA & classical \\
omega-model induction, set and term forms (Lem.~\ref{lem:gfix-ind}) & \isn{gfix\_omv\_ind\_set}, \isn{gfix\_omv\_ind} & IGA & classical \\
soundness; consistency (Thm.~\ref{thm:sound}) & \isn{omega\_sound}, \isn{gi\_consistent} & IGA & classical \\
truth set is the lfp (Thm.~\ref{thm:lfp}) & \isn{gfix\_omv\_eq\_lfp} & IGA & classical \\
accessibility denotes HOL's acc & \isn{acc\_lfp\_eq} & IGA & classical \\
games complete and sound & \isn{win\_lfp\_eq}, \isn{win\_omv\_iff} & IGA & classical \\
the class of truth sets & \isn{gfix\_class\_eq\_lfp\_class},\newline \isn{gfix\_class\_eq\_omv\_class} & IGA & classical \\
closure properties & \isn{fixClass\_Un} and companions & IGA & classical \\
\midrule
\multicolumn{4}{@{}l}{\emph{Well-foundedness} (\S\ref{sec:wf})}\\
accessibility, WF, Ord as terms & \isn{trmAcc}, \isn{trmWF}, \isn{trmOrd} & Ordinal & def. \\
unfolding rules & \isn{acc\_unfoldI}, \isn{acc\_unfoldE} & Ordinal & grounded \\
denotation; never false (Thm.~\ref{thm:acc-sem}) & \isn{acc\_omv\_iff}, \isn{acc\_omv\_never\_false} & IGA & classical \\
decided case & \isn{acc\_omv\_iff\_dec} & IGA & classical \\
WF never refutable & \isn{WF\_not\_refutable} & Ordinal & classical \\
soundness bridge (Thm.~\ref{thm:wf-sound}) & \isn{gi\_wf\_sound}, \isn{gi\_wf\_sound\_true} & IGA & classical \\
\rga: $\omega$, Ord($\omega$), towers & \isn{rga\_WF\_omega}, \isn{rga\_Ord\_omega},\newline \isn{rga\_WF\_tower} & Ordinal & grounded \\
relative TI schema & \isn{relTI} & Ordinal & classical \\
TI as a theorem of \iga & \isn{gi\_TI}, \isn{gi\_WF\_TI} & IGA & grounded \\
TI at an open order & \isn{parWF}, \isn{gi\_WF\_TI\_par} & IGA & grounded \\
comparability of well-orderings as one sentence & \isn{oiga\_cwo\_sentence} & Combined & grounded \\
$\omega^\omega$ order; transfer; CNF order (Thm.~\ref{thm:iga-wf}) & \isn{gi\_WF\_omega\_omega}, \isn{gi\_WF\_exp\_hyp},\newline \isn{gi\_WF\_cnf} & IGA & grounded \\
the $\eps$ instance & \isn{gi\_WF\_epsilon0} & IGA & grounded \\
CNF datatype, coding, exactness & \isn{cnf}, \isn{cnf\_code}, \isn{cnf\_denote},\newline \isn{decode\_surj} & IGA & def., classical \\
codes of one form incomparable & \isn{cnf\_same\_decode\_incomparable} & IGA & classical \\
HOL's $\eps$ (Thm.~\ref{thm:eps-wf}) & \isn{epsilon0\_wf} & IGA & classical \\
Veblen order and denotation & \isn{vOp}, \isn{veb\_denote}, \isn{vCn} & IGA & def. \\
$\varphi(n,0)$; certificate (Thm.~\ref{thm:veblen}) & \isn{gi\_acc\_phi\_fin}, \isn{vOrd\_canon\_ordinal} & IGA & grounded \\
segment below $\varphi(n,0)$ in HOL & \isn{veblen\_fin\_wf} & IGA & classical \\
transitivity, irreflexivity; trichotomy fails on codes & \isn{vk\_lt\_trans}, \isn{vk\_lt\_irrefl},\newline \isn{vOrd\_tri\_fails} & IGA & grounded / classical \\
\midrule
\multicolumn{4}{@{}l}{\emph{Goodstein and the hydra} (\S\ref{sec:goodstein})}\\
index trees; run; ordinal map & \isn{prGstep}, \isn{prGrun}, \isn{prGord}, \isn{prCov} & IGA & def. \\
Goodstein's sentence; decrease lemma & \isn{gGood}, \isn{gDec} & IGA & def. \\
termination from decrease & \isn{iga\_goodstein\_of\_decrease} & IGA & grounded \\
decrease lemma derived; Goodstein (Thm.~\ref{thm:goodstein}) & \isn{gDec\_derivable}, \isn{iga\_goodstein} & IGA & grounded \\
the four strong inductions and the bound & \isn{M\_inst}, \isn{BC\_inst}, \isn{MONO\_inst},\newline \isn{VALID\_inst}; \isn{psi\_UB\_full} & IGA & grounded \\
HOL's Goodstein through soundness & \isn{goodstein\_hol\_unconditional} & IGA & classical \\
hydra coding; chop; ordinal map & \isn{hychop}, \isn{hyord}, \isn{prHyStep} & IGA & def. \\
every chop decreases (HOL) & \isn{hyord\_chop\_lt} & IGA & classical \\
generic engine & \isn{iga\_termg\_of\_decrease} & IGA & grounded \\
hydra from decrease; the decrease lemma derived; the hydra (Thm.~\ref{thm:hydra}) & \isn{iga\_hydra\_of\_decrease}, \isn{hyd\_hDec\_derivable}, \isn{iga\_hydra} & IGA & grounded \\
the internal decrease and the order theory & \isn{hyd\_Chop\_lt}, \isn{hyd\_Lt\_cnfL}, \isn{hyd\_Ins\_mono2} & IGA & grounded \\
decrease true in the omega model & \isn{omv\_hDec} & IGA & classical \\
three HOL readings & \isn{hydra\_hol}, \isn{hydra\_hol\_direct},\newline \isn{hydra\_hol\_classical} & IGA & classical \\
\midrule
\multicolumn{4}{@{}l}{\emph{Truth} (\S\ref{sec:truth})}\\
the body and the term; positive & \isn{trPhi}, \isn{trmTr}, \isn{pos\_trPhi} & IGA & def., classical \\
exactness (Thm.~\ref{thm:tr-exact}) & \isn{tr\_omv\_iff}, \isn{tr\_omv\_iff\_hd} & IGA & classical \\
the lfp; false atom not true & \isn{tr\_eq\_lfp}, \isn{tr\_bot\_false} & IGA & classical \\
\iga-truth sound; $0=1$ not provably true & \isn{gi\_tr\_sound}, \isn{gi\_not\_tr\_bot} & IGA & classical \\
clause laws; induction principle & \isn{tr\_orT1}, \isn{tr\_allT}, \isn{tr\_ind} & IGA & grounded \\
inversions; case split & \isn{trV\_or\_inv} and companions, \isn{tr\_cases} & IGA & grounded \\
refutation at $0=1$ (Thm.~\ref{thm:tr-bot}) & \isn{tr\_bot\_refuted}, \isn{gi\_tr\_bot\_refuted} & IGA & grounded \\
determinacy from the open closure obligation & \isn{trDetClosure}, \isn{gi\_tr\_consistent\_values\_of\_closure} & IGA & classical \\
\midrule
\multicolumn{4}{@{}l}{\emph{The price of the classical world} (\S\ref{sec:gamma0})}\\
bivalence of accessibility (admissibility) & \isn{AccBivR}, \isn{AccBiv} & IGA & def. \\
WF($\Gamma_0$) iff bivalence (Thm.~\ref{thm:gamma0}) & \isn{gi\_WF\_gamma0\_iff\_bivalence}, \isn{gi\_WF\_gamma0\_of\_bivalence} & IGA & classical \\
bivalence sound iff WF (Thm.~\ref{thm:biv-wf}) & \isn{accbiv\_omega\_sound\_iff\_wf} & IGA & classical \\
Veblen notation well-founded in HOL; pedigree & \isn{veb\_wf}, \isn{gamma0\_pedigree},\newline \isn{gamma0\_accbiv\_sound} & IGA & classical \\
one rule: the system and its rule (Thm.~\ref{thm:price}) & \isn{giv\_calc}, \isn{accbivS}, \isn{giv\_WF\_gamma0\_price} & Combined & def., classical \\
one claim (Thm.~\ref{thm:triangle}) & \isn{o\_wf\_of\_gamma0}, \isn{o\_wf\_of\_gamma0\_ctx} & IGA & grounded \\
the claim from WF & \isn{o\_gamma0} & IGA & grounded \\
the triangle (Thm.~\ref{thm:triangle}) & \isn{gamma0\_triangle}, \isn{gamma0\_iff\_bivalence} & IGA & classical \\
adoption: interderivable; sound; consistent & \isn{wf\_ext\_gamma0}, \isn{gamma0\_adopt\_wf},\newline \isn{gamma0\_adopt\_sound}, \isn{gamma0\_adopt\_consistent} & IGA & classical \\
Bachmann--Howard: triangle; pedigree & \isn{bh\_triangle}, \isn{bh\_pedigree} & IGA & classical \\
Curry (Thm.~\ref{thm:curry}) & \isn{curry\_explosion}, \isn{curry\_inconsistent}, \isn{ext\_curry\_explosion} & IGA & classical \\
Curry through the Liar & \isn{liar\_explosion\_cp} & IGA & classical \\
bivalence shape not admitted at the Liar (Thm.~\ref{thm:biv-liar}) & \isn{accbiv\_shape\_explodes\_on\_liar}, \isn{accbiv\_shape\_fails\_on\_liar} & IGA & classical \\
exact deduction theorem & \isn{deduction\_theorem\_exact} & RGA & classical \\
no-go: motive, residuals & \isn{xder\_not\_motive}, \isn{xder\_not\_fold\_residual} & ConOGA & classical \\
the no-go instantiated by \iga & \isn{iga\_not\_motive}, \isn{iga\_not\_fold\_residual}, \isn{iga\_not\_iResGuard} & IGA\_NoGo & classical \\
\oga contains PA & \isn{oga\_contains\_pa} & OGA & grounded \\
\bottomrule
\end{longtable}}

\end{document}